\documentclass[pdflatex,sn-basic]{sn-jnl}

\usepackage{graphicx}%
\usepackage{multirow}%
\usepackage{amsmath,amssymb,amsfonts}%
\usepackage{amsthm}%
\usepackage{mathrsfs}%
\usepackage[title]{appendix}%
\usepackage{xcolor}%
\usepackage{textcomp}%
\usepackage{manyfoot}%
\usepackage{booktabs}%
\usepackage{algorithm}%
\usepackage{algorithmicx}%
\usepackage{algpseudocode}%
\usepackage{listings}%

\renewcommand{\proofname}{\textbf{Proof}}

\newtheorem{theorem}{Theorem}[section]
\newtheorem{lemma}[theorem]{Lemma}      
\newtheorem{corollary}[theorem]{Corollary}

\newtheorem{definition}[theorem]{Definition}
\newtheorem{example}[theorem]{Example}
\newtheorem{remark}[theorem]{Remark}
\begin{document}

\title[Further results for the dual Hartwig-Spindelb{\"o}ck decomposition and its applications]{Further results for the dual Hartwig-Spindelb{\"o}ck decomposition and its applications}


\author[1]{\fnm{Tan} \sur{Mei}}\email{\url{meitan_0013@163.com}}

\author*[1]{\fnm{Kezheng} \sur{Zuo}}\email{xiangzuo28@163.com}

\author[1]{\fnm{Hui} \sur{Yan}}\email{yanhui@hbnu.edu.cn}

\affil*[1]{Department of Mathematics, School of Mathematics and Statistics, Hubei Normal University, Huangshi 435002, China}


\abstract{In this paper, we introduce two new forms of the dual Hartwig-Spindelb{\"o}ck decomposition and employ them to derive explicit representations for several classes of dual generalized inverses. 
Building on these representations, we further explore and characterize the relationships and properties of these inverses, investigate the dual composite generalized inverses, and verify the applicability of dual partial orders.
The proposed decomposition provides a systematic and convenient framework for the study of dual matrices.}

\keywords{Dual Hartwig-Spindelb{\"o}ck decomposition, DMPGI, DGGI, DCGI, Dual partial orders }



\maketitle

\section{Introduction}

In 1873, Clifford (\citeyear{Clifford}) introduced dual numbers, dual complex numbers and dual quaternions to describe spiral motion, particularly rigid body motion, using quantities with size, direction and position. 
Notably, many problems can first be formulated for spherical motion and then extended to spatial motion through dualization. 
As a result, dual matrices have become an important tool in areas such as robotics and motion control (Chen et al. \citeyear{Chen}; Daniilidis \citeyear{Daniilidis}; Gu and Luh \citeyear{Gu}; Wang et al. \citeyear{Wang4}; Xie et al. \citeyear{Xie}), as well as kinematic analysis and synthesis of spatial mechanisms (de et al. \citeyear{de}; Pennestr{\`\i} et al. \citeyear{Pennestr2}; Udwadia et al. \citeyear{Udwadia1}; Yang and Wang \citeyear{Yang}).

\noindent To better support the theoretical underpinnings of these applications, a deeper understanding of dual matrices is required. 
Motivated by this need, Be and Mishra (\citeyear{Be}) extended the Hartwig-Spindelb{\"o}ck decomposition (H-S decomposition) to the dual number matrices, establishing necessary and sufficient conditions for this decomposition for several classes of matrices.
In this paper, we further present two new forms of this decomposition and derive explicit expressions for various dual generalized inverses, thereby facilitating the study of dual matrices.
Originally introduced by Hartwig and Spindelb{\"o}ck (\citeyear{Hartwig1}) for complex matrices, this decomposition constitutes a powerful tool in matrix analysis (Aghamollaei et al. \citeyear{Aghamollaei}; Baksalary et al. \citeyear{Baksalary1}; Ferreyra and Malik \citeyear{Ferreyra}; Hern{\'a}ndez et al. \citeyear{Hern}; Zuo et al. \citeyear{Zuo}).

\noindent Let us now recall some notations.
\noindent  Dual complex number \(\hat{a} = a_s + \epsilon a_d\) consists of two complex numbers \(a_s\) and \(a_d\), along with the infinitesimal unit \(\epsilon\). Here, \(a_s, a_d \) are called the standard part and the infinitesimal part of \(\hat{a}\), respectively. The unit $\epsilon$ satisfies
$\epsilon \neq 0$, $\epsilon^2 = 0$, $ 0\epsilon = \epsilon 0 = 0$ and $1\epsilon = \epsilon 1 = \epsilon$.
Similarly, with  \(a_s\) and \(a_d\) being real numbers, $\hat{a}$ is a dual real number, i.e., a dual number.
If \( a_s \neq 0 \), then \( \hat{a} \) is called appreciable. It can be shown that \( \hat{a} \) is invertible if and only if it is appreciable, in which case its inverse is given by  $ \hat{a}^{-1}=a_s^{-1}-\epsilon a_s^{-2}a_d$.
For dual numbers \( \hat{a}, \hat{b} \) with  $\hat{a}= a_s + \epsilon a_d, \quad \hat{b} = b_s + \epsilon b_d$, 
where \( a_s, a_d, b_s, b_d \) are real numbers, we define \( \hat{a} \leq \hat{b} \) if either \( a_s < b_s \), or \( a_s = b_s \) and \( a_d \leq b_d \).  
Specifically, \( \hat{a} \) is called positive, nonnegative, nonpositive, or negative if \( \hat{a} > 0 \), \( \hat{a} \geq 0 \), \( \hat{a} \leq 0 \), or \( \hat{a} < 0 \), respectively.
Let \( \mathbb{DC} \), \( \mathbb{C}^{m \times n} \) and \( \mathbb{DC}^{m \times n} \) be the sets of dual complex numbers, \( m \times n \) complex matrices and \( m \times n \) dual complex matrices, respectively. 
Since every entry of a dual complex matrix is a dual complex number, any matrix \( \hat{A} \in \mathbb{DC}^{m \times n} \) can be expressed in the form
\[
\hat{A} = A_s + \epsilon A_d, 
\]
in which $A_{s}, A_{d} \in \mathbb{C}^{m \times n}$ are called the standard part and infinitesimal part of $\hat{A}$, respectively.
If \( A_s \neq 0 \), \( \hat{A} \) is called appreciable.  \( \hat{A} \) is invertible if and only if $ A_s$ is invertible, with inverse  
$\hat{A}^{-1} = A_s^{-1} - \epsilon A_s^{-1} A_d A_s^{-1}$. 
The conjugate transpose of \( \hat{A} \) is  
$\hat{A}^* = A_s^* + \epsilon A_d^*$. 
For \( m = n \), \( \hat{A} \) is a dual unitary  complex matrix if \( \hat{A}^{-1} = \hat{A}^* \).  
For \( \hat{A}, \hat{B} \in \mathbb{DC}^{m \times n} \) with  $\hat{A} = A_s + \epsilon A_d$, $\hat{B} = B_s + \epsilon B_d$, 
where \( A_s, A_d, B_s, B_d  \in \mathbb{C}^{m \times n}\), , we define \( \hat{A} = \hat{B} \) if  \( A_s = B_s \) and \( A_d = B_d \).  
The set of all \( n \times n \)  dual unitary complex matrices is denoted by \( \mathbb{DC}_n^{U} \), and \( I_n \) represents the \( n \times n \) identity matrix.
Moreover, we denote  the appreciable index by $\operatorname{AInd}(\hat{A})$, where $\operatorname{AInd}(\hat{A})=\operatorname{Ind}(A_{s})$.

\noindent  The main purpose of this paper is to present further results for the dual Hartwig-Spindelb{\"o}ck decomposition (D-H-S decomposition) and its applications. Our main contributions can be summarized as follows:
\begin{itemize}
\item We present two new forms of the D-H-S decomposition for dual matrices.
\item We derive explicit expressions for multiple dual generalized inverses through the D-H-S decomposition framework.
\item Based on the D-H-S decomposition, we explore the relationships among various dual generalized inverses, examine composite generalized inverses of dual matrices, and analyze dual partial orders.
\end{itemize}

\noindent The paper is organized as follows.\\
In Section \ref{2}, we provide the necessary lemmas, the definitions of various dual generalized inverses, and the necessary and sufficient conditions for their existence.
In Section \ref{3}, we study two new forms of the D-H-S decomposition for dual matrices. 
In Section \ref{4}, we utilizes the D-H-S decomposition to derive expressions for several dual generalized inverses, examine their interrelations and fundamental properties, and explore their composite forms and applicability to dual partial orders.
Finally, concluding remarks are given in Section \ref{5}.

\section{Preparation}\label{2}
In this section, we will recall some useful results for studying the D-H-S decomposition. 
We begin with the dual singular value decomposition (dual-SVD) for dual matrices, which was given in (Qi and Luo \citeyear{Qi1}).

\begin{lemma}\label{L0}\emph{(Qi and Luo \citeyear{Qi1})}
  Let $\hat{A} \in \mathbb{DC}^{m \times n}$. 
  Then there exist $\hat{U} \in \mathbb{DC}_m^{U}$ and $\hat{V} \in \mathbb{DC}_n^{U}$ such that
\begin{align}\label{A1}
\hat{A} = \hat{U}\begin{bmatrix} \Sigma_0 & 0  \\ 0 & 0 \end{bmatrix}\hat{V}^*,
\end{align}
where 
$$ \Sigma_0 = \mathrm{diag} (\mu_1, \ldots, \mu_r, \mu_{r+1}, \cdots, \mu_{t}),$$ 
$\mu_{1} \geq \mu_{2} \geq \cdots \geq \mu_{r}$ are positive appreciable dual numbers, and $\mu_{r+1} \geq \mu_{r+2} \geq \cdots \geq \mu_{t}$ are positive infinitesimal dual numbers.
Counting possible multiplicities of the diagonal entries, the form $\Sigma_0$ is unique.
\end{lemma}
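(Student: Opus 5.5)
We prove Lemma \ref{L0} by reducing to the classical SVD of $A_s$ and then using unitary freedom to push the infinitesimal part $A_d$ into a normal form. Write $\hat A=A_s+\epsilon A_d$, let $r=\operatorname{rank}(A_s)$, and take a complex SVD
\[
A_s=U_s\begin{bmatrix}\Sigma&0\\0&0\end{bmatrix}V_s^*,
\]
grouping the distinct singular values $\sigma_1>\cdots>\sigma_k>0$ with multiplicities $r_1,\dots,r_k$. We look for $\hat U=U_s(I+\epsilon P)$ and $\hat V=V_s(I+\epsilon Q)$. Since
\[
(I+\epsilon P)^*(I+\epsilon P)=I+\epsilon(P+P^*),
\]
these are dual unitary exactly when $P$ and $Q$ are skew-Hermitian. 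Set $B=U_s^*A_dV_s$, partitioned conformally with the blocks of $\Sigma$ and with the zero block. Then
\[
\hat U^*\hat A\hat V=\begin{bmatrix}\Sigma&0\\0&0\end{bmatrix}+\epsilon\left(B-P\begin{bmatrix}\Sigma&0\\0&0\end{bmatrix}+\begin{bmatrix}\Sigma&0\\0&0\end{bmatrix}Q\right).
\]
The task is therefore to choose skew-Hermitian $P,Q$ so that the $\epsilon$-part becomes diagonal, up to one further unitary change within each multiplicity block.

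We carry this out block by block.

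\textbf{Off-diagonal blocks within the appreciable part.} For $i\neq j$, the $(i,j)$ $\epsilon$-block is $B_{ij}-\sigma_jP_{ij}+\sigma_iQ_{ij}$. Skew-Hermitianity ties $(i,j)$ to $(j,i)$, so we must solve
\[
\sigma_jP_{ij}-\sigma_iQ_{ij}=B_{ij},\qquad -\sigma_iP_{ij}+\sigma_jQ_{ij}=B_{ji}^*.
\]
The coefficient determinant is $\sigma_j^2-\sigma_i^2\neq0$, so the system is solvable.

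\textbf{Mixed blocks.} The $(i,0)$ block reads $B_{i0}+\sigma_iQ_{i0}$, which we kill with $Q_{i0}=-\sigma_i^{-1}B_{i0}$. The $(0,i)$ block is killed symmetrically via $P_{0i}$.

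\textbf{Diagonal blocks $(i,i)$.} Here the $\epsilon$-part is $B_{ii}-\sigma_i(P_{ii}-Q_{ii})$. Choose $P_{ii}-Q_{ii}$ to remove the skew-Hermitian part of $B_{ii}$, leaving $\tfrac12(B_{ii}+B_{ii}^*)$. This is Hermitian, so a unitary $W_i$ diagonalises it, and we apply $W_i$ to both sides. This keeps $\sigma_iI$ unchanged and makes the block $\sigma_iI+\epsilon D_i$ with $D_i$ real diagonal. Ordering the entries of $D_i$ decreasingly gives the dual numbers $\mu_1\ge\cdots\ge\mu_r$ in the lexicographic order.

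\textbf{The $(0,0)$ block.} This block is just $B_{00}$, untouched by $P$ and $Q$ to first order. Take an SVD $B_{00}=X\,\mathrm{diag}(\tau,0)Y^*$ and absorb $X$ and $Y$ into the trailing blocks of $U_s$ and $V_s$. This produces the infinitesimal values $\epsilon\tau_j$, which are positive infinitesimal dual numbers ordered decreasingly.

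\textbf{Uniqueness.} The standard parts are determined by $A_s$. Within each $\sigma_i$-block, the $D_i$ are the eigenvalues of the Hermitian part of the compressed $B_{ii}$, and this is invariant under the residual freedom: unitaries within singular subspaces, plus skew-Hermitian first-order corrections. The $\tau_j$ are the singular values of $B_{00}$, which is determined up to unitary equivalence by the null spaces of $A_s$ and $A_s^*$.

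The main obstacle is the bookkeeping that $P$ and $Q$ are genuinely consistent skew-Hermitian matrices. The off-diagonal equations must be solved pairwise, and the diagonal blocks only require a constraint on $P_{ii}-Q_{ii}$, which can be met with skew-Hermitian choices such as $Q_{ii}=0$. I expect the uniqueness verification to be the more delicate part, and I would argue it via these invariants.
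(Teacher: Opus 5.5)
Your proof is correct. The paper gives no argument for Lemma~\ref{L0}; it is simply quoted from Qi and Luo. Your construction is therefore best read as a self-contained proof rather than a variant of an in-paper one.

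The steps check out. Every dual unitary matrix has the form $U_s(I+\epsilon P)$ with $P$ skew-Hermitian (take $P=U_s^*U_d$), so your ansatz loses nothing. The paired systems for $(P_{ij},Q_{ij})$ have determinant $\sigma_j^2-\sigma_i^2\neq 0$. The blocks $P_{i0}=-P_{0i}^*$ and $Q_{0i}=-Q_{i0}^*$ forced by skew-Hermitianity only ever multiply zero blocks of the standard part, so the mixed equations do not conflict. Finally, the $(0,0)$ block of the $\epsilon$-part is $U_{s,0}^*A_dV_{s,0}$ whatever first-order corrections are used.

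Consider the alternative route through the unitary diagonalization of the dual Hermitian matrix $\hat A^*\hat A$. That route is blind to the infinitesimal singular values, since $(\epsilon\tau)^2=0$, and so needs a separate step for them. Your block computation handles both kinds at once and identifies them explicitly:
the appreciable singular values are $\sigma_i+\epsilon\lambda$, with $\lambda$ ranging over the eigenvalues of $\tfrac12(U_{s,i}^*A_dV_{s,i}+V_{s,i}^*A_d^*U_{s,i})$;
the infinitesimal ones are $\epsilon\tau$, with $\tau$ the nonzero singular values of $U_{s,0}^*A_dV_{s,0}$.
As a bonus, $t-r=\operatorname{rank}\bigl((I_m-A_sA_s^\dagger)A_d(I_n-A_s^\dagger A_s)\bigr)$. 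This gives the equivalence of (b) and (e) in Lemma~\ref{TJ1} directly.

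In the uniqueness part, make explicit the one point your sketch glosses over. In any decomposition of the stated form, the standard parts must be $U_s\,\mathrm{diag}(W_1,\dots,W_k,X)$ and $V_s\,\mathrm{diag}(W_1,\dots,W_k,Y)$ with the \emph{same} $W_i$ on both sides. This holds because $A_sV_{s,i}=\sigma_iU_{s,i}$ with $\sigma_i>0$ couples them. Hence the compressed block changes by the unitary similarity $W_i^*B_{ii}W_i$, not by an arbitrary two-sided unitary equivalence, which would not preserve the spectrum of the Hermitian part.

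Since the $\mu_j$ are real dual numbers, the $(i,i)$ block $W_i^*B_{ii}W_i+\sigma_i(Q'_{ii}-P'_{ii})$ of the $\epsilon$-part is real diagonal. It therefore equals its own Hermitian part, and because $\sigma_i(Q'_{ii}-P'_{ii})$ is skew-Hermitian, that is the Hermitian part of $W_i^*B_{ii}W_i$. With this in place, your invariants determine $\Sigma_0$.
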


\begin{remark}\label{RE}
  For the convenience of subsequent computations, $\Sigma_0$  from \emph{Lemma~\ref{L0}} is partitioned as follows:
   \begin{align}\label{MM}
    \begin{bmatrix} \Sigma_0 & 0  \\ 0 & 0 \end{bmatrix} =\begin{bmatrix} \Sigma_1 & 0  \\ 0 & \Sigma_2 \end{bmatrix} = \begin{bmatrix} \Sigma_{\mathrm{1s}} & 0 \\ 0 & 0 \end{bmatrix} + \epsilon\begin{bmatrix} \Sigma_{\mathrm{1d}} & 0 \\ 0 & \Sigma_{\mathrm{2d}} \end{bmatrix}, 
  \end{align}
  with
  \begin{align*}
  &\Sigma_1 = \Sigma_{\mathrm{1s}} +\epsilon \Sigma_{\mathrm{1d}} = \operatorname{diag}(\mu_1, \ldots, \mu_r)\ \  \text{and} \ \
  \Sigma_2 = \epsilon \Sigma_{\mathrm{2d}}= \operatorname{diag}(\mu_{r+1}, \ldots, \mu_{t}, 0, \ldots, 0)\epsilon. 
  \end{align*}
  Here,
  $\Sigma_{\mathrm{1s}}$ and $\Sigma_{\mathrm{1d}}$ are the standard part and infinitesimal part of $\Sigma_1$, $\Sigma_{\mathrm{2d}}$ is the infinitesimal part of $\Sigma_2$, 
  while $\mu_1, \cdots, \mu_r$ are positive appreciable dual numbers and $\mu_{r+1},\cdots,\mu_t$ are positive infinitesimal dual numbers.
\end{remark}

\begin{remark}
 Let $\hat{A}=A_s + \epsilon A_d$. Based on the dual-SVD presented in \emph{Lemma~\ref{L0}}, Cui and Qi \emph{(\citeyear{Cui1})} introduce the dual rank of \(\hat{A}\) as \(\operatorname{Rank}(\hat{A})\) where \(t = \operatorname{Rank}(\hat{A})\), 
 and they further establish its appreciable rank as \(\operatorname{ARank}(\hat{A})\) satisfying \(r = \operatorname{ARank}(\hat{A}) = \operatorname{Rank}(A_s)\).
\end{remark}

\noindent  The essential part of $\hat{A}$ is derived using its dual-SVD.

\begin{definition}\label{L77}\emph{(Cui and Qi \citeyear{Cui1})}
  For $\hat{A} \in \mathbb{DC}^{m \times n}$ with $r = \operatorname{Rank}(\hat{A})$, let its dual-SVD be given as in \emph{Lemma~\ref{L0}} for $\hat{U} \in \mathbb{DC}_m^{U}$ and $\hat{V} \in \mathbb{DC}_n^{U}$, such that
  \begin{align}\label{Evn}
  \hat{A}_e=\hat{U}\begin{bmatrix}
    \Sigma_1 & 0  \\ 0 & 0 
  \end{bmatrix}\hat{V}^*,
   \end{align}
and $\Sigma_1 = \Sigma_{\mathrm{1s}} + \epsilon \Sigma_{\mathrm{1d}} = \operatorname{diag}(\mu_1, \ldots, \mu_r)$ with $\mu_1 \geq \cdots \geq \mu_r$ being positive appreciable dual numbers.
\end{definition}

\noindent We now recall several dual generalized inverses.
\noindent Similar to complex matrices, the definition of the dual Moore-Penrose generalized inverse of a dual matrix is given as follows.
\noindent For $\hat{A} \in \mathbb{DC}^{m \times n}$ and $\hat{X} \in \mathbb{DC}^{n \times m}$, if the two dual matrices satisfy
\begin{align}\label{AN}
\hat{A}\hat{X}\hat{A} = \hat{A}, \quad \hat{X}\hat{A}\hat{X} = \hat{X}, \quad ( \hat{A}\hat{X} )^* = \hat{A}\hat{X}, \quad  ( \hat{X}\hat{A} )^* = \hat{X}\hat{A},
\end{align}
then $\hat{X}$ is called the dual Moore-Penrose generalized inverse (DMPGI) of $\hat{A}$ (Udwadia et al. \citeyear{Udwadia1}), denoted by $\hat{X} = \hat{A}^{\dagger}$.
However, the DMPGI does not exist for all dual matrices, which motivates the need to characterize its existence. 
Wang (\citeyear{Wang2}) established necessary and sufficient conditions for the DMPGI in 2021. 
Subsequently, in 2025, Cui and Qi (\citeyear{Cui1}) provided another characterizations based on dual-SVD.
\begin{lemma}\emph{(Cui and Qi \citeyear{Cui1}; Wang \citeyear{Wang2})} \label{TJ1}
  Let \(\hat{A} = A_s + \epsilon A_d \in \mathbb{DC}^{m \times n}\). Then, the following conditions are equivalent:

\noindent $(a)$ The DMPGI \(\hat{A}^\dagger\) of \(\hat{A}\) exists;

\noindent $(b)$ \((I_m - A_sA_s^\dagger)A_d(I_n - A_s^\dagger A_s) = 0\);

\noindent $(c)$ \(\operatorname{Rank}\begin{bmatrix} A_d & A_s \\ A_s & 0 \end{bmatrix} = 2 \, \operatorname{Rank}(A_s)\);

\noindent $(d)$ $\hat{A}=\hat{A}_e$;

\noindent $(e)$ $\operatorname{ARank}(\hat{A})=\operatorname{Rank}(\hat{A})$.

\noindent  Furthermore, when the DMPGI \(\hat{A}^\dagger\) of \(\hat{A}\) exists,
$$
\hat{A}^\dagger = A_s^\dagger - \epsilon R, 
$$
where \\
$$R = A_s^\dagger A_d A_s^\dagger - (A_s^* A_s)^\dagger A_d^* (I_m - A_s A_s^\dagger) - (I_n - A_s^\dagger A_s) A_d^* (A_s A_s^*)^\dagger.$$
\end{lemma}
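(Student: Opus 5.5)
The plan is to reduce everything to equations in the standard and infinitesimal parts. Writing $\hat{X} = X_s + \epsilon X_d$ and using $\epsilon^2=0$, the four Penrose equations \eqref{AN} split into a standard system and an infinitesimal system. The standard parts give the Penrose equations for $A_s$, so necessarily $X_s = A_s^\dagger$. The infinitesimal parts give
\begin{align*}
&A_s X_d A_s + A_s A_s^\dagger A_d + A_d A_s^\dagger A_s = A_d,\\
&X_d A_s A_s^\dagger + A_s^\dagger A_s X_d + A_s^\dagger A_d A_s^\dagger = X_d,
\end{align*}
together with Hermitian conditions on $A_s X_d + A_d A_s^\dagger$ and $X_d A_s + A_s^\dagger A_d$. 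This proves the last formula and (a)$\Leftrightarrow$(b) together.

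\textbf{(a)$\Leftrightarrow$(b).} I will multiply the first infinitesimal equation on the left by $I_m - A_sA_s^\dagger$ and on the right by $I_n - A_s^\dagger A_s$. Every term on the left side is then annihilated, so (b) is necessary. For sufficiency, I assume (b) and check directly that $X_d = -R$ satisfies all four infinitesimal equations, using the Penrose identities for $A_s$ together with $(A_s^*A_s)^\dagger A_s^* = A_s^\dagger$ and $A_s^*(A_sA_s^*)^\dagger = A_s^\dagger$. In this check, (b) is used exactly once, to cancel the term $(I-A_sA_s^\dagger)A_d(I-A_s^\dagger A_s)$ in the first equation. This also establishes the displayed formula for $\hat{A}^\dagger$, since $\hat{A}^\dagger$ is unique.

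\textbf{(b)$\Leftrightarrow$(c).} I will take $U, V$ unitary with $A_s = U\begin{bmatrix} D & 0\\ 0 & 0\end{bmatrix}V^*$, $D$ invertible of size $r=\operatorname{Rank}(A_s)$, and partition $U^*A_dV = \begin{bmatrix} B_{11} & B_{12}\\ B_{21} & B_{22}\end{bmatrix}$. In these coordinates, (b) says $B_{22}=0$. Block elimination using the invertible $D$ in the block matrix of (c) removes $B_{11}$, $B_{12}$ and $B_{21}$, giving rank $2r + \operatorname{Rank}(B_{22})$. Hence (c) holds if and only if $B_{22}=0$.

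\textbf{(d)$\Leftrightarrow$(e)$\Leftrightarrow$(a).} By Remark~\ref{RE}, $\hat{A} - \hat{A}_e = \hat{U}\operatorname{diag}(0,\Sigma_2)\hat{V}^*$. This vanishes if and only if $\Sigma_2=0$, that is, if and only if $t=r$, which gives (d)$\Leftrightarrow$(e). If (d) holds, then $\hat{V}\begin{bmatrix}\Sigma_1^{-1}&0\\0&0\end{bmatrix}\hat{U}^*$ satisfies \eqref{AN}, because $\Sigma_1$ is invertible (its entries are appreciable) and $\hat{U},\hat{V}$ are dual unitary. Conversely, if (a) holds but $\Sigma_2\neq 0$, I pass to $\hat{U}^*\hat{A}\hat{V} = \operatorname{diag}(\Sigma_1,\epsilon\Sigma_{2d})$; unitary equivalence preserves existence of the DMPGI. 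Criterion (b) applied to this diagonal matrix requires $\Sigma_{2d}=0$, a contradiction.

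The main obstacle I expect is the sufficiency verification that $-R$ solves the infinitesimal equations, in particular the Hermitian conditions. My approach there is to work in the unitary coordinates above. There $R$ becomes explicit in the blocks: roughly $D^{-1}B_{11}D^{-1}$ in the top-left block, $-(D^*D)^{-1}B_{21}^*$ in the top-right, $-B_{12}^*(DD^*)^{-1}$ in the bottom-left, and $0$ in the bottom-right. With $R$ in this form, each infinitesimal equation reduces to a short block identity.
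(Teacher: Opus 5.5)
The paper gives no proof of this lemma. It is quoted from Cui and Qi and from Wang and used as a black box, so there is no in-paper argument to compare yours against. Your proposal is a correct, self-contained proof, and its route is the standard one: split into standard and infinitesimal parts for (a)–(c) and the formula, and use the dual SVD for (d)–(e). I checked each piece.

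\textbf{(a)$\Leftrightarrow$(b) and the formula.} Splitting the Penrose equations forces $X_s=A_s^\dagger$ and yields exactly the four infinitesimal conditions you list. Sandwiching the first condition between $I_m-A_sA_s^\dagger$ and $I_n-A_s^\dagger A_s$ gives (b). In the SVD coordinates of $A_s$, your block form of $R$ is right. With $X_d=-R$ the four conditions reduce to:
\begin{itemize}
\item $\begin{bmatrix} B_{11}&B_{12}\\ B_{21}&0\end{bmatrix}=U^*A_dV$ for the first equation;
\item an identity for the second equation;
\item the Hermitian blocks $\begin{bmatrix}0&(D^*)^{-1}B_{21}^*\\ B_{21}D^{-1}&0\end{bmatrix}$ and $\begin{bmatrix}0&D^{-1}B_{12}\\ B_{12}^*(D^*)^{-1}&0\end{bmatrix}$ for the two Hermitian conditions.
\end{itemize}
So (b) enters only through $B_{22}=0$, exactly as you claim.

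\textbf{(b)$\Leftrightarrow$(c).} The block elimination correctly gives rank $2r+\operatorname{Rank}(B_{22})$.

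\textbf{(d), (e).} The dual-SVD argument is sound. The condition $\Sigma_2=0$ is equivalent to $t=r$ because positive infinitesimal singular values have nonzero infinitesimal part. This criterion is also independent of which dual SVD is used to define $\hat{A}_e$, since $\Sigma_0$ is unique.

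A good feature of your (a)$\Rightarrow$(d) step is what it transports. You move only the existence of a DMPGI, which is obviously preserved under $\hat{A}\mapsto\hat{U}^*\hat{A}\hat{V}$ because $(\hat{X}\hat{Y})^*=\hat{Y}^*\hat{X}^*$ for dual matrices. You then apply the already-proved (a)$\Rightarrow$(b) to the diagonal form. So you never need to check directly that condition (b) survives a dual unitary equivalence, even though the infinitesimal part of $\hat{U}$ feeds $A_s$ into the infinitesimal part of the transformed matrix.

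You should state one thing explicitly: the uniqueness of the DMPGI, which you invoke to identify $A_s^\dagger-\epsilon R$ with $\hat{A}^\dagger$. It follows from the usual Moore–Penrose uniqueness argument, which works verbatim in any ring with involution and hence for dual matrices.
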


\noindent In 2023, Li and Wang (\citeyear{Li1}) introduced the weak dual generalized inverse (WDGI) for $\hat{A} \in \mathbb{DC}^{m \times n}$, defined as the unique dual matrix  $\hat{X} \in \mathbb{DC}^{n \times m}$ satisfying
\begin{align*}
\hat{A}^* \hat{A}\hat{X} \hat{A}\hat{A}^* = \hat{A}^*\hat{A}\hat{A}^*, \quad \hat{X}\hat{A}\hat{X} = \hat{X}, \quad ( \hat{A}\hat{X} )^* = \hat{A}\hat{X}, \quad ( \hat{X}\hat{A} )^* = \hat{X}\hat{A}.
\end{align*}
 However, in 2025, Cui and Qi (\citeyear{Cui1}) introduced the essential part \( \hat{A}_e \) of \( \hat{A} \), whose nonzero singular values correspond to the appreciable singular values of \( \hat{A} \). 
 They subsequently replaced the first equation in (\ref{AN}) with
\[
\hat{A} \hat{X} \hat{A} = \hat{A}_e,
\]
 and defined the unique dual matrix $\hat{X} \in \mathbb{DC}^{n \times m}$ satisfying
\begin{align}\label{E1}
\hat{A} \hat{X} \hat{A} = \hat{A}_e, \quad \hat{X}\hat{A}\hat{X} = \hat{X}, \quad ( \hat{A}\hat{X} )^* = \hat{A}\hat{X}, \quad ( \hat{X}\hat{A} )^* = \hat{X}\hat{A}, 
\end{align}
as the new dual Moore-Penrose inverse (NDMPI) of $\hat{A} \in \mathbb{DC}^{m \times n}$ (Cui and Qi \citeyear{Cui1}), denoted $\hat{A}^N$.
Notably, the NDMPI is equivalent to the WDGI, and Cui and Qi also derive its dual-SVD.

\begin{lemma}\label{LM1}\emph{(Cui and Qi \citeyear{Cui1})}
Let $\hat{A} \in \mathbb{DC}^{m \times n}$ with $r = \operatorname{Rank}(\hat{A})$. Suppose its dual \emph{SVD} follows from \emph{Lemma~\ref{L0}}, then 
\begin{align}\label{NSVD}
  \hat{A}^N=\hat{V}\begin{bmatrix} \Sigma_1^{-1} & 0 \\ 0 & 0 \end{bmatrix}\hat{U}^*,
\end{align}
where $\Sigma_1 = \Sigma_{\mathrm{1s}} + \epsilon \Sigma_{\mathrm{1d}} = \operatorname{diag}(\mu_1, \ldots, \mu_r)$ with $\mu_1 \geq \cdots \geq \mu_r$ being positive appreciable dual numbers.
\end{lemma}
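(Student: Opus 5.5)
The plan is to check directly that the matrix $\hat{X}:=\hat{V}\begin{bmatrix}\Sigma_1^{-1}&0\\0&0\end{bmatrix}\hat{U}^*$ satisfies the four defining equations (\ref{E1}), and then to show that the NDMPI is unique. Together these give the formula (\ref{NSVD}). Throughout I use the dual-SVD of Lemma~\ref{L0} in the partitioned form (\ref{MM}) of Remark~\ref{RE}:
\[
\hat{A}=\hat{U}\begin{bmatrix}\Sigma_1&0\\0&\Sigma_2\end{bmatrix}\hat{V}^*,\qquad \Sigma_2=\epsilon\Sigma_{\mathrm{2d}}.
\]
Two facts are used repeatedly. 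First, $\Sigma_1$ is invertible, since its diagonal entries $\mu_1,\dots,\mu_r$ are appreciable dual numbers. Second, $\hat{U}^*\hat{U}=I_m$ and $\hat{V}^*\hat{V}=I_n$.

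For the verification, write $D=\operatorname{diag}(\Sigma_1,\Sigma_2)$ and $P=\operatorname{diag}(\Sigma_1^{-1},0)$. Then
\[
\hat{A}\hat{X}=\hat{U}DP\hat{U}^*=\hat{U}\operatorname{diag}(I_r,0)\hat{U}^*,\qquad \hat{X}\hat{A}=\hat{V}PD\hat{V}^*=\hat{V}\operatorname{diag}(I_r,0)\hat{V}^*.
\]
The off-diagonal blocks vanish, so $\Sigma_2$ plays no role in these products. Both products are visibly Hermitian. Next,
\[
\hat{A}\hat{X}\hat{A}=\hat{U}\operatorname{diag}(I_r,0)D\hat{V}^*=\hat{U}\operatorname{diag}(\Sigma_1,0)\hat{V}^*=\hat{A}_e
\]
by Definition~\ref{L77}, and $\hat{X}\hat{A}\hat{X}=\hat{V}\operatorname{diag}(I_r,0)P\hat{U}^*=\hat{X}$. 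This settles existence.

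For uniqueness, I reduce to the $\epsilon$-free setting, where the Penrose equations already have a unique solution. Suppose $\hat{Y}$ also satisfies (\ref{E1}), and write $\hat{V}^*\hat{Y}\hat{U}=\begin{bmatrix}Y_{11}&Y_{12}\\Y_{21}&Y_{22}\end{bmatrix}$ in dual blocks. Conjugating the four equations by the dual unitary matrices turns them into the same equations for $D$ and this block matrix.
\begin{itemize}
\item The first equation gives $\Sigma_1Y_{11}\Sigma_1=\Sigma_1$, hence $Y_{11}=\Sigma_1^{-1}$. It also gives $\Sigma_1Y_{12}\Sigma_2=0$, $\Sigma_2Y_{21}\Sigma_1=0$ and $\Sigma_2Y_{22}\Sigma_2=0$.
\item Hermitian-ness of $DY$ forces its $(1,2)$ block $\Sigma_1Y_{12}$ to equal $(\Sigma_2Y_{21})^*$. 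Similarly, Hermitian-ness of $YD$ links $Y_{21}\Sigma_1$ to $(Y_{12}\Sigma_2)^*$.
\item The equation $YDY=Y$ controls the remaining blocks.
\end{itemize}

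The main obstacle is this uniqueness step, because $\Sigma_2$ is nilpotent and cannot be cancelled. I would first try to show $Y_{12}=0$, $Y_{21}=0$ and $Y_{22}=0$ by splitting each dual block into standard and infinitesimal parts and comparing coefficients of $1$ and $\epsilon$.
\begin{itemize}
\item Standard parts: the relations collapse to the classical Penrose equations for $\operatorname{diag}(\Sigma_{1s},0)$. This pins down the standard part of $Y$ as $\operatorname{diag}(\Sigma_{1s}^{-1},0)$, so the standard parts of $Y_{12}$, $Y_{21}$ and $Y_{22}$ all vanish.
\item Infinitesimal parts: every term involving $\Sigma_2=\epsilon\Sigma_{\mathrm{2d}}$ now gets multiplied by a block whose standard part is zero, so it vanishes modulo $\epsilon^2$. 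The $\epsilon$-coefficients of the Hermitian conditions then give $\Sigma_{1s}(Y_{12})_d=0$ and $(Y_{21})_d\Sigma_{1s}=0$, so $(Y_{12})_d=0$ and $(Y_{21})_d=0$.
\item The $(2,2)$ block of $YDY=Y$ then gives $(Y_{22})_d=0$.
\end{itemize}
Hence $\hat{Y}=\hat{X}$. Alternatively, if the equivalence with the WDGI of Li and Wang is taken as given, uniqueness can simply be cited from that source, and only the verification above is needed.
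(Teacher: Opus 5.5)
Your proof is correct. The paper gives no argument of its own for this lemma: it is quoted from Cui and Qi, with only the remark that the NDMPI coincides with the (unique) WDGI of Li and Wang. So your proposal is a self-contained substitute rather than a variant of an in-paper proof.

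Your existence half is the same verify-by-multiplication pattern the paper uses later for the DGGI in Theorem~\ref{tL1}, where uniqueness is then simply cited. Your uniqueness half goes further, and each step is sound:
\begin{itemize}
\item After conjugating, you have $DYD=\operatorname{diag}(\Sigma_1,0)$, $YDY=Y$, $(DY)^*=DY$ and $(YD)^*=YD$.
\item The standard parts force $Y_s=\operatorname{diag}(\Sigma_{\mathrm{1s}}^{-1},0)$.
\item After that, every occurrence of $\Sigma_2=\epsilon\Sigma_{\mathrm{2d}}$ is multiplied by a block that is already purely infinitesimal. So the $\epsilon$-coefficients of the off-diagonal Hermitian conditions give $(Y_{12})_d=(Y_{21})_d=0$.
\item The $(2,2)$ block of $YDY=Y$ then gives $Y_{22}=0$.
\end{itemize}
This buys you independence from the WDGI literature and shows explicitly why the nilpotent block $\Sigma_2$ cannot influence $\hat{A}^N$. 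Citing uniqueness is shorter but hides that mechanism.

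Two small points of reading. First, $r$ in the statement has to be the appreciable rank $\operatorname{ARank}(\hat{A})$, as in Remark~\ref{RE}, not $\operatorname{Rank}(\hat{A})$; otherwise $\Sigma_1$ would contain infinitesimal entries and would not be invertible. Your use of the partition (\ref{MM}) is the right interpretation.

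Second, you take $\hat{A}_e$ from the same dual-SVD as $\hat{U},\hat{V}$, which is exactly what Definition~\ref{L77} does. If you also want the conclusion to be independent of the chosen dual-SVD, note that $\hat{A}-\hat{A}_e=\epsilon U_s\operatorname{diag}(0,\Sigma_{\mathrm{2d}})V_s^*=\epsilon(I_m-A_sA_s^\dagger)A_d(I_n-A_s^\dagger A_s)$. This depends only on $\hat{A}$.
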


\noindent Wang and Gao (\citeyear{Wang1}) defined a dual matrix with a dual index of 1 (i.e., $\operatorname{Ind}(\hat{A})=1$) as one which satisfies 
$$ \mathcal{R} (\hat{A}^{2})=\mathcal{R} (\hat{A}).$$

\noindent Additional symbols used in this paper are defined by (Wang et al.\citeyear{Wang3}) as the set  $\mathbb{DC}_n^{\mathrm{CM}}$:
\begin{eqnarray*}
\mathbb{DC}_n^{\mathrm{CM}} = \left\{ \hat{A} \in \mathbb{DC}^{n \times n} \middle| \mathcal{R} (\hat{A}^{2})=\mathcal{R} (\hat{A}) \right\} = \left\{ \hat{A} \in \mathbb{DC}^{n \times n} \middle| \operatorname{Ind}(\hat{A})=1\right\}.
\end{eqnarray*}

\noindent In 2022,  the dual group inverse (DGGI) of $\hat{A} \in \mathbb{DC}_n^{\mathrm{CM}}$ (Zhong and Zhang \citeyear{Zhong1}), denoted $\hat{X} = \hat{A}^{\scalebox{0.5}{\#}}$, is the unique dual matrix $\hat{X} \in \mathbb{DC}^{n \times n}$ satisfying:
\begin{align}\label{E3}
\hat{A}\hat{X}\hat{A} = \hat{A}, \quad \hat{X}\hat{A}\hat{X} = \hat{X}, \quad \hat{A}\hat{X} = \hat{X}\hat{A}.
\end{align}

\noindent In 2023,  the dual core generalized inverse (DCGI) of $\hat{A} \in \mathbb{DC}_n^{\mathrm{CM}}$ (Wang and Gao \citeyear{Wang1}), denoted $\hat{X} =\hat{A}^{\tiny\textcircled{\#}}$, is defined to be the unique dual matrix $\hat{X} \in \mathbb{DC}^{n \times n}$ satisfying
\begin{align}\label{En}
 \hat{A}\hat{X}\hat{A} = \hat{A}, \quad \hat{A}\hat{X}^2 = \hat{X}, \quad (\hat{A}\hat{X})^* = \hat{A}\hat{X}.
\end{align}

\noindent Of course, the existence of DGGI and DCGI also depends on certain conditions.
\begin{lemma}\emph{(Zhong and Zhang \citeyear{Zhong1})}
   Let \(\hat{A} = A_s + \epsilon A_d \in \mathbb{DC}^{m \times n}\) with $\operatorname{AInd}(\hat{A})=1$. Then, the following conditions are equivalent:

\noindent $(a)$ The DGGI \(\hat{A}^{\scalebox{0.5}{\#}}\) of \(\hat{A}\) exists;

\noindent $(b)$ \((I_m - A_sA_s^{\scalebox{0.5}{\#}})A_d(I_n - A_sA_s^{\scalebox{0.5}{\#}}) = 0\);

\noindent $(c)$ \(\operatorname{Rank}\begin{bmatrix} A_d & A_s \\ A_s & 0 \end{bmatrix} = 2 \, \operatorname{Rank}(A_s)\);

\noindent $(d)$ $\hat{A}^{\dagger}$ exists.\\
Furthermore, if the DGGI $\hat{A}^{\scalebox{0.5}{\#}}$ exists, then
\[
\hat{A}^{\scalebox{0.5}{\#}} = A_s^{\scalebox{0.5}{\#}} - \epsilon R,
\]
where
\[
R = A_s^{\scalebox{0.5}{\#}} A_d A_s^{\scalebox{0.5}{\#}} - \left(A_s^{\scalebox{0.5}{\#}}\right)^2 A_d \left(I - A_s A_s^{\scalebox{0.5}{\#}}\right) - \left(I - A_s A_s^{\scalebox{0.5}{\#}}\right) A_d \left(A_s^{\scalebox{0.5}{\#}}\right)^2.
\]
\end{lemma}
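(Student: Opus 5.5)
Fix $\hat{A}=A_s+\epsilon A_d$ with $\operatorname{AInd}(\hat{A})=1$, so that $A_s^{\scalebox{0.5}{\#}}$ exists. Write $P=A_sA_s^{\scalebox{0.5}{\#}}$ for the (oblique) projector onto $\mathcal{R}(A_s)$ along $\mathcal{N}(A_s)$. The plan is to reduce everything to the two standard-part/infinitesimal-part equations obtained by expanding (\ref{E3}) in powers of $\epsilon$.

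\textbf{(a)$\Rightarrow$(b).} Write $\hat X=X_s+\epsilon X_d$. The $\epsilon^0$ parts of (\ref{E3}) say that $X_s=A_s^{\scalebox{0.5}{\#}}$, by uniqueness of the group inverse. The $\epsilon^1$ part of $\hat A\hat X\hat A=\hat A$ reads
\[
A_d = A_dA_s^{\scalebox{0.5}{\#}}A_s + A_sX_dA_s + A_sA_s^{\scalebox{0.5}{\#}}A_d .
\]
Multiply this identity on the left by $(I-P)$ and on the right by $(I-P)$. Because $A_s^{\scalebox{0.5}{\#}}A_s=P$ and $(I-P)A_s=A_s(I-P)=0$, the three terms on the right all vanish. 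This leaves $(I-P)A_d(I-P)=0$, which is (b).

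\textbf{(b)$\Rightarrow$(a).} Define $\hat X=A_s^{\scalebox{0.5}{\#}}-\epsilon R$ with $R$ as stated, and verify (\ref{E3}) directly. The $\epsilon^0$ parts hold by the properties of $A_s^{\scalebox{0.5}{\#}}$. For the $\epsilon^1$ parts, I will use $A_sA_s^{\scalebox{0.5}{\#}}=A_s^{\scalebox{0.5}{\#}}A_s=P$, $PA_s^{\scalebox{0.5}{\#}}=A_s^{\scalebox{0.5}{\#}}$, and $(I-P)A_s^{\scalebox{0.5}{\#}}=0$. Then decompose $A_d=PA_dP+PA_d(I-P)+(I-P)A_dP$, which is valid because (b) kills the fourth block. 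Checking each of the three equations block by block against these pieces is routine.

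\textbf{(b)$\Leftrightarrow$(c).} I would use the equivalence of Lemma~\ref{TJ1}(b),(c) in spirit. Choose a core–nilpotent (here index-one) similarity $A_s=S\,\mathrm{diag}(C,0)\,S^{-1}$ with $C$ invertible, and write $S^{-1}A_dS=\begin{bmatrix}D_1&D_2\\D_3&D_4\end{bmatrix}$. Then $P=S\,\mathrm{diag}(I,0)\,S^{-1}$, so (b) is equivalent to $D_4=0$. Conjugating the block matrix in (c) by $\mathrm{diag}(S,S)$ and using the invertible $C$ to eliminate $D_1,D_2,D_3$ by block row and column operations, the rank becomes $2\operatorname{Rank}(C)+\operatorname{Rank}(D_4)$. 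Hence (c) holds if and only if $D_4=0$.

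\textbf{(c)$\Leftrightarrow$(d).} This is exactly Lemma~\ref{TJ1}, since the rank condition (c) is literally its item (c).

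The main obstacle I expect is the bookkeeping in verifying the three $\epsilon$-part identities for $R$. The cleanest route is to pass to the similarity coordinates above, where $A_s^{\scalebox{0.5}{\#}}=S\,\mathrm{diag}(C^{-1},0)\,S^{-1}$. There one can solve for $X_d$ blockwise and see that
\[
S^{-1}X_dS=\begin{bmatrix}-C^{-1}D_1C^{-1}& C^{-2}D_2\\ D_3C^{-2}&0\end{bmatrix},
\]
which matches $-R$ after transforming back.
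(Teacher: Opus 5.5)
\textbf{Verdict.} Your proposal is correct. The paper gives no proof of this lemma; it quotes it from Zhong and Zhang, so there is no in-paper argument to compare your proposal with.

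The individual steps check out:
\begin{itemize}
\item Sandwiching the $\epsilon^1$ part of $\hat A\hat X\hat A=\hat A$ between two copies of $I-P$ does give (b).
\item In the index-one coordinates $A_s=S\,\mathrm{diag}(C,0)S^{-1}$, the three $\epsilon^1$ equations decouple blockwise, and the $(2,2)$ block of the first one is exactly $D_4=0$.
\item The bordered matrix has rank $2\operatorname{Rank}(C)+\operatorname{Rank}(D_4)$.
\item Your $S^{-1}X_dS$ coincides with $-S^{-1}RS$, whose blocks are $C^{-1}D_1C^{-1}$, $-C^{-2}D_2$, $-D_3C^{-2}$ and $0$.
\end{itemize}

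\textbf{Two points to make explicit.}
\begin{itemize}
\item The lemma only makes sense for $m=n$; the $m\times n$ in the statement is a slip.
\item The ``Furthermore'' formula needs uniqueness of the DGGI. You can quote the standard ring-theoretic uniqueness of group inverses. Alternatively, write $S^{-1}X_dS=\begin{bmatrix}Y_1&Y_2\\Y_3&Y_4\end{bmatrix}$ and note that your blockwise solve forces the blocks:
  \begin{itemize}
  \item $Y_1=-C^{-1}D_1C^{-1}$, from the first equation;
  \item $Y_2=C^{-2}D_2$ and $Y_3=D_3C^{-2}$, from the commutation equation;
  \item $Y_4=0$, from the second equation.
  \end{itemize}
  So every solution of (\ref{E3}) equals $A_s^{\scalebox{0.5}{\#}}-\epsilon R$.
\end{itemize}

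\textbf{Comparison with the paper's methodology.} Where the paper treats the group inverse itself (Theorems~\ref{J1}, \ref{tL1}, \ref{tL}), it works in a dual \emph{unitary} Hartwig--Spindelb\"ock frame. It imports the existence equivalences from the literature and only verifies (\ref{E3}) for the candidate $\hat U\begin{bmatrix}K^{-1}\Sigma_1^{-1}&K^{-1}\Sigma_1^{-1}K^{-1}L\\0&0\end{bmatrix}\hat U^*$.

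Your non-unitary core--nilpotent similarity of $A_s$ makes $A_sA_s^{\scalebox{0.5}{\#}}=S\,\mathrm{diag}(I,0)S^{-1}$ block diagonal. That is what makes (b), (c) and the $\epsilon$-expansion transparent, and it lets you prove the existence criterion from scratch. In the unitary frame, $A_sA_s^{\scalebox{0.5}{\#}}=U_s\begin{bmatrix}I&K_1^{-1}L_1\\0&0\end{bmatrix}U_s^*$ is an oblique projector, so (b) is awkward to read off. That frame pays off instead for $\hat A^{\dagger}$ and the core inverse, whose projectors are orthogonal.

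\textbf{A shortcut.} You can skip the rank condition when proving (b)$\Leftrightarrow$(d). When $\operatorname{Ind}(A_s)=1$, both (b) and Lemma~\ref{TJ1}(b) say precisely that $A_d\,\mathcal N(A_s)\subseteq\mathcal R(A_s)$.
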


\begin{lemma}\emph{(Wang and Gao \citeyear{Wang1})}
   Let \(\hat{A} = A_s + \epsilon A_d \in \mathbb{DC}^{m \times n}\) with $\operatorname{AInd}(\hat{A})=1$. Then, the following conditions are equivalent:

\noindent $(a)$ The DCGI \(\hat{A}^{\tiny\textcircled{\#}}\) of \(\hat{A}\) exists;

\noindent $(b)$ \((I_m - A_sA_s^{\tiny\textcircled{\#}})A_d(I_n - A_s^{\tiny\textcircled{\#}}A_s) = 0\).\\
Furthermore, if the DCGI $\hat{A}^{\tiny\textcircled{\#}}$ exists, then
\[
\hat{A}^{\tiny\textcircled{\#}} = A_s^{\tiny\textcircled{\#}} - \epsilon R,
\]
where
\[  
R= A_s^{\tiny\textcircled{\#}} A_d A_s^{\dagger} - A_s^{\scalebox{0.5}{\#}} A_d A_s^{\dagger} + A_s^{\scalebox{0.5}{\#}} A_d A_s^{\tiny\textcircled{\#}} - A_s^{\tiny\textcircled{\#}} (A_d A_s^{\dagger})^* (I_{n} - A_s A_s^{\dagger}) - (I_{n} - A_s A_s^{\scalebox{0.5}{\#}}) A_d A_s^{\scalebox{0.5}{\#}} A_s^{\tiny\textcircled{\#}}.
\]
\end{lemma}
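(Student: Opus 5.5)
The plan is to reduce the dual equations \eqref{En} to a pair of complex matrix systems, one for the standard part and one for the infinitesimal part, and then handle those using the classical core inverse of $A_s$. Since $\operatorname{AInd}(\hat{A})=\operatorname{Ind}(A_s)=1$, the matrix $\hat A$ is square, say $n\times n$. Write $\hat{X}=X_s+\epsilon X_d$. Expanding $\hat A\hat X\hat A=\hat A$, $\hat A\hat X^2=\hat X$ and $(\hat A\hat X)^*=\hat A\hat X$ and using $\epsilon^2=0$, the standard parts give
\[
A_sX_sA_s=A_s,\quad A_sX_s^2=X_s,\quad (A_sX_s)^*=A_sX_s .
\]
Because $\operatorname{Ind}(A_s)=1$, uniqueness of the complex core inverse forces $X_s=A_s^{\tiny\textcircled{\#}}$. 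The infinitesimal parts give the linear system
\begin{align*}
&A_sX_dA_s=A_d-A_dX_sA_s-A_sX_sA_d,\\
&X_d=A_dX_s^2+A_sX_dX_s+A_sX_sX_d,\\
&(A_dX_s+A_sX_d)^*=A_dX_s+A_sX_d .
\end{align*}
So the DCGI exists if and only if this system is solvable in $X_d$. By uniqueness, the formula then amounts to identifying the solution as $X_d=-R$.

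For $(a)\Rightarrow(b)$, I will use the complex identities
\[
A_sA_s^{\tiny\textcircled{\#}}=A_sA_s^\dagger,\qquad A_s^{\tiny\textcircled{\#}}A_s=A_s^{\scalebox{0.5}{\#}}A_s,\qquad A_s^{\tiny\textcircled{\#}}=A_s^{\scalebox{0.5}{\#}}A_sA_s^\dagger .
\]
Multiply the first infinitesimal equation on the left by $I_n-A_sA_s^{\tiny\textcircled{\#}}$ and on the right by $I_n-A_s^{\tiny\textcircled{\#}}A_s$. Since $(I_n-A_sA_s^{\tiny\textcircled{\#}})A_s=0$ and $A_s(I_n-A_s^{\tiny\textcircled{\#}}A_s)=A_s(I_n-A_s^{\scalebox{0.5}{\#}}A_s)=0$, every term involving $A_s$ on the outside vanishes. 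This leaves exactly $(I_n-A_sA_s^{\tiny\textcircled{\#}})A_d(I_n-A_s^{\tiny\textcircled{\#}}A_s)=0$, which is (b).

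For $(b)\Rightarrow(a)$ together with the formula, I will take $X_s=A_s^{\tiny\textcircled{\#}}$ and $X_d=-R$ with $R$ as stated, and verify the three infinitesimal equations directly. I will organize each verification by splitting $A_d$ along the projectors $P=A_sA_s^\dagger$ and $Q=A_s^{\scalebox{0.5}{\#}}A_s$:
\[
A_d=PA_dQ+PA_d(I_n-Q)+(I_n-P)A_dQ+(I_n-P)A_d(I_n-Q).
\]
Condition (b) says precisely that the last block vanishes. On each of the remaining blocks, the terms of $R$ simplify through the relations $A_sA_s^{\scalebox{0.5}{\#}}=A_s^{\scalebox{0.5}{\#}}A_s$, $A_s^{\scalebox{0.5}{\#}}A_sA_s^{\scalebox{0.5}{\#}}=A_s^{\scalebox{0.5}{\#}}$, $A_s^{\tiny\textcircled{\#}}A_sA_s^{\tiny\textcircled{\#}}=A_s^{\tiny\textcircled{\#}}$ and $(I_n-P)A_s=0$.

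The first equation, $A_sX_dA_s=\cdots$, should follow quickly, since most terms of $R$ are annihilated by a factor $A_s$ on either side; the surviving pieces reproduce $-PA_dQ$ and the corrections from the off-diagonal blocks. The second equation is a fixed-point identity. I expect it to reduce to checking that $-R$ is reproduced by $A_dX_s^2+A_sX_dX_s+A_sX_sX_d$, and this reduction again rests on (b) to kill the unwanted block.

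The main obstacle is the third, Hermitian, condition. There $A_dX_s+A_sX_d$ must be shown to equal its conjugate transpose. Only the term $-A_s^{\tiny\textcircled{\#}}(A_dA_s^\dagger)^*(I_n-A_sA_s^\dagger)$ in $R$ produces the needed adjoint. My plan is to compute $A_dA_s^{\tiny\textcircled{\#}}-A_sR$ explicitly as
\[
PA_dA_s^\dagger P+(I_n-P)A_dA_s^\dagger+\bigl((I_n-P)A_dA_s^\dagger\bigr)^*,
\]
after cancellations using $A_sA_s^{\tiny\textcircled{\#}}=P$, and then observe that the middle block $PA_dA_s^\dagger P$ combines with the remaining terms of $R$ to something Hermitian. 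I expect this bookkeeping to be the hardest step; if it resists, I would fall back on a unitary (Hartwig–Spindelb\"ock type) block form of $A_s$ and check the Hermitian condition blockwise.
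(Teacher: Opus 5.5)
\textbf{Where the proposal stands.} The paper only quotes this lemma from Wang and Gao and gives no proof, so your plan has to be judged on its own. Its skeleton is sound. The standard-part equations are exactly the complex core-inverse equations, so $X_s=A_s^{\tiny\textcircled{\#}}$. Your sandwich argument for $(a)\Rightarrow(b)$ is complete.

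\textbf{The gap: the Hermitian condition in $(b)\Rightarrow(a)$.} This direction is the real content of the lemma. You only assert the first two infinitesimal equations, and the one identity you commit to for the Hermitian condition is false. Write $C=A_s^{\tiny\textcircled{\#}}$, $P=A_sA_s^\dagger$, $Q=A_s^{\scalebox{0.5}{\#}}A_s$. The last term of $R$ is killed by $A_s(I_n-A_sA_s^{\scalebox{0.5}{\#}})=0$. Using $A_sC=P$, $A_sA_s^{\scalebox{0.5}{\#}}=Q$ and $A_s^\dagger P=A_s^\dagger$, one gets, with no hypothesis at all,
\[
A_dC-A_sR=(I_n-Q)A_dC+(Q-P)A_dA_s^\dagger+\bigl((I_n-P)A_dA_s^\dagger\bigr)^*.
\]
There is no term $PA_dA_s^\dagger P$. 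Such a term is not Hermitian in general, and no ``remaining terms of $R$'' are left to absorb it. What you are missing is that (b) is needed here as well, not just $A_sC=P$. Since $C=QA_s^\dagger$ and $I_n-Q=(I_n-Q)(I_n-P)$,
\[
(I_n-Q)A_dC=(I_n-Q)A_dA_s^\dagger-(I_n-Q)(I_n-P)A_d(I_n-Q)A_s^\dagger=(I_n-Q)A_dA_s^\dagger .
\]
Hence $A_dC-A_sR=(I_n-P)A_dA_s^\dagger+\bigl((I_n-P)A_dA_s^\dagger\bigr)^*$, which is Hermitian.

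\textbf{The other two equations.} These checks are equally short and need no four-block splitting of $A_d$. The identities to use are:
\begin{itemize}
\item $QP=P$ and $PQ=Q$, so $Q-P=Q(I_n-P)$ and $(I_n-P)(I_n-Q)=I_n-P$;
\item $P'-Q=(I_n-Q)P'$ for $P'=A_s^\dagger A_s$;
\item $A_s^\dagger-C=(I_n-Q)A_s^\dagger$ and $A_s^{\scalebox{0.5}{\#}}C=C^2$.
\end{itemize}
Take $X_d=-R$. In the first infinitesimal equation, the right-hand side minus the left-hand side equals $(I_n-P)A_d(I_n-Q)-Q(I_n-P)A_d(I_n-Q)P'$. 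In the second, the residual $X_d-(A_dX_s^2+A_sX_dX_s+A_sX_sX_d)$ equals $-Q(I_n-P)A_d(I_n-Q)A_s^\dagger C$. Like the non-Hermitian defect above, both contain the factor $(I_n-P)A_d(I_n-Q)$ and vanish by (b).

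\textbf{Your block-form fallback.} This is also a complete route. Take $A_s=U\begin{bmatrix}T&S\\0&0\end{bmatrix}U^*$ with $T$ invertible and $U^*A_dU=[B_{ij}]$. The second and third infinitesimal equations alone force
\[
X_d=U\begin{bmatrix}-T^{-1}B_{11}T^{-1}-T^{-1}SB_{21}T^{-2}&(T^*T)^{-1}B_{21}^*\\B_{21}T^{-2}&0\end{bmatrix}U^* .
\]
The first equation then reduces to $B_{22}=B_{21}T^{-1}S$, which is exactly (b). Under (b), this $X_d$ equals $-R$. This gives existence, uniqueness and the formula at once.
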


\section{D-H-S decomposition}\label{3}
First, we use the dual-SVD to derive the D-H-S decomposition.
\begin{theorem}\label{LL1}
  Let $\hat{A} \in \mathbb{DC}^{n \times n}$ with $t=\operatorname{Rank}(\hat{A})$.
Then there exist $\hat{U} \in \mathbb{DC}_n^{U}$ such that
\begin{align}\label{F0}
 \hat{A}  &=\hat{U}\begin{bmatrix}
\Sigma_0 K_0 & \Sigma_0 L_0\\
0 & 0
\end{bmatrix}\hat{U}^*,
\end{align}
where $K_0 \in \mathbb{DC}^{t \times t}$, $L_0 \in \mathbb{DC}^{t \times (n-t)}$ satisfying $K_0K_0^*+L_0L_0^*=I_t$,
and $$ \Sigma_0 = \mathrm{diag} (\mu_1, \ldots, \mu_r, \mu_{r+1}, \cdots, \mu_{t}),$$ 
with
$\mu_{1} \geq \mu_{2} \geq \cdots \geq \mu_{r}$ being positive appreciable dual numbers, $\mu_{r+1} \geq \mu_{r+2} \geq \cdots \geq \mu_{t}$ being positive infinitesimal dual numbers.
\end{theorem}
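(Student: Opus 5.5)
We follow the classical Hartwig--Spindelb\"ock argument, with the complex SVD replaced by the dual-SVD of Lemma~\ref{L0}. Applying Lemma~\ref{L0} to $\hat{A}\in\mathbb{DC}^{n\times n}$ gives $\hat{U},\hat{V}\in\mathbb{DC}_n^{U}$ with
\[
\hat{A}=\hat{U}\begin{bmatrix}\Sigma_0 & 0\\ 0 & 0\end{bmatrix}\hat{V}^*,
\]
where $\Sigma_0=\operatorname{diag}(\mu_1,\dots,\mu_t)$ has exactly the ordering and appreciable/infinitesimal structure required in the statement, and $t=\operatorname{Rank}(\hat{A})$ by definition of the dual rank. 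We keep the same $\hat{U}$ and rewrite $\hat{V}^*=\hat{U}^*(\hat{U}\hat{V}^*)$. This gives
\[
\hat{A}=\hat{U}\begin{bmatrix}\Sigma_0 & 0\\ 0 & 0\end{bmatrix}\hat{W}\hat{U}^*,\qquad \hat{W}:=\hat{V}^*\hat{U}.
\]
Since $\hat{U}$ and $\hat{V}$ are dual unitary, $\hat{W}$ is a product of dual unitary matrices and hence lies in $\mathbb{DC}_n^{U}$. Here we use that $(\hat{X}\hat{Y})^*=\hat{Y}^*\hat{X}^*$ and $\hat{X}^*\hat{X}=I$ hold componentwise in dual arithmetic; both are routine checks from the definitions of the dual product and conjugate transpose.

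Next we partition $\hat{W}$ conformally with the block structure, writing
\[
\hat{W}=\begin{bmatrix}K_0 & L_0\\ M & N\end{bmatrix},\qquad K_0\in\mathbb{DC}^{t\times t},\quad L_0\in\mathbb{DC}^{t\times(n-t)}.
\]
Multiplying out,
\[
\begin{bmatrix}\Sigma_0 & 0\\ 0 & 0\end{bmatrix}\hat{W}=\begin{bmatrix}\Sigma_0K_0 & \Sigma_0L_0\\ 0 & 0\end{bmatrix},
\]
which is exactly the form (\ref{F0}). The relation $K_0K_0^*+L_0L_0^*=I_t$ is the $(1,1)$ block of $\hat{W}\hat{W}^*=I_n$. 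Note that this uses $\hat{W}\hat{W}^*=I_n$ and not only $\hat{W}^*\hat{W}=I_n$. Both hold because $\hat{W}^{-1}=\hat{W}^*$ is a two-sided inverse, by the definition of dual unitary matrices in the introduction.

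The argument is formally identical to the complex case, so the only place needing care is the dual-number algebra. We must confirm that block multiplication over $\mathbb{DC}$ behaves like the complex case; this holds because $\mathbb{DC}$ is a commutative ring, so block matrix identities remain valid. We must also confirm the identity $\hat{W}\hat{W}^*=I_n$, i.e.\ that a one-sided inverse relation for dual unitary matrices gives the two-sided one. This follows from the definition $\hat{W}^{-1}=\hat{W}^*$, together with the fact that invertibility of $\hat{W}$ reduces to invertibility of its standard part, so any one-sided inverse is two-sided. We expect this verification to be the main (mild) obstacle; it can be checked by splitting into standard and infinitesimal parts: $W_sW_s^*=I$, and $W_sW_d^*+W_dW_s^*=0$.
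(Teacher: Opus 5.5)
Your proposal is correct and is essentially the paper's own proof: the paper also applies the dual-SVD, sets $\hat{V}^*\hat{U}=\begin{bmatrix}K_0 & L_0\\ M_0 & N_0\end{bmatrix}\in\mathbb{DC}_n^{U}$, and reads $K_0K_0^*+L_0L_0^*=I_t$ off the $(1,1)$ block of $(\hat{V}^*\hat{U})(\hat{V}^*\hat{U})^*=I_n$. One small slip: the rewriting you need is $\hat{V}^*=(\hat{V}^*\hat{U})\hat{U}^*$, not $\hat{V}^*=\hat{U}^*(\hat{U}\hat{V}^*)$; your displayed formula with $\hat{W}=\hat{V}^*\hat{U}$ placed before $\hat{U}^*$ is already the correct one.
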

\begin{proof}
  \noindent  For $\hat{A} \in \mathbb{DC}^{n \times n}$, Lemma~\ref{L0} implies $\hat{A}$ has the form given in equation (\ref{A1}).
\noindent Suppose $$
\hat{V}^*\hat{U}=\begin{bmatrix}
  K_0 & L_0 \\ M_0 & N_0
\end{bmatrix} \in \mathbb{DC}_n^{U},$$ 
where $K_0 \in \mathbb{DC}^{t \times t}$, $L_0 \in \mathbb{DC}^{t \times (n-t)}$.
Thus,
\begin{align*}
 \hat{A} = \hat{U} \begin{bmatrix} 
 \Sigma_0 & 0 \\ 0 & 0
\end{bmatrix} \hat{V}^* 
 &= \hat{U}\begin{bmatrix} 
 \Sigma_0 & 0 \\ 0 & 0
\end{bmatrix}\begin{bmatrix} 
  K_0 & L_0 \\ M_0 & N_0
\end{bmatrix} \hat{U}^*
=\hat{U}\begin{bmatrix}
\Sigma_0 K_0 & \Sigma_0 L_0\\
0 & 0
\end{bmatrix}\hat{U}^*.
\end{align*}
Furthermore, $(\hat{V}^*\hat{U})(\hat{V}^*\hat{U})^*=I_n$ leads directly to $K_0K_0^*+L_0L_0^*=I_t$.
\end{proof}

\noindent Recently, Be and Mishra (\citeyear{Be}) formulated a version of the D-H-S decomposition. To enhance its utility for subsequent research, we have further supplemented it with a relational expression concerning its dual submatrix.

\begin{lemma}\label{CS}\emph{(Be and Mishra \citeyear{Be})}
  Let $\hat{A} \in \mathbb{DC}^{n \times n}$ with $r=\operatorname{ARank}(\hat{A})$ and $t=\operatorname{Rank}(\hat{A})$.
Then there exist $\hat{U} \in \mathbb{DC}_n^{U}$ such that
\begin{align}
 \hat{A} &=\hat{U}\begin{bmatrix}
\Sigma_1 K & \Sigma_1 L\\
\Sigma_2 M & \Sigma_2 N
\end{bmatrix}\hat{U}^*, \label{F1}
\end{align}
where $K \in \mathbb{DC}^{r \times r}$, $L \in \mathbb{DC}^{r \times (n-r)}$, $M\in \mathbb{DC}^{(n-r)\times r }$ and $N\in \mathbb{DC}^{(n-r)\times (n-r) }$ satisfying 
\begin{align}\label{SS}
KK^*+LL^*=I_r, \quad KM^*+LN^*=0, \quad MM^*+NN^*=I_{n-r},
\end{align}
and $\Sigma_1 = \Sigma_{\mathrm{1s}} + \epsilon \Sigma_{\mathrm{1d}} = \operatorname{diag}(\mu_1, \ldots, \mu_r)$ with $\mu_1 \geq \cdots \geq \mu_r$ being positive appreciable dual numbers,
 $\Sigma_2 = \epsilon \Sigma_{\mathrm{2d}} = \operatorname{diag}(\mu_{r+1}, \ldots, \mu_t)$ with $\mu_{r+1} \geq \cdots \geq \mu_t$ being positive infinitesimal dual numbers.
\end{lemma}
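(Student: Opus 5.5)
The plan is to mimic the proof of Theorem~\ref{LL1}, but to use the finer partition of the dual-SVD given in Remark~\ref{RE} instead of the coarse block $\Sigma_0$. By Lemma~\ref{L0} and (\ref{MM}), write
\[
\hat{A}=\hat{U}\begin{bmatrix}\Sigma_1 & 0\\ 0 & \Sigma_2\end{bmatrix}\hat{V}^*,
\]
where $\hat{U},\hat{V}\in\mathbb{DC}_n^{U}$. Here $\Sigma_1=\operatorname{diag}(\mu_1,\ldots,\mu_r)$ is $r\times r$ with $r=\operatorname{ARank}(\hat{A})$, and $\Sigma_2=\epsilon\Sigma_{\mathrm{2d}}$ is the $(n-r)\times(n-r)$ diagonal block carrying the infinitesimal singular values $\mu_{r+1},\ldots,\mu_t$, padded with zeros. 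Partition the dual unitary matrix $\hat{V}^*\hat{U}$ conformally as
\[
\hat{V}^*\hat{U}=\begin{bmatrix}K & L\\ M & N\end{bmatrix},
\]
with $K\in\mathbb{DC}^{r\times r}$, $L\in\mathbb{DC}^{r\times(n-r)}$, $M\in\mathbb{DC}^{(n-r)\times r}$ and $N\in\mathbb{DC}^{(n-r)\times(n-r)}$.

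Since $\hat{V}^*=(\hat{V}^*\hat{U})\hat{U}^*$, substituting gives
\[
\hat{A}=\hat{U}\begin{bmatrix}\Sigma_1&0\\0&\Sigma_2\end{bmatrix}\begin{bmatrix}K&L\\M&N\end{bmatrix}\hat{U}^*=\hat{U}\begin{bmatrix}\Sigma_1K&\Sigma_1L\\ \Sigma_2M&\Sigma_2N\end{bmatrix}\hat{U}^*,
\]
which is exactly (\ref{F1}). For the relations (\ref{SS}), note that a product of dual unitary matrices is dual unitary, because $(\hat{V}^*\hat{U})^*=\hat{U}^*\hat{V}$ and $\hat{U}^*\hat{V}\hat{V}^*\hat{U}=I_n$. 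Expanding $(\hat{V}^*\hat{U})(\hat{V}^*\hat{U})^*=I_n$ blockwise then gives $KK^*+LL^*=I_r$, $KM^*+LN^*=0$ and $MM^*+NN^*=I_{n-r}$. The $(2,1)$ block equation $MK^*+NL^*=0$ is simply the conjugate transpose of the $(1,2)$ one.

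The only point needing care is bookkeeping rather than a real obstacle. The statement writes $\Sigma_2=\operatorname{diag}(\mu_{r+1},\ldots,\mu_t)$, whereas the block in (\ref{F1}) must be $(n-r)\times(n-r)$. I will therefore follow Remark~\ref{RE} and read $\Sigma_2$ as padded by $n-t$ zeros, noting this explicitly. I will also check that the dual arithmetic used above (associativity of multiplication and $(\hat{X}\hat{Y})^*=\hat{Y}^*\hat{X}^*$) holds for dual matrices. This is immediate because $\epsilon^2=0$ acts componentwise on the standard and infinitesimal parts.
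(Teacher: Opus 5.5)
Your proof is correct and follows the construction the paper itself relies on. Lemma~\ref{CS} is only cited from Be and Mishra, but the paper's proof of Theorem~\ref{LL1} and the computation in Remark~\ref{RR} use exactly your partition $\hat{V}^*\hat{U}=\begin{bmatrix}K&L\\M&N\end{bmatrix}$ against the block form (\ref{MM}) of the dual-SVD, reading (\ref{SS}) off from $(\hat{V}^*\hat{U})(\hat{V}^*\hat{U})^*=I_n$. Your treatment of $\Sigma_2$ as padded with $n-t$ zeros to size $(n-r)\times(n-r)$ correctly resolves the dimension mismatch in the statement and agrees with Remark~\ref{RE} and Theorem~\ref{L1}.
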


\noindent Due to the fact that the size of two dual matrices is determined by comparing their standard parts and infinitesimal parts separately, 
the D-H-S decomposition is further refined to extend its scope to dual matrices.

\begin{theorem}\label{L1}
Let $\hat{A} \in \mathbb{DC}^{n \times n}$ with $r=\operatorname{ARank}(\hat{A})$ and $t=\operatorname{Rank}(\hat{A})$.
Then there exist $\hat{U} \in \mathbb{DC}_n^{U}$ such that
\begin{align}
 \hat{A} 
&= \hat{U} \left(\begin{bmatrix}
\Sigma_1 K_{1} & \Sigma_1 L_{1} \\
\Sigma_2 M_{1} & \Sigma_2  N_{1}
\end{bmatrix} +  \epsilon \begin{bmatrix}
\Sigma_1 K_{2} & \Sigma_1 L_{2} \\
\Sigma_2 M_{2} & \Sigma_2  N_{2}
\end{bmatrix}\right)\hat{U}^* \notag\\
&= \hat{U} \left(\begin{bmatrix}
\Sigma_{\mathrm{1s}} K_{1} & \Sigma_{\mathrm{1s}} L_{1} \\ 0 & 0
\end{bmatrix} +\epsilon \begin{bmatrix}
\Sigma_{\mathrm{1d}} K_{1}+\Sigma_{\mathrm{1s}} K_{2} & \Sigma_{\mathrm{1d}} L_{1}+\Sigma_{\mathrm{1s}} L_{2} \\
\Sigma_{\mathrm{2d}} M_{1} & \Sigma_{\mathrm{2d}}  N_{1}
\end{bmatrix}\right)\hat{U}^* \label{ab},
\end{align}
with
  \begin{align*}
  &\Sigma_1 = \Sigma_{\mathrm{1s}} +\epsilon \Sigma_{\mathrm{1d}} = \operatorname{diag}(\mu_1, \ldots, \mu_r)\ \  \text{and} \ \
  \Sigma_2 = \epsilon \Sigma_{\mathrm{2d}}= \operatorname{diag}(\mu_{r+1}, \ldots, \mu_{t}, 0, \ldots, 0)\epsilon,
  \end{align*}
where $\mu_1, \cdots, \mu_r$ are positive appreciable dual numbers, $\mu_{r+1},\cdots,\mu_t$ are positive infinitesimal dual numbers,
and $$K_1,K_2 \in \mathbb{C}^{r \times r},\ \ L_1,L_2 \in \mathbb{C}^{r \times (n-r)},\ \ M_1\in \mathbb{C}^{(n-r)\times r },\ \ N_1\in \mathbb{C}^{(n-r)\times (n-r) }$$ satisfying
\begin{align}
 &K_1 K_1^*+L_1L_1^*=I_r \quad \text{and} \quad K_1 K_2^* +K_2 K_1^*+L_1 L_2^*+L_2 L_1^*=0,\label{S1} \\
&M_1 M_1^*+N_1N_1^*=I_{n-r} \quad \text{and} \quad K_1 M_1^* +L_1 N_1^*=0.\label{S2}
\end{align}
\end{theorem}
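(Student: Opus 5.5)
Starting from Lemma~\ref{CS}, I write $\hat{A}=\hat{U}\begin{bmatrix}\Sigma_1 K & \Sigma_1 L\\ \Sigma_2 M & \Sigma_2 N\end{bmatrix}\hat{U}^*$ with dual blocks $K,L,M,N$ satisfying (\ref{SS}). I take $\Sigma_2$ of size $(n-r)\times(n-r)$, padded with zeros as in Remark~\ref{RE}. The lemma's $(n-r)$-row block with $\Sigma_2$ of size $t-r$ is understood this way; more directly, the construction below comes from Theorem~\ref{LL1} via the unitary $\hat{W}=\hat{V}^*\hat{U}$, re-partitioned into $r$ and $n-r$ blocks. The plan is to split each dual block into standard and infinitesimal parts,
\[
K=K_1+\epsilon K_2,\quad L=L_1+\epsilon L_2,\quad M=M_1+\epsilon M_2,\quad N=N_1+\epsilon N_2,
\]
with $K_i,L_i,M_i,N_i$ complex matrices. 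Substituting gives the first displayed form of (\ref{ab}) at once, since $\Sigma_1 K=\Sigma_1K_1+\epsilon\Sigma_1K_2$, and similarly for the other blocks.

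Next I expand using $\Sigma_1=\Sigma_{\mathrm{1s}}+\epsilon\Sigma_{\mathrm{1d}}$, $\Sigma_2=\epsilon\Sigma_{\mathrm{2d}}$ and $\epsilon^2=0$. This gives $\Sigma_1K_1+\epsilon\Sigma_1K_2=\Sigma_{\mathrm{1s}}K_1+\epsilon(\Sigma_{\mathrm{1d}}K_1+\Sigma_{\mathrm{1s}}K_2)$, and the same for the $L$ block. It also gives $\Sigma_2M_1+\epsilon\Sigma_2M_2=\epsilon\Sigma_{\mathrm{2d}}M_1$, because $\epsilon\cdot\epsilon=0$ kills the $M_2$ term, and likewise for $N$. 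This yields the second line of (\ref{ab}). It also explains why $M_2,N_2$ do not appear in the relations: they are annihilated.

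For the constraints, I compare standard and infinitesimal parts of the identities in (\ref{SS}). From $KK^*+LL^*=I_r$, the standard part gives $K_1K_1^*+L_1L_1^*=I_r$. The $\epsilon$-coefficient gives $K_1K_2^*+K_2K_1^*+L_1L_2^*+L_2L_1^*=0$, which is (\ref{S1}). The standard parts of $MM^*+NN^*=I_{n-r}$ and $KM^*+LN^*=0$ give $M_1M_1^*+N_1N_1^*=I_{n-r}$ and $K_1M_1^*+L_1N_1^*=0$, which is (\ref{S2}).

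The only point I expect to need care is making the block sizes consistent: the zero-padded $\Sigma_2$ must be paired with a full $(n-r)\times(n-r)$ unitary partition, so that $MM^*+NN^*=I_{n-r}$ really holds. I will justify this by taking $\begin{bmatrix}K&L\\M&N\end{bmatrix}=\hat{V}^*\hat{U}$ from the dual-SVD, as in the proof of Theorem~\ref{LL1}. There $\begin{bmatrix}\Sigma_0&0\\0&0\end{bmatrix}=\operatorname{diag}(\Sigma_1,\Sigma_2)$ by (\ref{MM}), and $(\hat{V}^*\hat{U})(\hat{V}^*\hat{U})^*=I_n$ gives all three relations in (\ref{SS}) directly.
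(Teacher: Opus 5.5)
Your proof is correct and follows the same route as the paper. You split $K,L,M,N$ from Lemma~\ref{CS} into standard and infinitesimal parts, expand using $\Sigma_1=\Sigma_{\mathrm{1s}}+\epsilon\Sigma_{\mathrm{1d}}$, $\Sigma_2=\epsilon\Sigma_{\mathrm{2d}}$ and $\epsilon^2=0$, and read off (\ref{S1})--(\ref{S2}) from (\ref{SS}). Your extra step of padding $\Sigma_2$ to size $(n-r)\times(n-r)$ and taking the unitary block partition directly from $\hat{V}^*\hat{U}$ also fixes a block-size mismatch that the paper's citation of Lemma~\ref{CS} leaves implicit.
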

\begin{proof}
\noindent The dual matrix in (\ref{F1}) is partitioned as $$
\begin{bmatrix}
  K & L \\ M & N
\end{bmatrix} 
=  \begin{bmatrix}
   K_1 & L_1 \\ M_1 & N_1 
\end{bmatrix} + \epsilon  \begin{bmatrix}
  K_2 & L_2 \\ M_2 & N_2  
\end{bmatrix},
$$
 with $K_1, K_2 \in \mathbb{C}^{r \times r}$, $L_1, L_2 \in \mathbb{C}^{r \times (n-r)}$, $M_1, M_2\in \mathbb{C}^{(n-r)\times r }$ and $N_1, N_2\in \mathbb{C}^{(n-r)\times (n-r) }$.
From (\ref{SS}), we directly obtain (\ref{S1}) and (\ref{S2}). Furthermore, we have
\begin{align*}
\hat{A}&=\hat{U}\begin{bmatrix}
\Sigma_1 K & \Sigma_1 L\\
\Sigma_2 M & \Sigma_2 N
\end{bmatrix}\hat{U}^*\\ 
&=\hat{U} \begin{bmatrix} 
  \Sigma_1(K_1+\epsilon K_2) & \Sigma_1(L_1+\epsilon L_2)  \\
   \Sigma_2(M_1+\epsilon M_2) & \Sigma_2(N_1+\epsilon N_2)
\end{bmatrix} \hat{U}^*\\
&=\hat{U} \left(\begin{bmatrix}
\Sigma_1 K_{1} & \Sigma_1 L_{1} \\
\Sigma_2 M_{1} & \Sigma_2  N_{1}
\end{bmatrix} +  \epsilon \begin{bmatrix}
\Sigma_1 K_{2} & \Sigma_1 L_{2} \\
\Sigma_2 M_{2} & \Sigma_2  N_{2}
\end{bmatrix}\right)\hat{U}^*,
\end{align*}
which then implies 
\begin{align*}
\hat{A}&=\hat{U} \left(\begin{bmatrix}
(\Sigma_{\mathrm{1s}} +\epsilon \Sigma_{\mathrm{1d}}) K_{1} & (\Sigma_{\mathrm{1s}} +\epsilon \Sigma_{\mathrm{1d}}) L_{1} \\
\epsilon \Sigma_{\mathrm{2d}} M_{1} & \epsilon \Sigma_{\mathrm{2d}}  N_{1}
\end{bmatrix} +  \epsilon \begin{bmatrix}
(\Sigma_{\mathrm{1s}} +\epsilon \Sigma_{\mathrm{1d}}) K_{2} & (\Sigma_{\mathrm{1s}} +\epsilon \Sigma_{\mathrm{1d}}) L_{2} \\
\epsilon \Sigma_{\mathrm{2d}} M_{2} & \epsilon \Sigma_{\mathrm{2d}}  N_{2}
\end{bmatrix}\right)\hat{U}^*\\
&=\hat{U} \left(\begin{bmatrix}
\Sigma_{\mathrm{1s}} K_{1} & \Sigma_{\mathrm{1s}} L_{1} \\ 0 & 0
\end{bmatrix} +\epsilon \begin{bmatrix}
\Sigma_{\mathrm{1d}} K_{1}+\Sigma_{\mathrm{1s}} K_{2} & \Sigma_{\mathrm{1d}} L_{1}+\Sigma_{\mathrm{1s}} L_{2} \\
\Sigma_{\mathrm{2d}} M_{1} & \Sigma_{\mathrm{2d}}  N_{1}
\end{bmatrix}\right)\hat{U}^*.
\end{align*}
\end{proof}

\begin{remark}\label{RR}
\noindent By \emph{(\ref{Evn})}, we have
\begin{align*}
\hat{A}_e  &= \hat{U} \begin{bmatrix} \Sigma_1 & 0 \\ 0 & 0 \end{bmatrix} \hat{V}^*
= \hat{U} \begin{bmatrix} \Sigma_1 & 0 \\ 0 & 0 \end{bmatrix} \hat{V}^*\hat{U}\hat{U}^*
=\hat{U} \begin{bmatrix} \Sigma_1 & 0 \\ 0 & 0 \end{bmatrix} \begin{bmatrix}
   K & L \\ M & N 
\end{bmatrix}\hat{U}^*\\
&= \hat{U}\begin{bmatrix} \Sigma_1 & 0 \\ 0 & 0 \end{bmatrix}\left( \begin{bmatrix}
   K_1 & L_1 \\ M_1 & N_1 
\end{bmatrix} + \epsilon \begin{bmatrix}
  K_2 & L_2 \\ M_2 & N_2  
\end{bmatrix} \right)\hat{U}^*\\
&=\hat{U} \left(\begin{bmatrix}
\Sigma_1 K_{1} & \Sigma_1 L_{1} \\
0 & 0
\end{bmatrix} + \epsilon \begin{bmatrix}
\Sigma_{\mathrm{1s}} K_{2} & \Sigma_{\mathrm{1s}} L_{2} \\
0 & 0
\end{bmatrix}\right) \hat{U}^*,
\end{align*}
where $K_1,K_2 \in \mathbb{C}^{r \times r}$, $L_1,L_2 \in \mathbb{C}^{r \times (n-r)}$  satisfying
 $$ K_1 K_1^*+L_1L_1^*=I_r, \quad K_1 K_2^* +K_2 K_1^*+L_1 L_2^*+L_2 L_1^*=0,$$
and $\Sigma_1 = \Sigma_{\mathrm{1s}} + \epsilon \Sigma_{\mathrm{1d}} = \operatorname{diag}(\mu_1, \ldots, \mu_r)$ with $\mu_1 \geq \cdots \geq \mu_r$ being positive appreciable dual numbers.
\end{remark}

\noindent From Remark~\ref{RR}, $\hat{A} = \hat{A}_{e}$ if and only if $\Sigma_{2\mathrm{d}} = 0$. 
Therefore, by recalling (\ref{AN}) and (\ref{E1}), this observation directly confirms Theorem 5.2 of (Cui and Qi \citeyear{Cui1}), which states that $\hat{A}^{N} = \hat{A}^{\dagger}$ under the same condition.
\noindent The following two examples demonstrate the D-H-S decomposition, as established by Theorem~\ref{LL1} and Lemma~\ref{CS}.

\begin{example}\label{ZZ2}
  Let 
\begin{align*}
  \hat{A} = \begin{bmatrix}
1 + \epsilon & -3\epsilon & 0 & -3\epsilon \\
3\epsilon & 2 + \epsilon & -4\epsilon & -2\epsilon \\
-\epsilon & 2\epsilon & 3\epsilon & 0 \\
\epsilon & -2\epsilon & 0 & 0
\end{bmatrix},
\end{align*}
then there exists $\hat{U}=\begin{bmatrix}
1 & -\epsilon & \epsilon & -\epsilon\\
\epsilon & 1 & -\epsilon & \epsilon\\
-\epsilon & \epsilon & 1 & -\epsilon\\
\epsilon & -\epsilon & \epsilon &1
\end{bmatrix} \in \mathbb{DC}_4^{U}$, 
$\hat{V}=\begin{bmatrix}
1 & \epsilon & 0 & 3\epsilon\\
-\epsilon & 1 & 2\epsilon  & \epsilon\\
0 & -2\epsilon & 1 & \epsilon\\
-3\epsilon & -\epsilon & -\epsilon & 1
\end{bmatrix} \in \mathbb{DC}_4^{U}$ such that
$$\Sigma = \hat{U}^*\hat{A}\hat{V}=\begin{bmatrix}
1 + \epsilon & 0 & 0 & 0\\
0 & 2 + \epsilon & 0 & 0\\
0 & 0 & 3\epsilon & 0 \\
0 & 0 & 0 & 0 
\end{bmatrix},\ \ \text{i.e.,} \ \  \Sigma_0= \begin{bmatrix}
  1 + \epsilon & 0  &0 \\
0 & 2 + \epsilon & 0\\
0 & 0 & 3\epsilon
\end{bmatrix}.$$
It follows that 
$$\hat{A}=\hat{U}\begin{bmatrix}
\Sigma_0 K_0 & \Sigma_0 L_0\\
0 & 0
\end{bmatrix}\hat{U}^*=\hat{U}
\begin{bmatrix}
1+\epsilon & -2\epsilon & \epsilon & -4\epsilon \\
4\epsilon & 2+\epsilon & -6\epsilon & 0 \\
0 & 0 & 3\epsilon & 0 \\
0 & 0 & 0 & 0
\end{bmatrix} \hat{U}^*,$$
with $K_0=\begin{bmatrix}
  1 & -2\epsilon & \epsilon \\
  2\epsilon & 1 & -3\epsilon \\
-\epsilon & 3\epsilon & 1
\end{bmatrix}$  and $L_0=\begin{bmatrix}
-4\epsilon \\ 0 \\ -2\epsilon
\end{bmatrix}$ 
satisfying  $K_0K_0^*+L_0L_0^*=I_3$.
\end{example}

\begin{example}\label{ZZ1}
  Let 
  $$\hat{A} = 
\begin{bmatrix}
1+\epsilon & -3\epsilon & 0 & -3\epsilon \\
3\epsilon & 2+\epsilon & -4\epsilon & -2\epsilon \\
-\epsilon & 2\epsilon & 0 & 0 \\
\epsilon & -2\epsilon & 0 & 0
\end{bmatrix},$$
then then there exists $\hat{U},\hat{V}$ of the same form as in \emph{Example~\ref{ZZ2}} such that
$$ \Sigma = \begin{bmatrix}
1 + \epsilon & 0 & 0 & 0\\
0 & 2 + \epsilon & 0 & 0 \\
0 & 0 & 0 & 0 \\
0 & 0 & 0 & 0
\end{bmatrix},
\ \ \text{i.e.,} \ \  \Sigma_1= \begin{bmatrix}
  1 + \epsilon & 0  \\
0 & 2 + \epsilon
\end{bmatrix},\ \  \Sigma_2= \begin{bmatrix} 
  0 & 0 \\ 0 & 0
  \end{bmatrix}.$$
Therefore,
$$\hat{A}=\hat{U}\begin{bmatrix}
\Sigma_1 K & \Sigma_1 L\\
\Sigma_2 M & \Sigma_2 N
\end{bmatrix}\hat{U}^*=\hat{U}
\begin{bmatrix}
1+\epsilon & -2\epsilon & \epsilon & -4\epsilon \\
4\epsilon & 2+\epsilon & -6\epsilon & 0 \\
0 & 0 & 0 & 0 \\
0 & 0 & 0 & 0
\end{bmatrix} \hat{U}^*,$$
where $K= \begin{bmatrix} 1 & -2\epsilon \\ 2\epsilon & 1 \end{bmatrix}$,
$L= \begin{bmatrix} \epsilon & -4\epsilon \\ -3\epsilon & 0 \end{bmatrix}$,
$M=\begin{bmatrix} -\epsilon & 3\epsilon \\ 4\epsilon & 0 \end{bmatrix}$ and
$N=\begin{bmatrix} 1 & -2\epsilon \\ 2\epsilon & 1 \end{bmatrix}$
satisfy \emph{(\ref{S1})} and \emph{(\ref{S2})}.
\end{example}

\section{Applications of the D-H-S Decomposition}\label{4}
In this section, we first use the D–H–S decomposition to derive expressions for the DMPGI, NDMPI, DGGI, and DCGI, and then apply these results to study dual generalized inverses.

\begin{lemma}\label{TG1}\emph{(Be and Mishra \citeyear{Be})}
  Let $\hat{A} \in \mathbb{DC}^{n \times n}$ with $r=\operatorname{Rank}(\hat{A})$.
Then there exist $\hat{U} \in \mathbb{DC}_n^{U}$ such that
\begin{align*}
\hat{A}^{N} &=\hat{U}\begin{bmatrix}
  K^* \Sigma_1^{-1}  &  0 \\ L^* \Sigma_1^{-1} & 0
\end{bmatrix}\hat{U}^*,
\end{align*}
where $K \in \mathbb{DC}^{r \times r}$, $L \in \mathbb{DC}^{r \times (n-r)}$ satisfying $KK^*+LL^*=I_r$, 
and $\Sigma_1 = \Sigma_{\mathrm{1s}} + \epsilon \Sigma_{\mathrm{1d}} = \operatorname{diag}(\mu_1, \ldots, \mu_r)$ with $\mu_1 \geq \cdots \geq \mu_r$ being positive appreciable dual numbers.
\end{lemma}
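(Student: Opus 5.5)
We will derive the formula directly from the dual-SVD representation of $\hat{A}^N$ in Lemma~\ref{LM1}, moved into the unitary basis $\hat{U}$ that gives the D-H-S form of Lemma~\ref{CS}. Take the dual-SVD $\hat{A}=\hat{U}\begin{bmatrix}\Sigma_1&0\\0&\Sigma_2\end{bmatrix}\hat{V}^*$ from Lemma~\ref{L0}, with the partition of Remark~\ref{RE}, and use the same $\hat{U}$. Set
\[
\hat{V}^*\hat{U}=\begin{bmatrix}K&L\\M&N\end{bmatrix}\in\mathbb{DC}_n^{U},
\]
as in the proofs of Theorem~\ref{LL1} and Remark~\ref{RR}. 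Since $\hat{V}^*\hat{U}$ is dual unitary, its block rows are orthonormal, which gives $KK^*+LL^*=I_r$ as in (\ref{SS}). Its inverse is its conjugate transpose:
\[
\hat{U}^*\hat{V}=(\hat{V}^*\hat{U})^*=\begin{bmatrix}K^*&M^*\\L^*&N^*\end{bmatrix}.
\]

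Next, apply Lemma~\ref{LM1}, which gives $\hat{A}^N=\hat{V}\begin{bmatrix}\Sigma_1^{-1}&0\\0&0\end{bmatrix}\hat{U}^*$. Here $\Sigma_1$ is invertible because its diagonal entries are appreciable. Inserting $\hat{U}\hat{U}^*=I_n$ on the left yields
\[
\hat{A}^N=\hat{U}(\hat{U}^*\hat{V})\begin{bmatrix}\Sigma_1^{-1}&0\\0&0\end{bmatrix}\hat{U}^*
=\hat{U}\begin{bmatrix}K^*&M^*\\L^*&N^*\end{bmatrix}\begin{bmatrix}\Sigma_1^{-1}&0\\0&0\end{bmatrix}\hat{U}^*
=\hat{U}\begin{bmatrix}K^*\Sigma_1^{-1}&0\\L^*\Sigma_1^{-1}&0\end{bmatrix}\hat{U}^*,
\]
which is exactly the claimed expression.

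The only delicate point is bookkeeping. The statement's $r$ must be the appreciable rank, i.e.\ the size of $\Sigma_1$. We must also make sure the $K,L$ here are the same blocks as in Lemma~\ref{CS}, namely the top block row of $\hat{V}^*\hat{U}$. This holds because Lemma~\ref{CS} is built from the same dual-SVD, as seen in Remark~\ref{RR}. As a sanity check, we could verify the four equations in (\ref{E1}) with $\hat{A}_e$ from Remark~\ref{RR}, using $KK^*+LL^*=I_r$ and $KM^*+LN^*=0$. Uniqueness of the NDMPI then makes this verification an alternative proof, but it is not needed.
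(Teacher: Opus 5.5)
Your proof is correct. The paper does not prove this lemma itself (it is cited from Be and Mishra), but your argument is exactly the device the paper uses in Theorem~\ref{LL1} and Remark~\ref{RR}: partition the dual unitary matrix $\hat{V}^*\hat{U}$ into blocks $K,L,M,N$ and insert $\hat{U}\hat{U}^*$ into the dual-SVD form of $\hat{A}^N$ from Lemma~\ref{LM1}. You are also right that the statement's $r$ must be read as the appreciable rank, i.e.\ the size of $\Sigma_1$.
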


\begin{theorem}\label{TG}
  Let $\hat{A} \in \mathbb{DC}^{n \times n}$ with $r=\operatorname{Rank}(\hat{A})$.
Suppose its dual-SVD is derived from \emph{Lemma~\ref{LM1}} for $\hat{U} \in \mathbb{DQ}_n^{U}$ such that
\begin{align}
\hat{A}^{N} 
&=\hat{U}\left(\begin{bmatrix}
K_{1}^* \Sigma_1^{-1}  &  0 \\ L_{1}^* \Sigma_1^{-1} & 0
\end{bmatrix} + \begin{bmatrix}
K_{2}^*\Sigma_1^{-1} & 0  \\ L_{2}^*\Sigma_1^{-1} & 0
\end{bmatrix} \epsilon\right)\hat{U}^* \notag\\ 
&=  \hat{U}\left(\begin{bmatrix}
K_{1}^* \Sigma_{\mathrm{1s}}^{-1}  &  0 \\
L_{1}^* \Sigma_{\mathrm{1s}}^{-1} & 0
\end{bmatrix} + \begin{bmatrix}
K_{2}^*\Sigma_{\mathrm{1s}}^{-1}-K_{1}^* \Sigma_{\mathrm{1s}}^{-2}\Sigma_{\mathrm{1d}}   & 0  \\
L_{2}^*\Sigma_{\mathrm{1s}}^{-1}-L_{1}^* \Sigma_{\mathrm{1s}}^{-2}\Sigma_{\mathrm{1d}}   & 0
\end{bmatrix} \epsilon\right)\hat{U}^*, \notag
\end{align} 
where $K_1,K_2 \in \mathbb{C}^{r \times r}$, $L_1,L_2 \in \mathbb{C}^{r \times (n-r)}$  satisfying
 $$ K_1 K_1^*+L_1L_1^*=I_r, \quad K_1 K_2^* +K_2 K_1^*+L_1 L_2^*+L_2 L_1^*=0,$$
and $\Sigma_1 = \Sigma_{\mathrm{1s}} + \epsilon \Sigma_{\mathrm{1d}} = \operatorname{diag}(\mu_1, \ldots, \mu_r)$ with $\mu_1 \geq \cdots \geq \mu_r$ being positive appreciable dual numbers.
\end{theorem}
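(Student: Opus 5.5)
Starting from the dual-SVD representation of $\hat{A}^N$ in Lemma~\ref{LM1}, we insert $\hat{U}^*\hat{U}=I_n$ and use the D-H-S block $\hat{V}^*\hat{U}$ from Lemma~\ref{CS}. Since $\hat{V}$ and $\hat{U}$ are dual unitary,
\[
\hat{A}^N=\hat{V}\begin{bmatrix}\Sigma_1^{-1}&0\\0&0\end{bmatrix}\hat{U}^*
=\hat{U}(\hat{V}^*\hat{U})^*\begin{bmatrix}\Sigma_1^{-1}&0\\0&0\end{bmatrix}\hat{U}^*.
\]
As in the proof of Theorem~\ref{LL1} and Remark~\ref{RR}, we write $\hat{V}^*\hat{U}=\begin{bmatrix}K&L\\M&N\end{bmatrix}$, so that $(\hat{V}^*\hat{U})^*=\begin{bmatrix}K^*&M^*\\L^*&N^*\end{bmatrix}$. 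Multiplying by the block diagonal factor kills the second block column and gives
\[
\hat{A}^N=\hat{U}\begin{bmatrix}K^*\Sigma_1^{-1}&0\\L^*\Sigma_1^{-1}&0\end{bmatrix}\hat{U}^*,
\]
which is Lemma~\ref{TG1}. The relations $KK^*+LL^*=I_r$ come from $(\hat{V}^*\hat{U})(\hat{V}^*\hat{U})^*=I_n$.

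Next we substitute the splitting $K=K_1+\epsilon K_2$ and $L=L_1+\epsilon L_2$ from Theorem~\ref{L1}, with $K_i,L_i$ complex. Then $K^*=K_1^*+\epsilon K_2^*$, so
\[
K^*\Sigma_1^{-1}=K_1^*\Sigma_1^{-1}+\epsilon K_2^*\Sigma_1^{-1},
\]
and likewise for $L$. This yields the first displayed form. The constraints
\[
K_1K_1^*+L_1L_1^*=I_r,\qquad K_1K_2^*+K_2K_1^*+L_1L_2^*+L_2L_1^*=0
\]
follow by separating the standard and infinitesimal parts of $KK^*+LL^*=I_r$, exactly as in (\ref{S1}).

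For the second form we expand $\Sigma_1^{-1}$ using the inverse formula for invertible dual matrices from the introduction. Since $\Sigma_{\mathrm{1s}}$ is invertible (its entries are the positive standard parts of appreciable $\mu_i$) and diagonal,
\[
\Sigma_1^{-1}=\Sigma_{\mathrm{1s}}^{-1}-\epsilon\Sigma_{\mathrm{1s}}^{-1}\Sigma_{\mathrm{1d}}\Sigma_{\mathrm{1s}}^{-1}=\Sigma_{\mathrm{1s}}^{-1}-\epsilon\Sigma_{\mathrm{1s}}^{-2}\Sigma_{\mathrm{1d}}.
\]
Hence
\[
K_1^*\Sigma_1^{-1}+\epsilon K_2^*\Sigma_1^{-1}=K_1^*\Sigma_{\mathrm{1s}}^{-1}+\epsilon\bigl(K_2^*\Sigma_{\mathrm{1s}}^{-1}-K_1^*\Sigma_{\mathrm{1s}}^{-2}\Sigma_{\mathrm{1d}}\bigr),
\]
using $\epsilon^2=0$ to drop the $\epsilon K_2^*\cdot\epsilon$ term. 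The $L$ blocks are handled identically.

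There is no genuine obstacle here; the only care needed is bookkeeping. First, the $\hat{U}$ in the statement must be the same dual unitary matrix as in the D-H-S decomposition of $\hat{A}$ (Lemma~\ref{CS}), so the blocks $K,L$ must be taken from $\hat{V}^*\hat{U}$ for the same dual-SVD pair. Second, we need the ordering of $\epsilon$ with matrix factors, which is harmless since $\epsilon$ is central.
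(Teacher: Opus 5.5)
Your proof is correct and follows essentially the same route as the paper. Like the paper, you start from $\hat{A}^N=\hat{U}\begin{bmatrix}K^*\Sigma_1^{-1}&0\\L^*\Sigma_1^{-1}&0\end{bmatrix}\hat{U}^*$, split $K=K_1+\epsilon K_2$ and $L=L_1+\epsilon L_2$, read the constraints off the standard and infinitesimal parts of $KK^*+LL^*=I_r$, and expand $\Sigma_1^{-1}=\Sigma_{\mathrm{1s}}^{-1}-\epsilon\Sigma_{\mathrm{1s}}^{-2}\Sigma_{\mathrm{1d}}$. The only difference is that you derive this starting representation (Lemma~\ref{TG1}) from Lemma~\ref{LM1} via $\hat{V}=\hat{U}(\hat{V}^*\hat{U})^*$, whereas the paper cites it from Be and Mishra.
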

\begin{proof}
Following Lemma~\ref{TG1}, we assume that
 $$ K=K_1+\epsilon K_2 \in \mathbb{DC}^{r \times r}, \quad L=L_1+\epsilon L_2 \in \mathbb{DC}^{r \times (n-r)},$$
where $K_1,K_2 \in \mathbb{C}^{r \times r}$ and $L_1,L_2 \in \mathbb{C}^{r \times (n-r)}$.
Since $KK^*+LL^*=I_r$, it follows that 
$$ K_1 K_1^*+L_1L_1^*=I_r, \quad K_1 K_2^* +K_2 K_1^*+L_1 L_2^*+L_2 L_1^*=0.$$
Consequently, we have
\begin{align*}
 \hat{A}^N &=\hat{U}\begin{bmatrix}
  K^* \Sigma_1^{-1}  &  0 \\ L^* \Sigma_1^{-1} & 0
\end{bmatrix}\hat{U}^*
=\hat{U}\begin{bmatrix}
   (K_1^*+\epsilon K_2^*)\Sigma_1^{-1} & 0 \\
 (L_1^*+\epsilon L_2^*)\Sigma_1^{-1} & 0 
\end{bmatrix} \hat{U}^*
= \hat{U}\left(\begin{bmatrix}
K_{1}^* \Sigma_1^{-1}  &  0 \\ L_{1}^* \Sigma_1^{-1} & 0
\end{bmatrix} + \epsilon \begin{bmatrix}
K_{2}^*\Sigma_1^{-1} & 0  \\ L_{2}^*\Sigma_1^{-1} & 0
\end{bmatrix}\right)\hat{U}^*,
\end{align*}
which further leads to
\begin{align*}
 \hat{A}^N &= \hat{U}\left(\begin{bmatrix}
K_{1}^* (\Sigma_{\mathrm{1s}}^{-1}-\epsilon\Sigma_{\mathrm{1s}}^{-2}\Sigma_{\mathrm{1d}})  &  0 \\ L_{1}^* (\Sigma_{\mathrm{1s}}^{-1}-\epsilon\Sigma_{\mathrm{1s}}^{-2}\Sigma_{\mathrm{1d}}) & 0
\end{bmatrix} + \epsilon\begin{bmatrix}
  K_{2}^*(\Sigma_{\mathrm{1s}}^{-1}-\Sigma_{\mathrm{1s}}^{-2}\Sigma_{\mathrm{1d}}\epsilon)  & 0  \\ L_{2}^*(\Sigma_{\mathrm{1s}}^{-1}-\Sigma_{\mathrm{1s}}^{-2}\Sigma_{\mathrm{1d}}\epsilon) & 0
\end{bmatrix} \right)\hat{U}^*\\
 &= \hat{U}\left(\begin{bmatrix}
K_{1}^* \Sigma_{\mathrm{1s}}^{-1}  &  0 \\
L_{1}^* \Sigma_{\mathrm{1s}}^{-1} & 0
\end{bmatrix} + \begin{bmatrix}
K_{2}^*\Sigma_{\mathrm{1s}}^{-1}-K_{1}^* \Sigma_{\mathrm{1s}}^{-2}\Sigma_{\mathrm{1d}}   & 0  \\
L_{2}^*\Sigma_{\mathrm{1s}}^{-1}-L_{1}^* \Sigma_{\mathrm{1s}}^{-2}\Sigma_{\mathrm{1d}}   & 0
\end{bmatrix} \epsilon\right)\hat{U}^*.
\end{align*}
\end{proof}

\begin{corollary}\label{C1}
  Let \( \hat{A} \in \mathbb{DC}^{n \times n} \) with $r=\operatorname{Rank}(\hat{A})$. Then the DMPGI $\hat{A}^{\dagger}$ exists, and exists $\hat{U} \in \mathbb{DC}_n^{U}$ such that
\begin{align}
\hat{A}^{\dagger} &=\hat{U}\begin{bmatrix}
  K^* \Sigma_1^{-1}  &  0 \\ L^* \Sigma_1^{-1} & 0
\end{bmatrix}\hat{U}^* \label{MP4}\\
&= \hat{U}\left(\begin{bmatrix}
K_{1}^* \Sigma_1^{-1}  &  0 \\
L_{1}^* \Sigma_1^{-1} & 0
\end{bmatrix} + \epsilon \begin{bmatrix}
K_{2}^*\Sigma_1^{-1} & 0  \\
L_{2}^*\Sigma_1^{-1} & 0
\end{bmatrix} \right)\hat{U}^*  \notag\\ 
&=\hat{U}\left(\begin{bmatrix}
K_{1}^* \Sigma_{\mathrm{1s}}^{-1}  &  0 \\
L_{1}^* \Sigma_{\mathrm{1s}}^{-1} & 0
\end{bmatrix} +  \epsilon \begin{bmatrix}
 K_{2}^*\Sigma_{\mathrm{1s}}^{-1} -K_{1}^* \Sigma_{\mathrm{1s}}^{-2}\Sigma_{\mathrm{1d}}  & 0  \\
L_{2}^*\Sigma_{\mathrm{1s}}^{-1}-L_{1}^* \Sigma_{\mathrm{1s}}^{-2}\Sigma_{\mathrm{1d}}  & 0
\end{bmatrix}\right)\hat{U}^*,  \notag
\end{align}
where $K=K_1+\epsilon K_2 \in \mathbb{DC}^{r \times r}$, $L=L_1+\epsilon L_2 \in \mathbb{DC}^{r \times (n-r)}$ satisfy 
$KK^* + LL^* = I_r$, 
and $\Sigma_1 = \Sigma_{\mathrm{1s}} + \epsilon \Sigma_{\mathrm{1d}} = \operatorname{diag}(\mu_1, \ldots, \mu_r)$ with $\mu_1 \geq \cdots \geq \mu_r$ being positive appreciable dual numbers.
\end{corollary}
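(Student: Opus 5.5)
The corollary is silent on one point that matters: it is only true when $\hat{A}=\hat{A}_e$, i.e.\ $\Sigma_{2\mathrm{d}}=0$, which by Lemma~\ref{TJ1}(d),(e) is exactly when $\hat{A}^\dagger$ exists. I read the hypothesis $r=\operatorname{Rank}(\hat{A})$ as $\operatorname{ARank}(\hat{A})=\operatorname{Rank}(\hat{A})$ (so $\Sigma_2=0$), and I do not try to prove existence for an arbitrary dual matrix.

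\emph{Existence.} Since $\operatorname{ARank}(\hat{A})=\operatorname{Rank}(\hat{A})$, Lemma~\ref{TJ1}(e)$\Rightarrow$(a) gives that $\hat{A}^\dagger$ exists, and (d) gives $\hat{A}=\hat{A}_e$. Equivalently, in the decomposition of Theorem~\ref{L1} the block $\Sigma_2=\epsilon\Sigma_{2\mathrm{d}}$ vanishes, so by Remark~\ref{RR}
\begin{align*}
\hat{A}=\hat{U}\begin{bmatrix}\Sigma_1 K & \Sigma_1 L\\ 0 & 0\end{bmatrix}\hat{U}^*.
\end{align*}

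\emph{Identifying the inverse.} Comparing (\ref{AN}) with (\ref{E1}), the two systems coincide once $\hat{A}=\hat{A}_e$. So $\hat{A}^N$ satisfies all four Penrose equations, and $\hat{A}^\dagger=\hat{A}^N$; this is also the observation after Remark~\ref{RR}. Lemma~\ref{TG1} then gives (\ref{MP4}) with the same $\hat{U}$, $K$, $L$ as in Lemma~\ref{CS}. To be safe, one can also verify (\ref{AN}) directly with $\hat{X}=\hat{U}\begin{bmatrix}K^*\Sigma_1^{-1}&0\\ L^*\Sigma_1^{-1}&0\end{bmatrix}\hat{U}^*$. Using $KK^*+LL^*=I_r$,
\begin{align*}
\hat{A}\hat{X}=\hat{U}\begin{bmatrix}I_r&0\\0&0\end{bmatrix}\hat{U}^*, \qquad
\hat{X}\hat{A}=\hat{U}\begin{bmatrix}K^*K&K^*L\\ L^*K&L^*L\end{bmatrix}\hat{U}^*.
\end{align*}
Both are Hermitian, and $\hat{A}\hat{X}\hat{A}=\hat{A}$ and $\hat{X}\hat{A}\hat{X}=\hat{X}$ follow immediately. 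Uniqueness of the DMPGI then gives $\hat{X}=\hat{A}^\dagger$.

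\emph{The expanded forms.} Write $K=K_1+\epsilon K_2$ and $L=L_1+\epsilon L_2$ and substitute, exactly as in the proof of Theorem~\ref{TG}. This gives the second line of (\ref{MP4}). For the third line, use $\Sigma_1^{-1}=\Sigma_{\mathrm{1s}}^{-1}-\epsilon\Sigma_{\mathrm{1s}}^{-2}\Sigma_{\mathrm{1d}}$, valid because $\Sigma_1$ is diagonal, and drop the $\epsilon^2$ terms.

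The main obstacle is the first step: making the implicit hypothesis explicit and confirming that $\Sigma_2=0$ is what makes the Penrose equation $\hat{A}\hat{X}\hat{A}=\hat{A}$ hold rather than only $\hat{A}\hat{X}\hat{A}=\hat{A}_e$. After that, everything is routine block multiplication.
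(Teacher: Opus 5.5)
Your proof is correct and follows the paper's route. The paper also silently assumes $\Sigma_2=0$, identifies $\hat{A}^{\dagger}=\hat{A}^N$ in that case (citing Cui--Qi and Li--Wang, where you instead compare (\ref{AN}) with (\ref{E1})), and reads off the three expressions from Lemma~\ref{TG1} and Theorem~\ref{TG}. Your direct check of the four Penrose equations is a useful safeguard, and you are right to make the missing hypothesis $\operatorname{ARank}(\hat{A})=\operatorname{Rank}(\hat{A})$ explicit, since the statement as written fails for matrices like the one in Example~\ref{ZZ2}.
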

\begin{proof}
As established by (Cui and Qi \citeyear{Cui1}) and (Li and Wang \citeyear{Li1}), for the form of  $\hat{A} $ in Lemma~\ref{CS}, its DMPGI $\hat{A}^{\dagger}$ exists and  satisfies $\hat{A}^{\dagger}=\hat{A}^N$
when $\Sigma_2=0$ holds.
This fact, combined with Lemma~\ref{TG1} and Theorem~\ref{TG}, completes the proof. It is therefore omitted.
\end{proof}

\noindent According to the above theorems, whether $\Sigma_2 = 0$ determines the existence of the DMPGI. Moreover, if $\Sigma_2 = 0$, the DMPGI equals the NDMPI. 
Below, we further illustrate this point using Example~\ref{ZZ2} and Example~\ref{ZZ1}, and apply Theorem~\ref{TG} and Corollary~\ref{C1} to compute the DMPGI and NDMPI.

\begin{example}
 For  $\hat{A}$ in \emph{Example~\ref{ZZ2}},  we have 
  $$\Sigma_1=\mathrm{diag} (1+\epsilon, \ 2+\epsilon) \ \ \text{and} \ \ \Sigma_2=\mathrm{diag} (3\epsilon,\ 0 ).$$
 When $\Sigma_2\neq 0$, $\hat{A}^{\dagger}$ does not exist, and \emph{Lemma~\ref{TG1}} yields
$$\hat{A}^{N} =\hat{U}\begin{bmatrix}
  K^* \Sigma_1^{-1}  &  0 \\ L^* \Sigma_1^{-1} & 0
\end{bmatrix}\hat{U}^*= \hat{U}\begin{bmatrix}
1-\epsilon & \epsilon & 0 & 0\\
-2\epsilon & \frac{1}{2}-\frac{\epsilon}{4} & 0 & 0\\
\epsilon & -\frac{3\epsilon}{2} & 0 & 0 \\
-4\epsilon & 0 & 0 & 0 
\end{bmatrix}\hat{U}^*
=\begin{bmatrix}
1-\epsilon & \frac{3\epsilon}{2} & -\epsilon & \epsilon \\
-\frac{3\epsilon}{2} & \frac{1}{2}-\frac{\epsilon}{4} & \frac{\epsilon}{2} & -\frac{\epsilon}{2} \\
0 & -\epsilon & 0 & 0 \\
-3\epsilon & -\frac{\epsilon}{2} & 0 & 0
\end{bmatrix}.$$
\end{example}

\begin{example}
  Let $\hat{A}$ be \emph{Example~\ref{ZZ1}}. Then
  $$\Sigma_1=\mathrm{diag} (1+\epsilon, \ 2+\epsilon) \ \ \text{and} \ \ \Sigma_2=\mathrm{diag} (0,\ 0 ).$$
  For the case $\Sigma_2=0$, $\hat{A}^{\dagger}$ exists.
  Then by \emph{Lemma~\ref{TG1}} and \emph{Corollary~\ref{C1}}, we have
\begin{align*}
\hat{A}^{\dagger}=\hat{A}^{N} =\hat{U}\begin{bmatrix}
  K^* \Sigma_1^{-1}  &  0 \\ L^* \Sigma_1^{-1} & 0
\end{bmatrix}\hat{U}^*=\hat{U}\begin{bmatrix}
1-\epsilon & \epsilon & 0 & 0\\
-2\epsilon & \frac{1}{2}-\frac{\epsilon}{4} & 0 & 0\\
\epsilon & -\frac{3\epsilon}{2} & 0 & 0 \\
-4\epsilon & 0 & 0 & 0 
\end{bmatrix}\hat{U}^*
=\begin{bmatrix}
1-\epsilon & \frac{3\epsilon}{2} & -\epsilon & \epsilon \\
-\frac{3\epsilon}{2} & \frac{1}{2}-\frac{\epsilon}{4} & \frac{\epsilon}{2} & -\frac{\epsilon}{2} \\
0 & -\epsilon & 0 & 0 \\
-3\epsilon & -\frac{\epsilon}{2} & 0 & 0
\end{bmatrix}.
\end{align*}
\end{example}

\noindent Below we state the necessary and sufficient condition for the dual index of a dual matrix to be 1.
\begin{theorem}\label{J1}
  Let $\hat{A} \in \mathbb{DC}^{n\times n}$ have the forms  in \emph{(\ref{F0})} and \emph{(\ref{F1})} of \emph{Theorem~\ref{LL1}} and \emph{Lemma~\ref{CS}}, respectively.  Then the following statements are equivalent:\\
\noindent $(a)$ $\operatorname{Ind}(\hat{A})=1$; \\
\noindent $(b)$ $\operatorname{AInd}(\hat{A})=1$ and $\hat{A}^{\dagger}$ exists;\\
\noindent $(c)$ $\hat{A}^{\scalebox{0.5}{\#}}$ exists;\\
\noindent $(d)$ $\hat{A}^{\tiny\textcircled{\#}}$ exists;\\
\noindent $(e)$ $K_1$ is invertible and $\Sigma_{\mathrm{2d}}=0$;\\
\noindent $(f)$ $K$ is invertible and $\Sigma_{\mathrm{2d}}=0$.
\end{theorem}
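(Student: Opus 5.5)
The plan is to route everything through the D-H-S forms and close the cycle $(a)\Rightarrow(e)\Leftrightarrow(f)\Rightarrow(b)\Rightarrow(c)\Rightarrow(d)\Rightarrow(a)$. Several links come from results already stated. $(b)\Rightarrow(c)$ is the lemma of Zhong and Zhang, since $\operatorname{AInd}(\hat{A})=1$ together with existence of $\hat{A}^\dagger$ gives condition $(d)$ there. $(c)\Rightarrow(d)$ follows by writing $\hat{A}^{\tiny\textcircled{\#}}=\hat{A}^{\scalebox{0.5}{\#}}\hat{A}\hat{A}^{\dagger}$; we check the three equations in (\ref{En}) directly. Existence of $\hat{A}^\dagger$ comes from $(b)$, which must already hold, or we invoke the Zhong--Zhang equivalence again. $(d)\Rightarrow(a)$ uses $\hat{A}\hat{X}\hat{A}=\hat{A}$ and $\hat{A}\hat{X}^2=\hat{X}$. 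These give $\hat{A}=\hat{A}\hat{X}\hat{A}=\hat{A}^2\hat{X}^2\hat{A}$, so $\mathcal{R}(\hat{A})\subseteq\mathcal{R}(\hat{A}^2)$, and hence $\operatorname{Ind}(\hat{A})=1$. The reverse inclusion is trivial.

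For $(e)\Leftrightarrow(f)$: a dual matrix $K=K_1+\epsilon K_2$ is invertible iff its standard part $K_1$ is invertible, as recalled in the introduction.

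For $(f)\Rightarrow(b)$: when $\Sigma_{2\mathrm{d}}=0$ the form (\ref{F1}) reduces to $\hat{A}=\hat{U}\begin{bmatrix}\Sigma_1K&\Sigma_1L\\0&0\end{bmatrix}\hat{U}^*$, so $\hat{A}=\hat{A}_e$ and $\hat{A}^\dagger$ exists by Lemma~\ref{TJ1} (or Corollary~\ref{C1}). By (\ref{ab}) the standard part is $A_s=U_s\begin{bmatrix}\Sigma_{1s}K_1&\Sigma_{1s}L_1\\0&0\end{bmatrix}U_s^*$. This is a classical H-S form, and in the complex case $\operatorname{Ind}(A_s)=1$ iff $K_1$ is invertible. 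Hence $\operatorname{AInd}(\hat{A})=1$.

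The main obstacle is $(a)\Rightarrow(e)$, namely extracting $\Sigma_{2\mathrm{d}}=0$ from $\mathcal{R}(\hat{A}^2)=\mathcal{R}(\hat{A})$. I would write $\hat{A}=\hat{A}^2\hat{Y}$ for some dual $\hat{Y}$ and conjugate by $\hat{U}$, using the block form (\ref{F1}) with $\hat{Y}$ partitioned conformally. Comparing standard parts gives $A_s=A_s^2Y_s$, so $\operatorname{Ind}(A_s)=1$ and therefore $K_1$ is invertible by the complex H-S criterion. Next, the second block row of $\hat{A}^2$ is $\Sigma_2[MΣ_1K+N\Sigma_2M,\ M\Sigma_1L+N\Sigma_2N]$, which is $\epsilon\Sigma_{2\mathrm{d}}M_1\Sigma_{1s}[K_1,\ L_1]$ at first order. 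The second block row of $\hat{A}$ is $\epsilon\Sigma_{2\mathrm{d}}[M_1,N_1]$. Matching first-order parts in $\hat{A}=\hat{A}^2\hat{Y}$ yields
\[
\Sigma_{2\mathrm{d}}[M_1,\ N_1]=\Sigma_{2\mathrm{d}}M_1\Sigma_{1s}[K_1,\ L_1]Y_s' ,
\]
where $Y_s'=U_s^*Y_sU_s$. Multiplying on the right by $[M_1,N_1]^*$ and using (\ref{S2}), the left side becomes $\Sigma_{2\mathrm{d}}$. The right side becomes $\Sigma_{2\mathrm{d}}M_1 Z$ for some $Z$. We also have $[M_1\ N_1]$ orthogonal to the rows of $[K_1\ L_1]$, and $K_1$ is invertible. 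The relation $K_1M_1^*+L_1N_1^*=0$ then gives $M_1^*=-K_1^{-1}L_1N_1^*$. From the standard part, $Y_s'$ maps into the range spanned by $[K_1\ L_1]^*$ when we restrict to the relevant equation, and this orthogonality should force the right side to vanish, giving $\Sigma_{2\mathrm{d}}=0$. Getting this last vanishing argument right is the step I would check most carefully. If it resists, I would instead argue via Lemma~\ref{TJ1}(b): from $\operatorname{Ind}(\hat{A})=1$, derive $(I-A_sA_s^{\scalebox{0.5}{\#}})A_d(I-A_sA_s^{\scalebox{0.5}{\#}})=0$ as in Zhong--Zhang, which gives $(b)$, and then read off $\Sigma_{2\mathrm{d}}=0$ from $\hat{A}=\hat{A}_e$ and Remark~\ref{RR}.
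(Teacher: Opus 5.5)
Your proof is correct. For the key implication it takes a different route from the paper.

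\textbf{How the two routes differ.} The paper proves nothing about $(a)$--$(d)$ directly. It cites Wang and Gao for their equivalence, notes $(e)\Leftrightarrow(f)$, and proves only $(b)\Leftrightarrow(e)$. There, $\Sigma_{\mathrm{2d}}=0$ is read off from the existence of $\hat{A}^{\dagger}$ via Lemma~\ref{TJ1}(d) and Remark~\ref{RR}. Invertibility of $K_1$ comes from comparing $\operatorname{Rank}(A_s^2)$ with $\operatorname{Rank}(A_s)$ in the classical H--S form. So the substantive fact, that index one forces the DMPGI to exist, is outsourced to the literature.

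Your $(a)\Rightarrow(e)$ derives that fact from the decomposition itself. The rows of $\Sigma_{\mathrm{2d}}[M_1\ N_1]$ would have to lie in the row space of $[K_1\ L_1]$, to which they are orthogonal. This is what your route buys. You could go further: when $\Sigma_{\mathrm{2d}}=0$ and $K$ is invertible, one checks directly that the matrices displayed in (\ref{AGG}) and (\ref{ACG}) satisfy (\ref{E3}) and (\ref{En}). With that check, the whole theorem no longer depends on the cited equivalences. The paper's route is simply shorter.

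\textbf{The step you flagged.} It does close, but not for the reason you give: nothing forces $\mathcal{R}(Y_s')\subseteq\mathcal{R}([K_1\ L_1]^*)$. What you need is the first block row of the standard part of $\hat{U}^*\hat{A}\hat{U}=(\hat{U}^*\hat{A}\hat{U})^2\,\hat{U}^*\hat{Y}\hat{U}$, namely $[K_1\ L_1]=K_1\Sigma_{\mathrm{1s}}[K_1\ L_1]Y_s'$. Hence $\Sigma_{\mathrm{1s}}[K_1\ L_1]Y_s'=K_1^{-1}[K_1\ L_1]$. Your infinitesimal identity then becomes $\Sigma_{\mathrm{2d}}[M_1\ N_1]=\Sigma_{\mathrm{2d}}M_1K_1^{-1}[K_1\ L_1]$. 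Right-multiplying by $[M_1\ N_1]^*$ and using (\ref{S2}) gives $\Sigma_{\mathrm{2d}}=\Sigma_{\mathrm{2d}}M_1K_1^{-1}(K_1M_1^*+L_1N_1^*)=0$.

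\textbf{A small repair in $(c)\Rightarrow(d)$.} Inside the cycle you cannot appeal to $(b)$. Either of these fixes works:
\begin{itemize}
\item Note that the standard parts of (\ref{E3}) make $X_s$ the group inverse of $A_s$. The Zhong--Zhang lemma then applies and yields $\hat{A}^{\dagger}$.
\item Prove $(c)\Rightarrow(a)$ directly from $\hat{A}=\hat{A}\hat{X}\hat{A}=\hat{A}^2\hat{X}$, and obtain $(d)$ from $(b)$ via $\hat{A}^{\scalebox{0.5}{\#}}\hat{A}\hat{A}^{\dagger}$.
\end{itemize}
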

\begin{proof}
\noindent  We only need to prove $(b) \Leftrightarrow  (e)$. It follows from (Wang and Gao \citeyear{Wang1}) that conditions $(a)-(d)$ are equivalent, and $(e) \Leftrightarrow (f)$ holds due to the invertibility of \(K_1\), which is equivalent to the invertibility of \(K\).\\
\noindent $(b) \Leftrightarrow  (e):$  
We first consider the necessary condition.
From $(b)$, we have 
  $$\operatorname{Rank}(A_s^2)=\operatorname{Rank}(A_s) \quad \text{and} \quad \Sigma_{\mathrm{2d}}=0.$$
  Let $\hat{U} = U_s + \epsilon U_d$, where $U_s,U_d \in \mathbb{C}^{n \times n}$. Since $\hat{U} \in \mathbb{DC}_n^U$, a straightforward calculation shows that $U_s \in \mathbb{C}_n^U$.
Therefore, combined with the invertibility of $\Sigma_{\mathrm{1s}}$, Theorem~\ref{L1} yields
\begin{align*}
\operatorname{Rank}(A_s)&=\operatorname{Rank}\left(U_s\begin{bmatrix}
\Sigma_{\mathrm{1s}} K_{1} & \Sigma_{\mathrm{1s}} L_{1} \\ 0 & 0
\end{bmatrix}U_s^*\right) 
  =\operatorname{Rank}\left(\begin{bmatrix}
\Sigma_{\mathrm{1s}} K_{1} & \Sigma_{\mathrm{1s}} L_{1} \\ 0 & 0
\end{bmatrix}\right) 
=\operatorname{Rank}\left(\begin{bmatrix}
 K_{1} & L_{1} \\ 0 & 0
\end{bmatrix}\right), \\
\operatorname{Rank}(A_s^2) &= \operatorname{Rank}\left(U_s\begin{bmatrix}
(\Sigma_{\mathrm{1s}} K_{1})^2 & \Sigma_{\mathrm{1s}} K_{1}\Sigma_{\mathrm{1s}} L_{1} \\ 0 & 0
\end{bmatrix}U_s^*\right) 
= \operatorname{Rank}\left(\begin{bmatrix}
(\Sigma_{\mathrm{1s}} K_{1})^2 & \Sigma_{\mathrm{1s}} K_{1}\Sigma_{\mathrm{1s}} L_{1} \\ 0 & 0
\end{bmatrix}\right) \\
&=\operatorname{Rank}\left(\begin{bmatrix}
 K_{1}\Sigma_{\mathrm{1s}} K_{1} & K_{1}\Sigma_{\mathrm{1s}} L_{1} \\ 0 & 0
\end{bmatrix}\right).
  \end{align*}
Let $M=\begin{bmatrix} K_{1} & L_{1} \end{bmatrix}$.
From (\ref{S1}), $MM^*=I_r$ holds, which gives $\operatorname{Rank}(M)=r$.
It then follows that 
\begin{align*}
\operatorname{Rank}(A_s^2)=\operatorname{Rank}(A_s) 
&\Leftrightarrow \operatorname{Rank}\left(\begin{bmatrix} K_{1}\Sigma_{\mathrm{1s}} K_{1} & K_{1}\Sigma_{\mathrm{1s}} L_{1}\end{bmatrix} \right)=\operatorname{Rank}\left(\begin{bmatrix} K_{1} & L_{1} \end{bmatrix}\right)\\
&\Leftrightarrow \operatorname{Rank}\left(K_{1}\Sigma_{\mathrm{1s}} \begin{bmatrix} K_{1} & L_{1} \end{bmatrix}\right)=\operatorname{Rank}\left(\begin{bmatrix} K_{1} & L_{1} \end{bmatrix}\right)\\
&\Leftrightarrow \operatorname{Rank}(K_{1}\Sigma_{\mathrm{1s}}M) = \operatorname{Rank}(M)\\
&\Leftrightarrow  \operatorname{Rank}(K_{1}\Sigma_{\mathrm{1s}}M)=r,
\end{align*}
and consequently, $$r=\operatorname{Rank}(K_{1}\Sigma_{\mathrm{1s}}M)\leqslant \operatorname{Rank}(K_{1})\leqslant r.$$
Hence, $\operatorname{Rank}(K_{1}) = r$, which implies that $K_1$ is invertible.\\
Conversely, given $\operatorname{Rank}(K_{1}\Sigma_{\mathrm{1s}}M) = \operatorname{Rank}(M)$,  if $K_1$ is invertible, then
$\operatorname{Rank}(A_s^2)=\operatorname{Rank}(A_s)$ holds naturally, which completes the proof of sufficiency.
\end{proof}

\begin{remark}\label{ANA}
  Let $\hat{A} \in \mathbb{DC}_n^{\mathrm{CM}}$ with $r=\operatorname{Rank}(\hat{A})$. Note that $\Sigma_\mathrm{2d}=0$ is equivalent to $\Sigma_2=0$. From \emph{Lemma~\ref{CS}} and \emph{Theorem~\ref{J1}}, there exists $\hat{U} \in \mathbb{DC}_n^{U}$ such that
  \begin{align} \label{FAN}
 \hat{A} &=\hat{U}\begin{bmatrix}
\Sigma_1 K & \Sigma_1 L\\
0 & 0
\end{bmatrix}\hat{U}^*,
\end{align}
where $K \in \mathbb{DC}^{r \times r}$, $L \in \mathbb{DC}^{r \times (n-r)}$ satisfy $KK^* + LL^* = I_r$,
and $\Sigma_1 = \Sigma_{\mathrm{1s}} + \epsilon \Sigma_{\mathrm{1d}} = \operatorname{diag}(\mu_1, \ldots, \mu_r)$ with $\mu_1 \geq \cdots \geq \mu_r$ being positive appreciable dual numbers.
\end{remark}

\noindent Combining (\ref{E3}) with Theorem~\ref{J1}, we naturally derive the expression for the DGGI.

\begin{theorem}\label{tL1}
  Let $\hat{A} \in \mathbb{DC}_n^{\mathrm{CM}}$ with $r=\operatorname{Rank}(\hat{A})$.
Then there exist $\hat{U} \in \mathbb{DC}_n^{U}$ such that
\begin{align}\label{AGG}
   \hat{A}^{\scalebox{0.5}{\#}}  
  & = \hat{U}\begin{bmatrix}
    K^{-1}\Sigma_1^{-1} &  K^{-1}\Sigma_1^{-1}K^{-1}L \\ 0 & 0
    \end{bmatrix}\hat{U}^*,
 \end{align}   
with $K \in \mathbb{DC}^{r \times r}$ and  $L \in \mathbb{DC}^{r \times (n-r)}$ satisfying $KK^* + LL^* = I_r$,
where $\Sigma_1 = \Sigma_{\mathrm{1s}} + \epsilon \Sigma_{\mathrm{1d}} = \operatorname{diag}(\mu_1, \ldots, \mu_r)$ with $\mu_1 \geq \cdots \geq \mu_r$ being positive appreciable dual numbers.
\end{theorem}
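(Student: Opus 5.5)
We start from Remark~\ref{ANA}: since $\hat{A}\in\mathbb{DC}_n^{\mathrm{CM}}$, Theorem~\ref{J1} gives $\Sigma_2=0$ and $K$ invertible. Hence there is $\hat{U}\in\mathbb{DC}_n^{U}$ with $\hat{A}=\hat{U}\begin{bmatrix}\Sigma_1K & \Sigma_1L\\ 0&0\end{bmatrix}\hat{U}^*$ and $KK^*+LL^*=I_r$. By Theorem~\ref{J1}(c), $\hat{A}^{\scalebox{0.5}{\#}}$ exists and is the unique solution of (\ref{E3}). It therefore suffices to define
$$\hat{X}=\hat{U}\begin{bmatrix}K^{-1}\Sigma_1^{-1} & K^{-1}\Sigma_1^{-1}K^{-1}L\\ 0&0\end{bmatrix}\hat{U}^*$$
and verify the three equations of (\ref{E3}). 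Here $\Sigma_1^{-1}$ exists because the $\mu_i$ are appreciable, and $K^{-1}$ exists by Theorem~\ref{J1}(f).

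Since $\hat{U}^*\hat{U}=I_n$, every product collapses to a product of the inner $2\times 2$ block matrices. Write $T=\Sigma_1K$, which is invertible with $T^{-1}=K^{-1}\Sigma_1^{-1}$. Then $\hat{A}$ corresponds to $\begin{bmatrix}T & TK^{-1}L\\0&0\end{bmatrix}$, because $\Sigma_1L=\Sigma_1KK^{-1}L$, and $\hat{X}$ corresponds to $\begin{bmatrix}T^{-1} & T^{-2}TK^{-1}L\\0&0\end{bmatrix}$, because $K^{-1}\Sigma_1^{-1}K^{-1}L=T^{-2}(TK^{-1}L)$. With $B=TK^{-1}L$ these are the standard block forms $\begin{bmatrix}T&B\\0&0\end{bmatrix}$ and $\begin{bmatrix}T^{-1}&T^{-2}B\\0&0\end{bmatrix}$.

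Direct multiplication then gives
$$\hat{A}\hat{X}\leftrightarrow\begin{bmatrix}I_r & T^{-1}B\\0&0\end{bmatrix},\qquad \hat{X}\hat{A}\leftrightarrow\begin{bmatrix}I_r & T^{-1}B\\0&0\end{bmatrix}.$$
This yields commutativity. Next, $\hat{A}\hat{X}\hat{A}\leftrightarrow\begin{bmatrix}T&B\\0&0\end{bmatrix}$, and $\hat{X}\hat{A}\hat{X}\leftrightarrow\begin{bmatrix}T^{-1}&T^{-2}B\\0&0\end{bmatrix}$. Uniqueness of the DGGI then gives $\hat{A}^{\scalebox{0.5}{\#}}=\hat{X}$.

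The only delicate point is justifying the dual-number algebra. Block multiplication and associativity hold over the commutative ring $\mathbb{DC}$. Inverses of $K$ and $\Sigma_1$ are genuine two-sided inverses over $\mathbb{DC}$, since appreciable square dual matrices are invertible. The rest is routine.
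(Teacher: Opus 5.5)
Your proposal is correct and is essentially the paper's proof: start from the form (\ref{FAN}), define $\hat{X}$ by the claimed expression, check the three equations of (\ref{E3}) by block multiplication using $\hat{U}^*\hat{U}=I_n$, and conclude by uniqueness of the DGGI. Your substitution $T=\Sigma_1K$, $B=\Sigma_1L$ only streamlines the same computation, and your products agree with the paper's $\hat{A}\hat{X}=\hat{X}\hat{A}=\hat{U}\begin{bmatrix} I_r & K^{-1}L\\ 0 & 0\end{bmatrix}\hat{U}^*$.

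One small correction to your closing remark: a square dual matrix is invertible exactly when its standard part is invertible (here $K_1$ and $\Sigma_{\mathrm{1s}}$, cf.\ Theorem~\ref{J1}(e)). Being ``appreciable'' is not enough, since in the paper that term only means the standard part is nonzero.
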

\begin{proof}
The condition  $\hat{A} \in \mathbb{DC}_n^{\mathrm{CM}}$  ensures  that it has the form (\ref{FAN}).
  Let $$ \hat{X}= \hat{U}\begin{bmatrix}
    K^{-1}\Sigma_1^{-1} &  K^{-1}\Sigma_1^{-1}K^{-1}L \\ 0 & 0
    \end{bmatrix}\hat{U}^*.$$
 We first show that $\hat{X}$ satisfies equation (\ref{E3}). Indeed,
\begin{align*}
\hat{A}\hat{X}\hat{A}
&= \hat{U}\begin{bmatrix} \Sigma_1 K & \Sigma_1 L \\ 0 & 0 \end{bmatrix}\hat{U}^*
  \hat{U}\begin{bmatrix} K^{-1}\Sigma_1^{-1} & K^{-1}\Sigma_1^{-1}K^{-1}L \\ 0 & 0 \end{bmatrix}\hat{U}^*
  \hat{U}\begin{bmatrix} \Sigma_1 K & \Sigma_1 L \\ 0 & 0 \end{bmatrix}\hat{U}^* \\
&= \hat{U}\begin{bmatrix} \Sigma_1 K & \Sigma_1 L \\ 0 & 0 \end{bmatrix}\hat{U}^* 
  = \hat{A}; \\[6pt]  
\hat{X}\hat{A}\hat{X}
&= \hat{U}\begin{bmatrix} K^{-1}\Sigma_1^{-1} & K^{-1}\Sigma_1^{-1}K^{-1}L \\ 0 & 0 \end{bmatrix}\hat{U}^*
  \hat{U}\begin{bmatrix} \Sigma_1 K & \Sigma_1 L \\ 0 & 0 \end{bmatrix}\hat{U}^* 
  \hat{U}\begin{bmatrix} K^{-1}\Sigma_1^{-1} & K^{-1}\Sigma_1^{-1}K^{-1}L \\ 0 & 0 \end{bmatrix}\hat{U}^* \\
&= \hat{U}\begin{bmatrix} K^{-1}\Sigma_1^{-1} & K^{-1}\Sigma_1^{-1}K^{-1}L \\ 0 & 0 \end{bmatrix}\hat{U}^*
  = \hat{X}; \\[6pt]
\hat{A}\hat{X}
&= \hat{U}\begin{bmatrix} \Sigma_1 K & \Sigma_1 L \\ 0 & 0 \end{bmatrix}\hat{U}^*
  \hat{U}\begin{bmatrix} K^{-1}\Sigma_1^{-1} & K^{-1}\Sigma_1^{-1}K^{-1}L \\ 0 & 0 \end{bmatrix}\hat{U}^* = \hat{U}\begin{bmatrix} I_r & K^{-1}L \\ 0 & 0 \end{bmatrix}\hat{U}^*=\hat{X}\hat{A}.
\end{align*}
Therefore, $\hat{X}$ is the DGGI of $\hat{A}$.
Since $\hat{A}^{\scalebox{0.5}{\#}}$ exists and is unique, we have $\hat{X}=\hat{A}^{\scalebox{0.5}{\#}}$.
\end{proof}

\begin{theorem}\label{tL}
  Let $\hat{A} \in \mathbb{DC}_n^{\mathrm{CM}}$ with $r=\operatorname{Rank}(\hat{A})$.
Then there exist $\hat{U} \in \mathbb{DC}_n^{U}$ such that
\begin{align}
   \hat{A}^{\scalebox{0.5}{\#}}  
  & = \hat{U}\left(\begin{bmatrix}
    K_1^{-1}\Sigma_1^{-1} &  K_1^{-1}\Sigma_1^{-1}K_1^{-1}L_1 \\ 0 & 0
    \end{bmatrix} + \epsilon \begin{bmatrix}
      -K_1^{-1}K_2K_1^{-1}\Sigma_1^{-1} & R- K_1^{-1}\Sigma_{\mathrm{1s}}^{-2}\Sigma_{\mathrm{1d}}K_1^{-1}L_1 \\ 0 & 0
      \end{bmatrix} \right)\hat{U}^*\notag\\
      &=\hat{U}  \left(\begin{bmatrix}
    K_1^{-1}\Sigma_{\mathrm{1s}}^{-1} &  K_1^{-1}\Sigma_{\mathrm{1s}}^{-1}K_1^{-1}L_1 \\ 0 & 0
    \end{bmatrix} + \epsilon\begin{bmatrix}
      -K_1^{-1}K_2K_1^{-1}\Sigma_{\mathrm{1s}}^{-1}- K_1^{-1}\Sigma_{\mathrm{1s}}^{-2}\Sigma_{\mathrm{1d}} & R \\ 0 & 0
      \end{bmatrix} \right)\hat{U}^*, \label{VB}
\end{align}
where $$R=K_1^{-1}\Sigma_{\mathrm{1s}}^{-1}K_1^{-1}L_2-K_1^{-1}(\Sigma_{\mathrm{1s}}^{-1}K_1^{-1}K_2+K_2K_1^{-1}\Sigma_{\mathrm{1s}}^{-1})K_1^{-1}L_1- K_1^{-1}\Sigma_{\mathrm{1s}}^{-2}\Sigma_{\mathrm{1d}}K_1^{-1}L_1,$$
 $\Sigma_1=\Sigma_{\mathrm{1s}}+\epsilon \Sigma_{\mathrm{1d}}=  \operatorname{diag}(\mu_1, \ldots, \mu_r)$.
Here, $K_1,K_2 \in \mathbb{C}^{r \times r}$, $L_1,L_2 \in \mathbb{C}^{r \times (n-r)}$  satisfy
 $$ K_1 K_1^*+L_1L_1^*=I_r, \quad K_1 K_2^* +K_2 K_1^*+L_1 L_2^*+L_2 L_1^*=0,$$
 and $\mu_1 \geq \cdots \geq \mu_r$ are positive appreciable dual numbers.
\end{theorem}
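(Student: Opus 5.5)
We obtain the statement by expanding the formula of Theorem~\ref{tL1} into standard and infinitesimal parts, exactly as Theorem~\ref{L1} was obtained from Lemma~\ref{CS} and Theorem~\ref{TG} from Lemma~\ref{TG1}.

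\textbf{Setup.} Since $\hat{A} \in \mathbb{DC}_n^{\mathrm{CM}}$, Theorem~\ref{J1} gives $\Sigma_2=0$ and $K_1$ invertible. Remark~\ref{ANA} and Theorem~\ref{tL1} then give (\ref{AGG}) with $K=K_1+\epsilon K_2$ and $L=L_1+\epsilon L_2$. The relations $K_1K_1^*+L_1L_1^*=I_r$ and $K_1K_2^*+K_2K_1^*+L_1L_2^*+L_2L_1^*=0$ follow from $KK^*+LL^*=I_r$ by comparing standard and infinitesimal parts, as in the proof of Theorem~\ref{TG}.

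\textbf{Dual inverses.} Because $K_1$ is invertible, the dual inverse formula from the introduction gives $K^{-1}=K_1^{-1}-\epsilon K_1^{-1}K_2K_1^{-1}$. Similarly $\Sigma_1^{-1}=\Sigma_{\mathrm{1s}}^{-1}-\epsilon\Sigma_{\mathrm{1s}}^{-2}\Sigma_{\mathrm{1d}}$, using that the diagonal matrices $\Sigma_{\mathrm{1s}}$ and $\Sigma_{\mathrm{1d}}$ commute.

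\textbf{(1,1) block.} We have
\[
K^{-1}\Sigma_1^{-1}=K_1^{-1}\Sigma_1^{-1}-\epsilon K_1^{-1}K_2K_1^{-1}\Sigma_1^{-1}.
\]
This is the first displayed form, keeping $\Sigma_1^{-1}$ as a dual matrix. Expanding $\Sigma_1^{-1}$ as well, and using $\epsilon^2=0$ so that in the $\epsilon$-term $\Sigma_1^{-1}$ may be replaced by $\Sigma_{\mathrm{1s}}^{-1}$, gives the second form. Its infinitesimal part is $-K_1^{-1}K_2K_1^{-1}\Sigma_{\mathrm{1s}}^{-1}-K_1^{-1}\Sigma_{\mathrm{1s}}^{-2}\Sigma_{\mathrm{1d}}$.

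\textbf{(1,2) block.} We expand the four-factor product $K^{-1}\Sigma_1^{-1}K^{-1}L$. Its standard part is $K_1^{-1}\Sigma_{\mathrm{1s}}^{-1}K_1^{-1}L_1$. Its infinitesimal part is the sum of four terms, one for each factor whose $\epsilon$-part is selected:
\[
-K_1^{-1}K_2K_1^{-1}\Sigma_{\mathrm{1s}}^{-1}K_1^{-1}L_1,\quad
-K_1^{-1}\Sigma_{\mathrm{1s}}^{-2}\Sigma_{\mathrm{1d}}K_1^{-1}L_1,\quad
-K_1^{-1}\Sigma_{\mathrm{1s}}^{-1}K_1^{-1}K_2K_1^{-1}L_1,\quad
K_1^{-1}\Sigma_{\mathrm{1s}}^{-1}K_1^{-1}L_2 .
\]
Factoring $K_1^{-1}$ on the left and $K_1^{-1}L_1$ on the right of the two $K_2$-terms gives $-K_1^{-1}(\Sigma_{\mathrm{1s}}^{-1}K_1^{-1}K_2+K_2K_1^{-1}\Sigma_{\mathrm{1s}}^{-1})K_1^{-1}L_1$. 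Hence the total infinitesimal part is exactly $R$.

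For the first displayed form, the $\Sigma_{\mathrm{1d}}$ contribution is instead left inside the dual standard block $K_1^{-1}\Sigma_1^{-1}K_1^{-1}L_1$. This is why the stated $\epsilon$-entry there is $R-K_1^{-1}\Sigma_{\mathrm{1s}}^{-2}\Sigma_{\mathrm{1d}}K_1^{-1}L_1$.

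\textbf{Main obstacle.} The only real difficulty is the bookkeeping in the (1,2) block. One must track which factor contributes the $\epsilon$-term and avoid counting the $\Sigma_{\mathrm{1d}}$ term twice when passing between the two forms. Since $\Sigma_{\mathrm{1s}}$ and $K_1$ do not commute, every term must keep its factor order throughout.
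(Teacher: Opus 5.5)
Your route is the same as the paper's: substitute $K^{-1}=K_1^{-1}-\epsilon K_1^{-1}K_2K_1^{-1}$, $\Sigma_1^{-1}=\Sigma_{\mathrm{1s}}^{-1}-\epsilon\Sigma_{\mathrm{1s}}^{-2}\Sigma_{\mathrm{1d}}$ and $L=L_1+\epsilon L_2$ into (\ref{AGG}) and expand. Your derivation of the fully expanded form (\ref{VB}) is correct, including the four infinitesimal contributions in the $(1,2)$ block and their regrouping into $R$. That expanded form is all the paper's proof actually verifies.

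The error is in your paragraph justifying the first displayed form. Put $X:=K_1^{-1}\Sigma_{\mathrm{1s}}^{-2}\Sigma_{\mathrm{1d}}K_1^{-1}L_1$. Keeping the dual block
$K_1^{-1}\Sigma_1^{-1}K_1^{-1}L_1=K_1^{-1}\Sigma_{\mathrm{1s}}^{-1}K_1^{-1}L_1-\epsilon X$
already accounts for the $\Sigma_{\mathrm{1d}}$ term. Since $R$ also contains $-X$, the remaining $\epsilon$-entry must be $R+X$ (that is, $R$ with its last term deleted), not $R-X$. With the stated $R-X$ the total infinitesimal part becomes $R-2X$, which contradicts your own computation. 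This is exactly the double counting you warn about under ``Main obstacle''.

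The discrepancy is real. Take $n=2$, $\hat U=I_2$, $K=L=1/\sqrt2$ and $\Sigma_1=1+\epsilon$, i.e.\ $\hat A=\frac{1+\epsilon}{\sqrt2}\begin{bmatrix}1&1\\0&0\end{bmatrix}$. Then $\hat A^{\scalebox{0.5}{\#}}=\sqrt2(1-\epsilon)\begin{bmatrix}1&1\\0&0\end{bmatrix}$, whose $(1,2)$ entry is $\sqrt2-\sqrt2\,\epsilon$. The first display as printed gives $\sqrt2-3\sqrt2\,\epsilon$ instead.

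So the printed first display contains a sign typo. A correct proof should establish (\ref{VB}) and note that the first form holds only with $R+K_1^{-1}\Sigma_{\mathrm{1s}}^{-2}\Sigma_{\mathrm{1d}}K_1^{-1}L_1$ in the $(1,2)$ $\epsilon$-entry, rather than explain away the printed sign.
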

\begin{proof}
By Theorem~\ref{tL1}, we let
 $$ K=K_1+\epsilon K_2 \in \mathbb{DC}^{r \times r}, \ \  L=L_1+\epsilon L_2 \in \mathbb{DC}^{r \times (n-r)}, \ \ \Sigma_1=\Sigma_{\mathrm{1s}}+\epsilon \Sigma_{\mathrm{1d}},$$
where $K_1,K_2 \in \mathbb{C}^{r \times r}$ and $L_1,L_2 \in \mathbb{C}^{r \times (n-r)}$. 
Thus, as demonstrated by the following calculations,
\begin{align*}
  K^{-1}\Sigma_1^{-1}=&(K_1^{-1}-\epsilon K_1^{-1}K_2K_1^{-1})(\Sigma_{\mathrm{1s}}^{-1}-\epsilon \Sigma_{\mathrm{1s}}^{-2}\Sigma_{\mathrm{1d}})\\
  =& K_1^{-1}\Sigma_{\mathrm{1s}}^{-1} - \epsilon(K_1^{-1}K_2K_1^{-1}\Sigma_{\mathrm{1s}}^{-1}+ K_1^{-1}\Sigma_{\mathrm{1s}}^{-2}\Sigma_{\mathrm{1d}}) ,\\
  K^{-1}\Sigma_1^{-1}K^{-1}L =& K^{-1}\Sigma_1^{-1}(K_1^{-1}-\epsilon K_1^{-1}K_2K_1^{-1})(L_1+\epsilon L_2)\\
  =& [K_1^{-1}\Sigma_{\mathrm{1s}}^{-1} - \epsilon(K_1^{-1}K_2K_1^{-1}\Sigma_{\mathrm{1s}}^{-1}+ K_1^{-1}\Sigma_{\mathrm{1s}}^{-2}\Sigma_{\mathrm{1d}})]
  [K_1^{-1}L_1 + \epsilon (K_1^{-1}L_2-K_1^{-1}K_2K_1^{-1}L_1)]\\
  =& K_1^{-1}\Sigma_{\mathrm{1s}}^{-1}K_1^{-1}L_1 + \epsilon R,
\end{align*}
  where $$R=K_1^{-1}\Sigma_{\mathrm{1s}}^{-1}K_1^{-1}L_2-K_1^{-1}(\Sigma_{\mathrm{1s}}^{-1}K_1^{-1}K_2+K_2K_1^{-1}\Sigma_{\mathrm{1s}}^{-1})K_1^{-1}L_1- K_1^{-1}\Sigma_{\mathrm{1s}}^{-2}\Sigma_{\mathrm{1d}}K_1^{-1}L_1.$$
Then Theorem~\ref{tL1} implies that (\ref{VB}) holds.
\end{proof}

\noindent Based on the D-H-S decomposition, we now derive the expression for the DCGI.

\begin{theorem}\label{tL2}
  Let $\hat{A} \in \mathbb{DC}_n^{\mathrm{CM}}$ with $r=\operatorname{Rank}(\hat{A})$.
Then there exist $\hat{U} \in \mathbb{DC}_n^{U}$ such that
\begin{align}\label{ACG}
   \hat{A}^{\tiny\textcircled{\#}}  
  & = \hat{U}\begin{bmatrix}
    K^{-1}\Sigma_1^{-1} & 0 \\ 0 & 0
    \end{bmatrix}\hat{U}^*,
 \end{align}   
 where $K \in \mathbb{DC}^{r \times r}$, $L \in \mathbb{DC}^{r \times (n-r)}$ satisfy $KK^* + LL^* = I_r$,
and $\Sigma_1 = \Sigma_{\mathrm{1s}} + \epsilon \Sigma_{\mathrm{1d}} = \operatorname{diag}(\mu_1, \ldots, \mu_r)$ with $\mu_1 \geq \cdots \geq \mu_r$ being positive appreciable dual numbers.
\end{theorem}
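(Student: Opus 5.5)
Following the proof of Theorem~\ref{tL1}, I will start from the representation (\ref{FAN}) of Remark~\ref{ANA}. Since $\hat{A}\in\mathbb{DC}_n^{\mathrm{CM}}$, Theorem~\ref{J1} gives that $K$ is invertible and $\Sigma_2=0$, so
\[
\hat{A}=\hat{U}\begin{bmatrix}\Sigma_1K & \Sigma_1L\\ 0&0\end{bmatrix}\hat{U}^* .
\]
I will take as candidate
\[
\hat{X}=\hat{U}\begin{bmatrix}K^{-1}\Sigma_1^{-1}&0\\0&0\end{bmatrix}\hat{U}^* ,
\]
which is well defined because $K$ is invertible and $\Sigma_1$ is diagonal with appreciable entries. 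The plan is to check the three equations in (\ref{En}) directly and then appeal to the uniqueness of the DCGI, whose existence is guaranteed by Theorem~\ref{J1}(d).

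The computations reduce to $2\times 2$ block products, because $\hat{U}^*\hat{U}=I_n$ cancels the inner factors. First,
\[
\hat{A}\hat{X}=\hat{U}\begin{bmatrix}\Sigma_1KK^{-1}\Sigma_1^{-1}&0\\0&0\end{bmatrix}\hat{U}^*=\hat{U}\begin{bmatrix}I_r&0\\0&0\end{bmatrix}\hat{U}^*.
\]
This is Hermitian, since $\hat{U}^{*}$ is the conjugate transpose of $\hat{U}$ and the middle factor is a real diagonal projector, so $(\hat{A}\hat{X})^*=\hat{A}\hat{X}$. Second, multiplying $\hat{A}\hat{X}$ on the right by $\hat{A}$ gives
\[
\hat{A}\hat{X}\hat{A}=\hat{U}\begin{bmatrix}I_r&0\\0&0\end{bmatrix}\begin{bmatrix}\Sigma_1K&\Sigma_1L\\0&0\end{bmatrix}\hat{U}^*=\hat{A}.
\]
Third,
\[
\hat{A}\hat{X}^2=(\hat{A}\hat{X})\hat{X}=\hat{U}\begin{bmatrix}I_r&0\\0&0\end{bmatrix}\begin{bmatrix}K^{-1}\Sigma_1^{-1}&0\\0&0\end{bmatrix}\hat{U}^*=\hat{X}.
\]

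There is no real analytic obstacle. The only points needing care are the dual-number bookkeeping: the block identities must hold exactly over $\mathbb{DC}$, which uses that $\Sigma_1$ and $K$ are invertible dual matrices (appreciable diagonal, and $K_1$ invertible) and that dual conjugate transposition reverses products. The other point is the uniqueness of the DCGI for $\hat{A}\in\mathbb{DC}_n^{\mathrm{CM}}$, which is taken from its definition in (Wang and Gao \citeyear{Wang1}). With these, $\hat{X}=\hat{A}^{\tiny\textcircled{\#}}$, which is (\ref{ACG}).
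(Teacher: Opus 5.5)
Your proof is correct, but it runs in the opposite logical direction from the paper's. You guess the candidate $\hat{U}\begin{bmatrix}K^{-1}\Sigma_1^{-1}&0\\0&0\end{bmatrix}\hat{U}^*$ and check the three equations in (\ref{En}), which is exactly how the paper handles the DGGI in Theorem~\ref{tL1}. This gives existence directly, and you then take uniqueness of the DCGI from Wang and Gao.

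For this theorem the paper does something different. It writes an arbitrary $\hat{X}=\hat{U}\begin{bmatrix}X_1&X_2\\X_3&X_4\end{bmatrix}\hat{U}^*$ and solves the defining equations:
\begin{itemize}
\item $(\hat{A}\hat{X})^*=\hat{A}\hat{X}$ forces $KX_2+LX_4=0$;
\item $\hat{A}\hat{X}\hat{A}=\hat{A}$, after right-multiplying the two block equations by $K^*$ and $L^*$ and using $KK^*+LL^*=I_r$, forces $KX_1+LX_3=\Sigma_1^{-1}$;
\item $\hat{A}\hat{X}^2=\hat{X}$ forces $X_3=X_4=0$.
\end{itemize}
Hence $X_1=K^{-1}\Sigma_1^{-1}$ and $X_2=0$.

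That derivation produces the formula without guessing it. In effect it shows that every solution of (\ref{En}) must have this form, which is uniqueness. However, it never checks that the resulting matrix satisfies (\ref{En}), so it relies on the known existence of the DCGI.

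So your verification proves sufficiency and the paper's computation proves necessity. Combining the two gives a self-contained proof of existence, uniqueness and the representation (\ref{ACG}), with no citation of Wang and Gao needed for either half.
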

\begin{proof}
Since $\hat{A} \in \mathbb{DC}_n^{\mathrm{CM}}$, it has the form of (\ref{FAN}).
  Suppose 
  $$ \hat{X}=\hat{U}\begin{bmatrix}
   X_1 & X_2 \\ X_3 & X_4 
\end{bmatrix}\hat{U}^*,$$
where $X_1 \in \mathbb{C}^{r \times r}$, $X_2 \in \mathbb{C}^{r \times (n-r)}$, $X_3\in \mathbb{C}^{(n-r)\times r }$ and $X_4\in \mathbb{C}^{(n-r)\times (n-r) }$.
   Thus,
\begin{align*}
&\hat{A}\hat{X}=\hat{U}\begin{bmatrix}
\Sigma_1(K X_1+L X_3) & \Sigma_1(K X_2+L X_4) \\
0 & 0
\end{bmatrix}\hat{U}^*.
\end{align*}
As $(\hat{A}\hat{X})^*=\hat{A}\hat{X}$, we have
\begin{align}\label{M0}
  KX_2+LX_4=0,
\end{align}
and consequently,
\begin{align*}
&\hat{A}\hat{X}\hat{A}=\hat{U}\begin{bmatrix}
  \Sigma_1(K X_1+L X_3)\Sigma_1K  & \Sigma_1(K X_1+L X_3)\Sigma_1L \\
  0 & 0
\end{bmatrix}\hat{U}^*,\\
 &\hat{A}\hat{X}^2=\hat{U}\begin{bmatrix}
    \Sigma_1(K X_1+L X_3)X_1 & \Sigma_1(K X_1+L X_3)X_2 \\ 0 & 0
    \end{bmatrix}\hat{U}^*.
\end{align*}
Further, $\hat{A}\hat{X}\hat{A}=\hat{A}$ leads to
$$ \Sigma_1(K X_1+L X_3)\Sigma_1K =\Sigma_1K \quad \text{and} \quad \Sigma_1(K X_1+L X_3)\Sigma_1L =\Sigma_1L.$$
Right-multiplying the two equations by $K^*$ and $L^*$ respectively and adding them gives
\begin{align}\label{M1}
  K X_1+L X_3=\Sigma_{1}^{-1}.
\end{align}
Meanwhile, we have $X_3=0$ and  $X_4=0$ by $\hat{A}\hat{X}^2=\hat{X}$.
Thus, from (\ref{M0}) and (\ref{M1}), it can be derived that $X_1=K_1^{-1}\Sigma_1^{-1}$ and $X_2=0$.\\
In conclusion, $$ \hat{X}=\hat{U}\begin{bmatrix}
    K^{-1}\Sigma_1^{-1} & 0 \\ 0 & 0
    \end{bmatrix}\hat{U}^*.$$
The existence and uniqueness of  $\hat{A}^{\tiny\textcircled{\#}}$ as the solution to (\ref{En}) implies that $\hat{X}=\hat{A}^{\tiny\textcircled{\#}}$.
\end{proof}

\begin{theorem}\label{tM}
  Let $\hat{A} \in \mathbb{DC}_n^{\mathrm{CM}}$ with $r=\operatorname{Rank}(\hat{A})$.
Then there exist $U \in \mathbb{DQ}_n^{U}$ such that
\begin{align}
   \hat{A}^{\tiny\textcircled{\#}}  
  &=\hat{U}\left(\begin{bmatrix}
    K_{1}^{-1}\Sigma_1^{-1} & 0 \\ 0 & 0
    \end{bmatrix} - \epsilon\begin{bmatrix}
 K_{1}^{-1}K_{2}K_{1}^{-1}\Sigma_1^{-1} & 0 \\ 0 & 0
 \end{bmatrix} \right)\hat{U}^* \notag\\
 &=\hat{U}\left(\begin{bmatrix}
    K_{1}^{-1}\Sigma_{\mathrm{1s}}^{-1} & 0 \\ 0 & 0
    \end{bmatrix} - \epsilon\begin{bmatrix}
      K_{1}^{-1}K_{2}K_{1}^{-1}\Sigma_{\mathrm{1s}}^{-1}+K_1^{-1}\Sigma_{\mathrm{1s}}^{-2}\Sigma_{\mathrm{1d}} & 0 \\ 0 & 0
 \end{bmatrix} \right)\hat{U}^*, \notag
\end{align}
where $K_1,K_2 \in \mathbb{C}^{r \times r}$, $L_1,L_2 \in \mathbb{C}^{r \times (n-r)}$  satisfy
 $$ K_1 K_1^*+L_1L_1^*=I_r, \quad K_1 K_2^* +K_2 K_1^*+L_1 L_2^*+L_2 L_1^*=0,$$
and $\Sigma_1 = \Sigma_{\mathrm{1s}} + \epsilon \Sigma_{\mathrm{1d}} = \operatorname{diag}(\mu_1, \ldots, \mu_r)$ with $\mu_1 \geq \cdots \geq \mu_r$ being positive appreciable dual numbers.
\end{theorem}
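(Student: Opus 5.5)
The plan is to start from Theorem~\ref{tL2}, which already gives
$\hat{A}^{\tiny\textcircled{\#}}=\hat{U}\begin{bmatrix} K^{-1}\Sigma_1^{-1} & 0\\ 0 & 0\end{bmatrix}\hat{U}^*$
with the same $\hat{U}$ as in the form (\ref{FAN}) of Remark~\ref{ANA}. What remains is to expand the single dual block $K^{-1}\Sigma_1^{-1}$ into its standard and infinitesimal parts. I will argue exactly as in the proof of Theorem~\ref{tL}, writing $K=K_1+\epsilon K_2$ and $L=L_1+\epsilon L_2$ with complex blocks. The constraints $K_1K_1^*+L_1L_1^*=I_r$ and $K_1K_2^*+K_2K_1^*+L_1L_2^*+L_2L_1^*=0$ then follow by comparing the standard and $\epsilon$ parts of $KK^*+LL^*=I_r$, as in Theorem~\ref{L1}.

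The first key step is the invertibility of $K_1$. Since $\hat{A}\in\mathbb{DC}_n^{\mathrm{CM}}$, Theorem~\ref{J1}(e) gives that $K_1$ is invertible. The dual-matrix inverse formula recalled in the introduction then gives $K^{-1}=K_1^{-1}-\epsilon K_1^{-1}K_2K_1^{-1}$.

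Next I multiply on the right by $\Sigma_1^{-1}$, keeping $\Sigma_1^{-1}$ as a dual factor. This gives
$K^{-1}\Sigma_1^{-1}=K_1^{-1}\Sigma_1^{-1}-\epsilon K_1^{-1}K_2K_1^{-1}\Sigma_1^{-1}$,
which is the first displayed form.

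For the second form I substitute $\Sigma_1^{-1}=\Sigma_{\mathrm{1s}}^{-1}-\epsilon\Sigma_{\mathrm{1s}}^{-2}\Sigma_{\mathrm{1d}}$, which is valid because $\Sigma_1$ is diagonal with appreciable entries, and then use $\epsilon^2=0$. The term $\epsilon K_1^{-1}K_2K_1^{-1}\Sigma_1^{-1}$ collapses to $\epsilon K_1^{-1}K_2K_1^{-1}\Sigma_{\mathrm{1s}}^{-1}$. The term $K_1^{-1}\Sigma_1^{-1}$ contributes $K_1^{-1}\Sigma_{\mathrm{1s}}^{-1}-\epsilon K_1^{-1}\Sigma_{\mathrm{1s}}^{-2}\Sigma_{\mathrm{1d}}$. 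Collecting the $\epsilon$ coefficient gives $-(K_1^{-1}K_2K_1^{-1}\Sigma_{\mathrm{1s}}^{-1}+K_1^{-1}\Sigma_{\mathrm{1s}}^{-2}\Sigma_{\mathrm{1d}})$. This matches the computation of $K^{-1}\Sigma_1^{-1}$ already carried out in the proof of Theorem~\ref{tL}.

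There is no real obstacle. The only point needing care is justifying $K^{-1}=K_1^{-1}-\epsilon K_1^{-1}K_2K_1^{-1}$, that is, making sure $K_1$ (and not merely $K$) is invertible. Theorem~\ref{J1} supplies exactly this. The zero blocks carry over unchanged.
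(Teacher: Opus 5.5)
Your proposal is correct and is exactly the ``direct calculation'' the paper has in mind. You start from Theorem~\ref{tL2}, expand $K^{-1}=K_1^{-1}-\epsilon K_1^{-1}K_2K_1^{-1}$ (with $K_1$ invertible by Theorem~\ref{J1}) and $\Sigma_1^{-1}=\Sigma_{\mathrm{1s}}^{-1}-\epsilon\Sigma_{\mathrm{1s}}^{-2}\Sigma_{\mathrm{1d}}$, and reuse the expansion of $K^{-1}\Sigma_1^{-1}$ from the proof of Theorem~\ref{tL}; both displayed forms come out as you computed.
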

\begin{proof}
We omit the proof, which follows from Theorem~\ref{tL2} after some direct calculation.
\end{proof}

\noindent Theorem~\ref{J1} implies that the DGGI and DCGI exist only if the DMPGI exists. 
Moreover, when they do exist, they can be readily computed using Theorem~\ref{tL1} and Theorem~\ref{tL2}.
The next two examples are provided to illustrate this and to demonstrate the simplicity and ease of computing the DGGI and DCGI using the D-H-S decomposition.

\begin{example}
 Let $$ \hat{A}= \begin{bmatrix}
2+3\epsilon & -\epsilon & -6\epsilon \\
4\epsilon & 3+4\epsilon & 3\epsilon \\
-6\epsilon & 0 & 0
\end{bmatrix},$$
then exists $\hat{U}=\begin{bmatrix}
1 & \epsilon & 3\epsilon \\
-\epsilon & 1 & 0 \\
-3\epsilon & 0 & 1
\end{bmatrix} \in \mathbb{DC}_3^{U}$, 
$\hat{V}=\begin{bmatrix}
1 & 2\epsilon & 3\epsilon \\
-2\epsilon & 1 & -\epsilon \\
-3\epsilon & \epsilon & 1
\end{bmatrix} \in \mathbb{DC}_3^{U}$ such that 
$$ \Sigma = \hat{U}^*\hat{A}\hat{V}= \begin{bmatrix}
2+3\epsilon & 0 & 0 \\
0 & 3 + 4\epsilon & 0 \\
0 & 0 & 0
\end{bmatrix}, \ \ \text{i.e.,}\ \
\Sigma_1=\begin{bmatrix}
  2+3\epsilon & 0 \\
0 & 3 + 4\epsilon 
\end{bmatrix}, \ \ \Sigma_2=0.
$$
Thus, 
$$ \hat{A}=\hat{U}  \begin{bmatrix}
2+3\epsilon & -2\epsilon & 0 \\
3\epsilon & 3+4\epsilon & 3\epsilon \\
0 & 0 & 0
\end{bmatrix}\hat{U}^*,  
$$
where $ K_1=\begin{bmatrix}
  1 & -\epsilon \\ \epsilon & 1 
\end{bmatrix}$ and $ L_1=\begin{bmatrix}
  0 \\ \epsilon
\end{bmatrix}$.
It follows from $\Sigma_2=0$ that the DGGI and DCGI exist. Subsequently,
$\hat{A}^{\scalebox{0.5}{\#}}$ and $\hat{A}^{\tiny\textcircled{\#}}$ are computed using \emph{Theorem~\ref{tL1}} and \emph{Theorem~\ref{tL2}}, yielding:
\begin{align*}
 & \hat{A}^{\scalebox{0.5}{\#}}=\hat{U}\begin{bmatrix}
    K^{-1}\Sigma_1^{-1} &  K^{-1}\Sigma_1^{-1}K^{-1}L \\ 0 & 0
    \end{bmatrix}\hat{U}^*=\hat{U}\begin{bmatrix}
\frac{1}{2}-\frac{3\epsilon}{4} & \frac{\epsilon}{3} & 0 \\
-\frac{\epsilon}{2} & \frac{1}{3}-\frac{4\epsilon}{9} & \frac{\epsilon}{3} \\
0 & 0 & 0
\end{bmatrix}\hat{U}^* =\begin{bmatrix}
\frac{1}{2} - \frac{3\epsilon}{4} & \frac{\epsilon}{6} & -\frac{3\epsilon}{2} \\
-\frac{2\epsilon}{3} & \frac{1}{3} - \frac{4\epsilon}{9} & \frac{\epsilon}{3} \\
-\frac{3\epsilon}{2} & 0 & 0
\end{bmatrix};\\
&\hat{A}^{\tiny\textcircled{\#}} =\hat{U}\begin{bmatrix}
    K^{-1}\Sigma_1^{-1} & 0 \\ 0 & 0
    \end{bmatrix}\hat{U}^*= \hat{U}\begin{bmatrix}
\frac{1}{2}-\frac{3\epsilon}{4} & \frac{\epsilon}{3} & 0 \\
-\frac{\epsilon}{2} & \frac{1}{3}-\frac{4\epsilon}{9} & 0 \\
0 & 0 & 0
\end{bmatrix}\hat{U}^* = \begin{bmatrix}
\frac{1}{2} - \frac{3\epsilon}{4} & \frac{\epsilon}{6} & -\frac{3\epsilon}{2} \\
-\frac{2\epsilon}{3} & \frac{1}{3} - \frac{4\epsilon}{9} & 0 \\
-\frac{3\epsilon}{2} & 0 & 0
\end{bmatrix}.
\end{align*}
\end{example}

\noindent We now explore the connections among different dual generalized inverses by leveraging D-H-S decompositions.

\begin{theorem}\label{y01}
   Let $ \hat{A} \in \mathbb{DC}_n^{\mathrm{CM}}$ have the form \emph{(\ref{FAN})}. Then the following are equivalent:\\
\noindent  $(a)$ $\hat{A}^{N}=\hat{A}^{\scalebox{0.5}{\#}}$; \ \ \ \ 
\noindent  $(b)$ $\hat{A}^{N}=\hat{A}^{\tiny{\textcircled{\#}}} $; \\
\noindent  $(c)$ $\hat{A}^{\scalebox{0.5}{\#}} = \hat{A}^{\tiny{\textcircled{\#}}} $;\ \ \ \ \ 
\noindent  $(d)$ $L=0$.
\end{theorem}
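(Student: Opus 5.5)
Since $\hat{A}\in\mathbb{DC}_n^{\mathrm{CM}}$, Theorem~\ref{J1} and Remark~\ref{ANA} give $\Sigma_2=0$ and $K$ invertible, and $\hat{A}$ takes the form (\ref{FAN}). Corollary~\ref{C1} then says $\hat{A}^{\dagger}$ exists and $\hat{A}^N=\hat{A}^{\dagger}$. I would therefore use the three block forms with the same $\hat{U}$:
\[
\hat{A}^{N}=\hat{U}\begin{bmatrix} K^*\Sigma_1^{-1} & 0\\ L^*\Sigma_1^{-1} & 0\end{bmatrix}\hat{U}^*,\quad
\hat{A}^{\scalebox{0.5}{\#}}=\hat{U}\begin{bmatrix} K^{-1}\Sigma_1^{-1} & K^{-1}\Sigma_1^{-1}K^{-1}L\\ 0 & 0\end{bmatrix}\hat{U}^*,\quad
\hat{A}^{\tiny\textcircled{\#}}=\hat{U}\begin{bmatrix} K^{-1}\Sigma_1^{-1} & 0\\ 0 & 0\end{bmatrix}\hat{U}^*,
\]
taken from Lemma~\ref{TG1}, Theorem~\ref{tL1} and Theorem~\ref{tL2}. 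Because $\hat{U}$ is dual unitary, each equality in (a)--(c) is equivalent to blockwise equality of the middle matrices.

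For (c)$\Leftrightarrow$(d): the two forms differ only in the $(1,2)$ block, so (c) holds iff $K^{-1}\Sigma_1^{-1}K^{-1}L=0$. Since $K^{-1}$, $\Sigma_1^{-1}$ and $K^{-1}$ are invertible dual matrices ($\Sigma_1$ is diagonal with appreciable entries), we can left-multiply by $K\Sigma_1K$ to get $L=0$. The converse is immediate.

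For (b)$\Leftrightarrow$(d): (b) holds iff $L^*\Sigma_1^{-1}=0$ and $K^*\Sigma_1^{-1}=K^{-1}\Sigma_1^{-1}$. The first condition alone gives $L=0$, after right-multiplying by $\Sigma_1$ and taking conjugate transposes. Conversely, if $L=0$, then $KK^*=I_r$, so $K^*=K^{-1}$ and both block conditions hold.

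For (a)$\Leftrightarrow$(d): comparing blocks, (a) forces $L^*\Sigma_1^{-1}=0$ from the $(2,1)$ block, hence $L=0$. Conversely, $L=0$ makes the $(1,2)$ block of $\hat{A}^{\scalebox{0.5}{\#}}$ vanish, and $K^*=K^{-1}$ as before, so the forms coincide.

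The only delicate point is the dual-matrix algebra. One needs that $KK^*=I_r$ for a square dual $K$ gives $K^*=K^{-1}$; this holds because $K$ is invertible by Theorem~\ref{J1}. One also needs that multiplying by invertible dual matrices preserves zero. Both are routine once invertibility of $K$ and $\Sigma_1$ is secured, so I expect no real obstacle beyond checking that the same $\hat{U}$ is used in all three representations.
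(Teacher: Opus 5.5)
Your proof is correct and matches the paper's. Both compare the middle blocks of (\ref{AGG}), (\ref{ACG}) and the form of $\hat{A}^{N}$ from Lemma~\ref{TG1} under a common $\hat{U}$. The paper writes out only (c)$\Leftrightarrow$(d), via the $(1,2)$ block, and calls the other cases analogous. You also supply the $(2,1)$-block argument and the step $K^*=K^{-1}$ needed for the converses of (a) and (b).

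The common-$\hat{U}$ point you flag is easily settled. For the given $\hat{U}$, $\hat{U}\begin{bmatrix} K^*\Sigma_1^{-1} & 0\\ L^*\Sigma_1^{-1} & 0\end{bmatrix}\hat{U}^*$ satisfies the four Penrose equations directly, using $KK^*+LL^*=I_r$. By uniqueness it therefore equals $\hat{A}^{\dagger}=\hat{A}^{N}$.
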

\begin{proof}
  We prove only $(c)\Leftrightarrow (d)$, as the proofs for $(a)\Leftrightarrow (d)$ and $(b)\Leftrightarrow (d)$ are analogous.\\
  \noindent  $(c)\Leftrightarrow (d):$ With sufficiency being clear, we proceed to prove necessity.
  From (\ref{AGG}) and (\ref{ACG}),  the equality $\hat{A}^{\scalebox{0.5}{\#}} = \hat{A}^{\tiny{\textcircled{\#}}} $  implies 
  $$ K^{-1}\Sigma_{1}^{-1}K^{-1}L=0.$$
   Therefore, the invertibility of $K$ and $\Sigma_{1}$ implies $L=0$.
\end{proof}

\noindent Theorem~\ref{y01} provides a generalization of the conclusion in Theorem 2 of (Wang et al. \citeyear{Wang5}), and its proof is conveniently facilitated by the D-H-S decomposition.

\begin{remark}\label{R3}
Specifically, as shown in \emph{(\ref{AGG})} and \emph{(\ref{ACG})}, when $L = 0$, we have
$$
\hat{A}^{\scalebox{0.5}{\#}}=\hat{A}^{\tiny\textcircled{\#}} 
=\hat{U}\begin{bmatrix}
    K^{-1}\Sigma_1^{-1} & 0 \\ 0 & 0
    \end{bmatrix}\hat{U}^*.
$$
\end{remark}

\begin{theorem}\label{yd}
  Let $ \hat{A} \in \mathbb{DC}_n^{\mathrm{CM}}$ have the form \emph{(\ref{FAN})}. Then following statements hold: \\
\noindent  $(a)$ $\hat{A}=\hat{A}^{\scalebox{0.5}{\#}}$ if and only if $(\Sigma_{1} K)^2 = I_r$;\\
\noindent  $(b)$ $\hat{A}=\hat{A}^{\tiny{\textcircled{\#}}} $ if and only if $L= 0, (\Sigma_{1} K)^2=I_r$;\\
\noindent  $(c)$ $\hat{A}^*=\hat{A}^{\scalebox{0.5}{\#}}$ if and only if $L= 0, \Sigma_{1}=I_r$;\\
\noindent  $(d)$ $\hat{A}^*=\hat{A}^{\tiny{\textcircled{\#}}} $ if and only if $L= 0, \Sigma_{1}=I_r$.
\end{theorem}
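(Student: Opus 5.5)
We work in the representation (\ref{FAN}), write $T=\Sigma_1 K$, and compare block matrices after cancelling the dual unitary $\hat{U}$. Two elementary facts are used throughout. First, $T$ is invertible, since $K$ is invertible by Theorem~\ref{J1} and $\Sigma_1$ is invertible. Second, $\Sigma_1$ is a diagonal matrix of positive appreciable dual numbers, hence Hermitian.

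\textbf{Part (a).} By (\ref{AGG}), $\hat{A}^{\scalebox{0.5}{\#}}=\hat{U}\begin{bmatrix} T^{-1} & T^{-1}K^{-1}L\\ 0&0\end{bmatrix}\hat{U}^*$, while $\hat{A}=\hat{U}\begin{bmatrix} T & \Sigma_1L\\ 0&0\end{bmatrix}\hat{U}^*$.

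Equating the $(1,1)$ blocks gives $T=T^{-1}$, i.e. $T^2=I_r$.

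For the $(1,2)$ block we need $\Sigma_1 L=T^{-1}K^{-1}L$. When $T^2=I_r$ we have $T^{-1}=T=\Sigma_1K$, so $T^{-1}K^{-1}L=\Sigma_1KK^{-1}L=\Sigma_1L$. Thus the $(1,2)$ condition holds automatically once the $(1,1)$ condition does, and (a) follows.

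A cleaner alternative is to use uniqueness of the group inverse: $\hat{A}=\hat{A}^{\scalebox{0.5}{\#}}$ iff $\hat{A}^3=\hat{A}$ and $\hat{A}$ satisfies (\ref{E3}) with $\hat{X}=\hat{A}$. Computing $\hat{A}^2=\hat{U}\begin{bmatrix}T^2&T\Sigma_1L\\0&0\end{bmatrix}\hat{U}^*$ confirms the same condition.

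\textbf{Part (b).} By (\ref{ACG}), $\hat{A}^{\tiny\textcircled{\#}}=\hat{U}\begin{bmatrix}T^{-1}&0\\0&0\end{bmatrix}\hat{U}^*$.

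Equality with $\hat{A}$ forces $\Sigma_1L=0$, hence $L=0$ because $\Sigma_1$ is invertible. It also forces $T=T^{-1}$, i.e. $T^2=I_r$. The converse is immediate.

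\textbf{Parts (c) and (d).} Here $\hat{A}^*=\hat{U}\begin{bmatrix}K^*\Sigma_1&0\\L^*\Sigma_1&0\end{bmatrix}\hat{U}^*$, using that $\Sigma_1$ is Hermitian.

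Comparing with $\hat{A}^{\scalebox{0.5}{\#}}$ or $\hat{A}^{\tiny\textcircled{\#}}$, the $(2,1)$ block gives $L^*\Sigma_1=0$, so $L=0$. Then $KK^*=I_r$, so $K$ is dual unitary, and in both cases the $(1,2)$ block vanishes.

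The $(1,1)$ block now reads $K^*\Sigma_1=K^{-1}\Sigma_1^{-1}=K^*\Sigma_1^{-1}$. Hence $\Sigma_1^2=I_r$.

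\textbf{Main obstacle.} The step needing care is concluding $\Sigma_1=I_r$ from $\Sigma_1^2=I_r$ in the dual setting. For each diagonal entry $\mu_i=a+\epsilon b$ with $a>0$, we have $\mu_i^2=a^2+2ab\epsilon=1$. This forces $a=1$ and then $b=0$, so $\mu_i=1$.

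Conversely, if $L=0$ and $\Sigma_1=I_r$, then $K$ is unitary and $\hat{A}=\hat{U}\begin{bmatrix}K&0\\0&0\end{bmatrix}\hat{U}^*$. Both $\hat{A}^{\scalebox{0.5}{\#}}$ and $\hat{A}^{\tiny\textcircled{\#}}$ equal $\hat{U}\begin{bmatrix}K^*&0\\0&0\end{bmatrix}\hat{U}^*=\hat{A}^*$, consistent with Remark~\ref{R3}.
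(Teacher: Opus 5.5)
Your proof is correct and follows the paper's own route: you cancel $\hat{U}$ and compare blocks of (\ref{FAN}) with (\ref{AGG}) and (\ref{ACG}), obtaining $L^*\Sigma_1=0$ and $K^*\Sigma_1=K^{-1}\Sigma_1^{-1}$ in part (c) exactly as the paper does. You go a bit further than the paper by writing out parts (a), (b), (d) and the converse directions, and by justifying $\Sigma_1^2=I_r\Rightarrow\Sigma_1=I_r$ entrywise for positive appreciable dual numbers, which the paper leaves implicit.
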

\begin{proof} 
 We only establish condition $(c)$, since the remaining equivalences are immediate from representations (\ref{FAN}), (\ref{AGG}), and (\ref{ACG}).\\
  \noindent $(c):$ Based on $\hat{A}^*=\hat{A}^{\scalebox{0.5}{\#}}$ from (\ref{FAN}) and (\Ref{AGG}), we have
  $$ L^*\Sigma_1=0 \ \ \text{and} \ \  K^*\Sigma_1=K^{-1}\Sigma_1^{-1}.$$
  Thus, the invertibility of $\Sigma_1$ implies $L=0$. 
  Then, it follows from (\ref{SS}) that $K^{-1}=K^*$, which yields $\Sigma_1^2=I_r$, and hence$\Sigma=I_r$.
\end{proof}

\noindent Next, starting from (\ref{AGG}) and (\ref{ACG}), we apply the D-H-S decomposition of dual generalized inverses to derive the expressions for the composite generalized inverses corresponding to DGGI and DCGI.

\begin{theorem}\label{y6}
  Let $ \hat{A} \in \mathbb{DC}_n^{\mathrm{CM}}$. Then the following statements hold: \\
\noindent  $(a)$ $\hat{A}^{\scalebox{0.5}{\#}}=\hat{A}(\hat{A}^3)^{\dagger}\hat{A}$;\ \ \ \ \ \ \ \ \ 
\noindent  $(b)$ $(\hat{A}^{\scalebox{0.5}{\#}})^{\scalebox{0.5}{\#}}=\hat{A}$;\\
\noindent  $(c)$ $(\hat{A}^{\scalebox{0.5}{\#}})^{\dagger}=\hat{A}^{\dagger}\hat{A}^3\hat{A}^{\dagger} $;\ \ \ \ \ \ \
\noindent  $(d)$ $(\hat{A}^*)^{\scalebox{0.5}{\#}}=(\hat{A}^{\scalebox{0.5}{\#}})^{*}$;\\
\noindent  $(e)$ $\hat{A}\hat{A}^{\scalebox{0.5}{\#}}(\hat{A}^{\scalebox{0.5}{\#}})^{\dagger}=\hat{A}^2\hat{A}^{\dagger}$;\ \ \ \ \
\noindent  $(f)$ $(\hat{A}^{\scalebox{0.5}{\#}})^{\dagger}\hat{A}^{\scalebox{0.5}{\#}}\hat{A}=\hat{A}$;\\
\noindent  $(g)$ $(\hat{A}^{*})^{\scalebox{0.5}{\#}}=(\hat{A}^{\dagger})^{\scalebox{0.5}{\#}}$;\ \ \ \ \ \  \ \ \ \
\noindent  $(h)$ $(\hat{A}^{\dagger})^{\scalebox{0.5}{\#}}=(\hat{A}^*)^{\scalebox{0.5}{\#}}\hat{A}^*\hat{A}\hat{A}^*(\hat{A}^*)^{\scalebox{0.5}{\#}}$.
\end{theorem}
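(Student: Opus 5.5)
The plan is to reduce every identity to block computations in one fixed D-H-S frame. By Theorem~\ref{J1} and Remark~\ref{ANA}, $\hat{A}\in\mathbb{DC}_n^{\mathrm{CM}}$ has the form (\ref{FAN}) with $K$ invertible. Put $T=\Sigma_1K$ (invertible, with invertible standard part $\Sigma_{\mathrm{1s}}K_1$) and $S=\Sigma_1L$, so that
\[
\hat{A}=\hat{U}\begin{bmatrix} T & S\\ 0&0\end{bmatrix}\hat{U}^*,\qquad
\hat{A}^{\scalebox{0.5}{\#}}=\hat{U}\begin{bmatrix} T^{-1} & T^{-2}S\\ 0&0\end{bmatrix}\hat{U}^* .
\]
The second formula is (\ref{AGG}) rewritten. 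Since $\hat{U}$ is dual unitary, it can be stripped off every product, adjoint and inverse. So each item becomes an identity between $2\times2$ block dual matrices.

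The first step is an auxiliary lemma giving the DMPGI and the DGGI for the two shapes that occur. Let $X,Y$ be dual blocks such that $D=XX^*+YY^*$ has invertible standard part. Then $\hat{B}=\hat{U}\begin{bmatrix} X&Y\\0&0\end{bmatrix}\hat{U}^*$ has
\[
\hat{B}^{\dagger}=\hat{U}\begin{bmatrix} X^*D^{-1}&0\\ Y^*D^{-1}&0\end{bmatrix}\hat{U}^*,
\]
which I will check directly against the four equations (\ref{AN}); the corresponding statement holds for the adjoint shape. If moreover $X$ is invertible, then
\[
\begin{bmatrix} X&Y\\0&0\end{bmatrix}^{\scalebox{0.5}{\#}}=\begin{bmatrix} X^{-1}&X^{-2}Y\\0&0\end{bmatrix},\qquad
\begin{bmatrix} P&0\\Q&0\end{bmatrix}^{\scalebox{0.5}{\#}}=\begin{bmatrix} P^{-1}&0\\QP^{-2}&0\end{bmatrix},
\]
verified from (\ref{E3}); uniqueness then identifies them. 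This lemma also settles existence of every inverse appearing in the statement. In particular, $\hat{A}^3=\hat{U}\begin{bmatrix} T^3&T^2S\\0&0\end{bmatrix}\hat{U}^*$ has appreciably full-rank top row $T^2[\,T\ \ S\,]$, so $(\hat{A}^3)^{\dagger}$ exists. Likewise $\hat{A}^{\scalebox{0.5}{\#}}$, $\hat{A}^*$ and $\hat{A}^{\dagger}$ all have one of the two shapes with an invertible diagonal block.

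The items then follow by routine substitution. For (a), write $(\hat{A}^3)^{\dagger}=\begin{bmatrix} W_1&0\\W_2&0\end{bmatrix}$ (frame suppressed) and use $[\,T^3\ \ T^2S\,]=T^2[\,T\ \ S\,]$ to factor $D$. For (b) and (d), apply the group-inverse formulas twice. For (c), (e) and (f), compute $(\hat{A}^{\scalebox{0.5}{\#}})^{\dagger}$ with $X=T^{-1}$ and $Y=T^{-2}S$, so that $D=T^{-1}(I+T^{-1}SS^*T^{-*})T^{-*}$. Then compare with $\hat{A}^{\dagger}\hat{A}^3\hat{A}^{\dagger}$, using $KK^*+LL^*=I_r$ from (\ref{SS}), i.e.\ $TT^*+SS^*=\Sigma_1^2$.

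For (g) and (h), use the lower-triangular formula with $(P,Q)=(K^*\Sigma_1,\,L^*\Sigma_1)$ for $\hat{A}^*$ and $(P,Q)=(K^*\Sigma_1^{-1},\,L^*\Sigma_1^{-1})$ for $\hat{A}^{\dagger}$. Then multiply out the right-hand side of (h).

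The main obstacle I expect is (g). The two computations give top-left blocks $\Sigma_1^{-1}K^{-*}$ and $\Sigma_1K^{-*}$ respectively. These coincide only when $\Sigma_1^2=I_r$; already the scalar case $\hat{A}=2$ gives $1/2\neq 2$. So before writing anything I would re-check the intended statement, perhaps $(\hat{A}^{\dagger})^{\scalebox{0.5}{\#}}$ versus $(\hat{A}^{\scalebox{0.5}{\#}})^{\dagger}$ or an extra hypothesis. I would prove (h), which is the correct relation between $(\hat{A}^{\dagger})^{\scalebox{0.5}{\#}}$ and $(\hat{A}^*)^{\scalebox{0.5}{\#}}$. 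The remaining difficulty in (c) and (f) is the bookkeeping of $D^{-1}$, which I will handle by factoring out $T^{-1}$ and $T^{-*}$ before inverting.
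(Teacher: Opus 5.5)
Your frame and block formulas are the same machinery the paper uses. It also works from (\ref{FAN}), (\ref{MP4}), (\ref{AGG}) and the lower-triangular group-inverse form (\ref{DD1}). Your objection to (g) is right, and it exposes an error in the paper. Its proof of (g) derives $(\hat{A}^{\dagger})^{\scalebox{0.5}{\#}}=\hat{U}\begin{bmatrix}\Sigma_1(K^{-1})^*&0\\ L^*(K^{-1})^*\Sigma_1(K^{-1})^*&0\end{bmatrix}\hat{U}^*$ and then declares it equal to (\ref{DD1}), which has $\Sigma_1^{-1}$ in place of $\Sigma_1$. The two agree only if $\Sigma_1=I_r$, and $\hat{A}=2$ is a counterexample. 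Proving (h) by direct multiplication, as you plan, is the right repair, since the paper's argument for (h) invokes (g).

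The gap is (f), which you list among the items that follow by routine substitution. As printed, (f) is false, so that substitution cannot succeed. In your notation, $D=T^{-2}(TT^*+SS^*)(T^*)^{-2}=T^{-2}\Sigma_1^2(T^*)^{-2}$. This gives $(\hat{A}^{\scalebox{0.5}{\#}})^{\dagger}=\hat{U}\begin{bmatrix}T^*\Sigma_1^{-2}T^2&0\\ S^*\Sigma_1^{-2}T^2&0\end{bmatrix}\hat{U}^*$, and also $\hat{A}^{\scalebox{0.5}{\#}}\hat{A}=\hat{U}\begin{bmatrix}I_r&T^{-1}S\\0&0\end{bmatrix}\hat{U}^*$. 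So $(\hat{A}^{\scalebox{0.5}{\#}})^{\dagger}\hat{A}^{\scalebox{0.5}{\#}}\hat{A}$ has lower-left block $S^*\Sigma_1^{-2}T^2=L^*K\Sigma_1K$, which vanishes only when $L=0$.

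A concrete counterexample is the idempotent $A=\begin{bmatrix}1&1\\0&0\end{bmatrix}$. Here $A^{\scalebox{0.5}{\#}}=A$ and $A^{\dagger}=\frac12\begin{bmatrix}1&0\\1&0\end{bmatrix}$, so $(A^{\scalebox{0.5}{\#}})^{\dagger}A^{\scalebox{0.5}{\#}}A=A^{\dagger}A=\frac12\begin{bmatrix}1&1\\1&1\end{bmatrix}\neq A$.

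What your computation actually proves is $(\hat{A}^{\scalebox{0.5}{\#}})^{\dagger}\hat{A}^{\scalebox{0.5}{\#}}\hat{A}=\hat{A}^{\dagger}\hat{A}^2$, the true mirror image of (e). The paper's ``similarly to (e)'' argument lands there too, since it reduces to $\hat{A}^{\dagger}\hat{A}^3(\hat{A}^3)^{\dagger}\hat{A}^2=\hat{A}^{\dagger}\hat{A}^2$. So (f), like (g), must be flagged and corrected rather than proved; as printed it holds if and only if $L=0$. The other six items (a)--(e) and (h) go through as you outline.
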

\begin{proof}
By Lemma~\ref{J1}, if  $ \hat{A} \in \mathbb{DC}_n^{\mathrm{CM}}$, then $\Sigma_2=0$ and $K$ is invertible. 
Consequently,  $\hat{A}$ takes the form (\ref{FAN}), and thus $\hat{A}^{\dagger}$ exists.\\
  \noindent $(a):$  By (\ref{AN}), it can be shown that
  $$ (\hat{A}^{3})^{\dagger}=\hat{U}\begin{bmatrix}
    K^*\Sigma_1^{-1}(K^{-1}\Sigma_1^{-1})^2 & 0 \\
L^{*}\Sigma_1^{-1}(K^{-1}\Sigma_1^{-1})^2 & 0
\end{bmatrix}\hat{U}^*.$$
This implies that
\begin{align*}
  \hat{A}(\hat{A}^3)^{\dagger}\hat{A}&=\hat{U}\begin{bmatrix}
\Sigma_1 K & \Sigma_1 L\\
0 & 0
\end{bmatrix}\begin{bmatrix}
  K^*\Sigma_1^{-1}(K^{-1}\Sigma_1^{-1})^2 & 0 \\
L^{*}\Sigma_1^{-1}(K^{-1}\Sigma_1^{-1})^2 & 0
\end{bmatrix}\begin{bmatrix}
\Sigma_1 K & \Sigma_1 L\\
0 & 0
\end{bmatrix}\hat{U}^* \\
&= \hat{U}\begin{bmatrix}
(K^{-1}\Sigma_1^{-1})^2 & 0 \\
0 & 0
\end{bmatrix}\begin{bmatrix}
\Sigma_1 K & \Sigma_1 L \\
0 & 0
\end{bmatrix}\hat{U}^*\\
&=\hat{U}\begin{bmatrix}
    K^{-1}\Sigma_1^{-1} & K^{-1}\Sigma_1^{-1}K^{-1}L  \\ 0 & 0
    \end{bmatrix}\hat{U}^*,
\end{align*}
which together with (\ref{ACG}) gives that $\hat{A}^{\scalebox{0.5}{\#}}=\hat{A}(\hat{A}^3)^{\dagger}\hat{A}$.\\
  \noindent $(b):$  By combining the definition of the equation system in (\ref{E3}) with the results in (\ref{AGG}), the conclusion follows.\\
  \noindent $(c):$ This can be directly demonstrated by using  (\ref{AN}) of $\hat{A}^{\scalebox{0.5}{\#}}$ given in (\ref{AGG}), which takes the form 
   \begin{align*}
    (\hat{A}^{\scalebox{0.5}{\#}})^{\dagger}=\hat{U}\begin{bmatrix}
      K^*K \Sigma_1K & 0 \\ L^*K \Sigma_1K & 0
    \end{bmatrix}\hat{U}^*.
   \end{align*}
   Subsequently, simple calculations using (\ref{MP4}) and (\ref{FAN}) yield $(\hat{A}^{\scalebox{0.5}{\#}})^{\dagger}=\hat{A}^{\dagger}\hat{A}^3\hat{A}^{\dagger} $.\\
  \noindent $(d):$ From (\ref{E3}) and (\ref{FAN}), it follows that 
  \begin{align}\label{DD1}
    (\hat{A}^*)^{\scalebox{0.5}{\#}}=\hat{U}\begin{bmatrix}
      \Sigma_1^{-1}(K^{-1})^* & 0 \\
      L^*(K^{-1})^*\Sigma_1^{-1}(K^{-1})^*  & 0
    \end{bmatrix}\hat{U}^*.
  \end{align}
Combining this with (\ref{ACG}), we obtain $(\hat{A}^*)^{\scalebox{0.5}{\#}}=(\hat{A}^{\scalebox{0.5}{\#}})^{*}$. \\ 
\noindent $(e):$ It follows from $(a)$ and $(c)$ that 
$\hat{A}\hat{A}^{\scalebox{0.5}{\#}}(\hat{A}^{\scalebox{0.5}{\#}})^{\dagger}=\hat{A}\hat{A}(\hat{A}^3)^{\dagger}\hat{A}\hat{A}^{\dagger}\hat{A}^3\hat{A}^{\dagger}=\hat{A}^2(\hat{A}^3)^{\dagger}\hat{A}^3\hat{A}^{\dagger} $.
Furthermore, $(a)$ implies that
$$(\hat{A}^3)^{\dagger}\hat{A}^3 =\hat{U}\begin{bmatrix}
    K^*\Sigma_1^{-1}(K^{-1}\Sigma_1^{-1})^2 & 0 \\
L^{*}\Sigma_1^{-1}(K^{-1}\Sigma_1^{-1})^2 & 0
\end{bmatrix} \begin{bmatrix}
  (\Sigma_1K)^3 &  (\Sigma_1K)^2\Sigma_1L \\
  0 & 0 
\end{bmatrix}\hat{U}^*
=\hat{U}\begin{bmatrix}
  K^*K & K^*L\\
  L^*K & L^*L 
  \end{bmatrix}\hat{U}^*,
$$
from which it can be further deduced that $\hat{A}(\hat{A}^3)^{\dagger}\hat{A}^3=\hat{A}$.
Thus, $$\hat{A}\hat{A}^{\scalebox{0.5}{\#}}(\hat{A}^{\scalebox{0.5}{\#}})^{\dagger}=\hat{A}[\hat{A}(\hat{A}^3)^{\dagger}\hat{A}^3]\hat{A}^{\dagger}=\hat{A}[\hat{A}]\hat{A}^{\dagger}=\hat{A}^2\hat{A}^{\dagger}.$$
\noindent $(f):$ Proof can be obtained similarly to $(e)$.\\
\noindent $(g):$ Substituting (\ref{MP4}) into (\ref{E1}) yields
\begin{align*}
  (\hat{A}^{\dagger})^{\scalebox{0.5}{\#}}= \hat{U}\begin{bmatrix}
      \Sigma_1(K^{-1})^* & 0 \\
      L^*(K^{-1})^*\Sigma_1(K^{-1})^*  & 0
    \end{bmatrix}\hat{U}^*,
\end{align*}
and combining this with (\ref{DD1}) proves $(\hat{A}^{*})^{\scalebox{0.5}{\#}}=(\hat{A}^{\dagger})^{\scalebox{0.5}{\#}}$.\\
\noindent $(h):$ Together with $(g)$, (\ref{FAN}) and (\ref{DD1}), this establishes $(\hat{A}^{\dagger})^{\scalebox{0.5}{\#}}=(\hat{A}^*)^{\scalebox{0.5}{\#}}\hat{A}^*\hat{A}\hat{A}^*(\hat{A}^*)^{\scalebox{0.5}{\#}}$.
\end{proof}

\begin{theorem}\label{y7}
Let $ \hat{A} \in \mathbb{DC}_n^{\mathrm{CM}}$. Then the following statements hold: \\
\noindent  $(a)$ $\hat{A}^{\tiny\textcircled{\#}}=\hat{A}^{\scalebox{0.5}{\#}}\hat{A}\hat{A}^{\dagger}$; \ \ \ \ \ \ \ \ \ \ \ \ \ \ 
\noindent  $(b)$ $(\hat{A}^{\tiny\textcircled{\#}})^{\dagger}=\hat{A}^2\hat{A}^{\dagger}$;\\
\noindent  $(c)$ $(\hat{A}^{\tiny\textcircled{\#}})^{\dagger}=(\hat{A}^{\tiny\textcircled{\#}})^{\scalebox{0.5}{\#}}$;\ \ \ \ \ \ \ \ \ \ \ \ \ 
\noindent  $(d)$ $(\hat{A}^{\tiny\textcircled{\#}})^{\tiny\textcircled{\#}}=\hat{A}^2\hat{A}^{\dagger}$; \\
\noindent  $(e)$ $\hat{A}^{\tiny\textcircled{\#}}\hat{A}=\hat{A}^{\scalebox{0.5}{\#}}\hat{A}$; \ \ \ \ \ \ \ \ \ \ \ \ \ \ \ 
\noindent  $(f)$ $(\hat{A}^{\tiny\textcircled{\#}})^2\hat{A}=\hat{A}^{\scalebox{0.5}{\#}}$;\\
\noindent  $(g)$ $\hat{A}^{\tiny\textcircled{\#}}(\hat{A}^{\tiny\textcircled{\#}})^{\dagger}=(\hat{A}^{\tiny\textcircled{\#}})^{\dagger}\hat{A}^{\tiny\textcircled{\#}}$.
\end{theorem}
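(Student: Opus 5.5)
The plan is to work entirely within the representation (\ref{FAN}) of Remark~\ref{ANA}. By Theorem~\ref{J1}, $\hat{A}\in\mathbb{DC}_n^{\mathrm{CM}}$ forces $\Sigma_2=0$ and $K$ invertible. Hence $\hat{A}$, $\hat{A}^{\dagger}$ (from (\ref{MP4})), $\hat{A}^{\scalebox{0.5}{\#}}$ (from (\ref{AGG})) and $\hat{A}^{\tiny\textcircled{\#}}$ (from (\ref{ACG})) are all written with the same $\hat{U}$. Each identity then reduces to multiplying $2\times 2$ block matrices and comparing blocks. Throughout I set
\[
W:=K^{-1}\Sigma_1^{-1},
\]
which is an invertible dual matrix, so that
\[
\hat{A}^{\tiny\textcircled{\#}}=\hat{U}\begin{bmatrix} W & 0\\ 0 & 0\end{bmatrix}\hat{U}^*.
\]

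The first step is to record three auxiliary products. Using $KK^*+LL^*=I_r$, I expect
\[
\hat{A}\hat{A}^{\dagger}=\hat{U}\begin{bmatrix} I_r & 0\\ 0&0\end{bmatrix}\hat{U}^*,\qquad
\hat{A}^{\scalebox{0.5}{\#}}\hat{A}=\hat{U}\begin{bmatrix} I_r & K^{-1}L\\ 0&0\end{bmatrix}\hat{U}^*.
\]
The third is
\[
\hat{A}^2\hat{A}^{\dagger}=\hat{U}\begin{bmatrix} \Sigma_1K & 0\\ 0&0\end{bmatrix}\hat{U}^*,
\]
since the $(1,1)$ block equals $(\Sigma_1K)^2K^*\Sigma_1^{-1}+\Sigma_1K\Sigma_1LL^*\Sigma_1^{-1}=\Sigma_1K\Sigma_1(KK^*+LL^*)\Sigma_1^{-1}=\Sigma_1K$.

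With these products in hand, (a) follows by right-multiplying (\ref{AGG}) by $\hat{A}\hat{A}^{\dagger}$, which kills the $(1,2)$ block. Item (e) is a direct product: $\begin{bmatrix} W&0\\0&0\end{bmatrix}\begin{bmatrix}\Sigma_1K&\Sigma_1L\\0&0\end{bmatrix}=\begin{bmatrix} I_r & K^{-1}L\\0&0\end{bmatrix}$, and this matches $\hat{A}^{\scalebox{0.5}{\#}}\hat{A}$. Item (f) is the analogous computation, now with $W^2\Sigma_1K=W$ and $W^2\Sigma_1L=K^{-1}\Sigma_1^{-1}K^{-1}L$.

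For (b), (c) and (d) I will use a single lemma. If $W$ is an invertible dual matrix, then
\[
\hat{U}\begin{bmatrix} W^{-1}&0\\0&0\end{bmatrix}\hat{U}^*
\]
satisfies the defining systems (\ref{AN}), (\ref{E3}) and (\ref{En}) for $\hat{U}\begin{bmatrix} W&0\\0&0\end{bmatrix}\hat{U}^*$. This is a verification of a few block equations: the products $\hat{X}\hat{Y}$ and $\hat{Y}\hat{X}$ both equal $\hat{U}\begin{bmatrix} I_r&0\\0&0\end{bmatrix}\hat{U}^*$, which is Hermitian. By uniqueness of the DMPGI, DGGI and DCGI, this matrix is therefore the MP, group and core inverse simultaneously. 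Since $W^{-1}=\Sigma_1K$, comparing with $\hat{A}^2\hat{A}^{\dagger}$ above gives (b), (c) and (d) at once. Finally, (g) follows because both sides equal $\hat{U}\begin{bmatrix} W(\Sigma_1K)&0\\0&0\end{bmatrix}\hat{U}^*=\hat{U}\begin{bmatrix} I_r&0\\0&0\end{bmatrix}\hat{U}^*$.

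The main obstacle is justifying the appeal to uniqueness in the dual setting. Existence of each inverse for $\hat{A}^{\tiny\textcircled{\#}}$ must be available so that the verified candidate is \emph{the} inverse. I would handle this by noting that $\hat{A}^{\tiny\textcircled{\#}}$ has the form (\ref{FAN}) with the appropriate block structure, so it is itself in $\mathbb{DC}_n^{\mathrm{CM}}$; Theorem~\ref{J1} then guarantees that all three inverses exist and are unique. Everything else is routine block multiplication. The only care needed is that products of dual matrices remain non-commutative, so the order of $K$, $\Sigma_1$ and $L$ must be preserved in every product.
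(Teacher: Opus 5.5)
Your proposal is correct and is essentially the paper's argument: both read everything off the common-$\hat{U}$ block forms (\ref{FAN}), (\ref{MP4}), (\ref{AGG}), (\ref{ACG}), and both identify $(\hat{A}^{\tiny\textcircled{\#}})^{\dagger}=(\hat{A}^{\tiny\textcircled{\#}})^{\scalebox{0.5}{\#}}=\hat{U}\,\mathrm{diag}(\Sigma_1K,0)\,\hat{U}^*$ by checking the defining equations; since the exhibited solution itself gives existence, your detour through (\ref{FAN}) for $\hat{A}^{\tiny\textcircled{\#}}$ is not needed. The only difference is that you obtain (d)--(g) by direct block products, while the paper derives them algebraically from (a)--(c); your direct check of (d) also avoids a slip in the paper's chain, which equates $(\hat{A}^{\tiny\textcircled{\#}})^{\dagger}\hat{A}^{\tiny\textcircled{\#}}(\hat{A}^{\tiny\textcircled{\#}})^{\dagger}$ with $\hat{A}^{\tiny\textcircled{\#}}$ rather than with $(\hat{A}^{\tiny\textcircled{\#}})^{\dagger}$.
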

\begin{proof}
  Since $ \hat{A} \in \mathbb{DC}_n^{\mathrm{CM}}$, it can be expressed in the form of (\ref{FAN}), from which it follows that $\hat{A}^{\dagger}$ exists.\\
  \noindent $(a):$ It is a direct consequence of (\ref{MP4}), (\ref{FAN}), (\ref{AGG}) and (\ref{ACG}).\\ 
  \noindent $(b):$ Observing (\ref{ACG}) gives 
             \begin{align*}
              (\hat{A}^{\tiny\textcircled{\#}})^{\dagger}=\hat{U}\begin{bmatrix}
                \Sigma_1 K & 0 \\ 0 & 0
              \end{bmatrix}\hat{U}^*.
             \end{align*}
             Then, together with (\ref{MP4}) and (\ref{FAN}), the result follows from a straightforward calculation.\\
  \noindent $(c):$  Combining the definition of the equation system in (\ref{E3}) and using (\ref{ACG}), we obtain 
   $$ (\hat{A}^{\tiny\textcircled{\#}})^{\scalebox{0.5}{\#}}=\hat{U}\begin{bmatrix}
                \Sigma_1 K & 0 \\ 0 & 0
              \end{bmatrix}\hat{U}^*.$$
              Recalling $(b)$, the result follows.\\    
  \noindent $(d):$ Based on  $(a)$, we have $(\hat{A}^{\tiny\textcircled{\#}})^{\tiny\textcircled{\#}}=(\hat{A}^{\tiny\textcircled{\#}})^{\scalebox{0.5}{\#}}\hat{A}^{\tiny\textcircled{\#}}(\hat{A}^{\tiny\textcircled{\#}})^{\dagger}$,
  which together with $(b)$ and $(c)$ leads to 
  $$(\hat{A}^{\tiny\textcircled{\#}})^{\tiny\textcircled{\#}}= (\hat{A}^{\tiny\textcircled{\#}})^{\dagger}\hat{A}^{\tiny\textcircled{\#}}(\hat{A}^{\tiny\textcircled{\#}})^{\dagger}
  =\hat{A}^{\tiny\textcircled{\#}}=\hat{A}^2\hat{A}^{\dagger}.$$
  \noindent $(e):$ Multiplying by $\hat{A}$ on the right in $(a)$ yields $\hat{A}^{\tiny\textcircled{\#}}\hat{A}=\hat{A}^{\scalebox{0.5}{\#}}\hat{A}\hat{A}^{\dagger}\hat{A}=\hat{A}^{\scalebox{0.5}{\#}}\hat{A}$.\\
  \noindent $(f):$ Combining with $(a)$, we obtain
  $$(\hat{A}^{\tiny\textcircled{\#}})^2\hat{A}=\hat{A}^{\scalebox{0.5}{\#}}\hat{A}\hat{A}^{\dagger}\hat{A}^{\scalebox{0.5}{\#}}\hat{A}\hat{A}^{\dagger}\hat{A}
  =\hat{A}^{\scalebox{0.5}{\#}}\hat{A}\hat{A}^{\dagger}\hat{A}^{\scalebox{0.5}{\#}}\hat{A}
  =\hat{A}^{\scalebox{0.5}{\#}}\hat{A}\hat{A}^{\dagger}\hat{A}\hat{A}^{\scalebox{0.5}{\#}}
  =\hat{A}^{\scalebox{0.5}{\#}}\hat{A}\hat{A}^{\scalebox{0.5}{\#}}
  =\hat{A}^{\scalebox{0.5}{\#}}.$$
  \noindent $(g):$ From $(a)$ and $(b)$, it follows that
  \begin{align*}
    \hat{A}^{\tiny\textcircled{\#}}(\hat{A}^{\tiny\textcircled{\#}})^{\dagger}&=\hat{A}^{\scalebox{0.5}{\#}}\hat{A}\hat{A}^{\dagger}\hat{A}^2\hat{A}^{\dagger}=\hat{A}^{\scalebox{0.5}{\#}}\hat{A}^2\hat{A}^{\dagger}=\hat{A}\hat{A}^{\scalebox{0.5}{\#}}\hat{A}\hat{A}^{\dagger}=\hat{A}\hat{A}^{\dagger},\\
(\hat{A}^{\tiny\textcircled{\#}})^{\dagger}\hat{A}^{\tiny\textcircled{\#}}&=\hat{A}^2\hat{A}^{\dagger}\hat{A}^{\scalebox{0.5}{\#}}\hat{A}\hat{A}^{\dagger}=\hat{A}^2\hat{A}^{\dagger}\hat{A}\hat{A}^{\scalebox{0.5}{\#}}\hat{A}^{\dagger}=\hat{A}^2\hat{A}^{\scalebox{0.5}{\#}}\hat{A}^{\dagger}
=\hat{A}\hat{A}^{\scalebox{0.5}{\#}}\hat{A}\hat{A}^{\dagger}=\hat{A}\hat{A}^{\dagger},
  \end{align*}
i.e., $\hat{A}^{\tiny\textcircled{\#}}(\hat{A}^{\tiny\textcircled{\#}})^{\dagger}=(\hat{A}^{\tiny\textcircled{\#}})^{\dagger}\hat{A}^{\tiny\textcircled{\#}}$.
\end{proof}

\noindent It is worth noting that although Theorem~\ref{y7} (a) has been presented in (Wang and Gao \citeyear{Wang1}; Wang et al. \citeyear{Wang5}), we provide an alternative proof based on the D-H-S decomposition, which relies solely on direct multiplication of the relevant expressions. 
The proofs of Theorems~\ref{y6} and \ref{y7} further illustrate the efficiency and conciseness of the D-H-S decomposition in studying dual matrices. \\

\noindent Recently, dual partial orders have attracted increasing research attention, and the D-H-S decomposition can also be applied to their study. 
In what follows, we examine the dual-minus partial order (Gao et al.\citeyear{Gao1}) and the D--core partial order (Sitha and Mosi{\'c} \citeyear{Sitha1}) through this decomposition to demonstrate its applicability.

\begin{definition}\emph{(Gao et al.\citeyear{Gao1}; Sitha and Mosi{\'c} \citeyear{Sitha1})}\label{DDD}
  Let $\hat{A}, \hat{B} \in \mathbb{DC}^{n \times n}$. Then\\
\noindent $(a)$ $\hat{A} \overset{D-\tiny\textcircled{\#}}{\leq} \hat{B}$: $\operatorname{Ind}(\hat{A})=\operatorname{Ind}(\hat{B})=1$, $\hat{A}^{\tiny\textcircled{\#}}\hat{A}=\hat{A}^{\tiny\textcircled{\#}}\hat{B}$ and $\hat{A}\hat{A}^{\tiny\textcircled{\#}}=\hat{B}\hat{A}^{\tiny\textcircled{\#}}$;\\
\noindent $(b)$ $\hat{A} \overset{D}{\leq} \hat{B}$: $A_s \leq B_s$ and the DMPGI $\hat{A}^{\dagger},\hat{B}^{\dagger}$, $(\hat{B} - \hat{A})^{\dagger}$ all exist.
\end{definition}

\noindent Based on (\ref{FAN}) and Definition \ref{DDD}, we derive the corresponding form of \(\hat{B}\) under the D--core partial order using the D-H-S decomposition.
\begin{theorem}\label{4.19}
   Let $\hat{A}, \hat{B} \in \mathbb{DC}_n^{CM}$. 
  Then, when $\hat{A}$  is of the form \emph{(\ref{FAN})}, $\hat{A} \overset{D-\tiny\textcircled{\#}}{\leq} \hat{B}$ if and only if there exists $\hat{U} \in \mathbb{DC}_n^U$ such that
  \begin{align}\label{DDD1}
   \hat{A} =\hat{U}\begin{bmatrix}
\Sigma_1 K & \Sigma_1 L\\
0 & 0
\end{bmatrix}\hat{U}^* \ \ \text{and} \ \
    \hat{B} =\hat{U}\begin{bmatrix}
\Sigma_1 K & \Sigma_1 L\\
0 & P
\end{bmatrix}\hat{U}^*,  
  \end{align}
where $K \in \mathbb{DC}^{r \times r}$, $L \in \mathbb{DC}^{r \times (n-r)}$, $P \in \mathbb{DC}^{(n-r) \times (n-r)}$ satisfy
$$ KK^* + LL^* = I_r, \ \ \operatorname{Rank}(P^2)=\operatorname{Rank}(P),$$
and $\Sigma_1 = \Sigma_{\mathrm{1s}} + \epsilon \Sigma_{\mathrm{1d}} = \operatorname{diag}(\mu_1, \ldots, \mu_r)$ with $\mu_1 \geq \cdots \geq \mu_r$ being positive appreciable dual numbers.
\end{theorem}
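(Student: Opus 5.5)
Fix $\hat{U}$ from (\ref{FAN}) and write $\hat{B}=\hat{U}\begin{bmatrix} B_1 & B_2\\ B_3 & B_4\end{bmatrix}\hat{U}^*$ in the same block partition. By Theorem~\ref{J1}, $\Sigma_2=0$ and $K$ is invertible, and by Theorem~\ref{tL2} we have $\hat{A}^{\tiny\textcircled{\#}}=\hat{U}\begin{bmatrix} K^{-1}\Sigma_1^{-1} & 0\\ 0 & 0\end{bmatrix}\hat{U}^*$. The two defining equations of $\overset{D-\tiny\textcircled{\#}}{\leq}$ in Definition~\ref{DDD} then become block equations that can be read off directly.

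For necessity, the first equation $\hat{A}^{\tiny\textcircled{\#}}\hat{A}=\hat{A}^{\tiny\textcircled{\#}}\hat{B}$ gives
\[
\begin{bmatrix} K^{-1}\Sigma_1^{-1}\Sigma_1K & K^{-1}\Sigma_1^{-1}\Sigma_1L\\ 0&0\end{bmatrix}=\begin{bmatrix} K^{-1}\Sigma_1^{-1}B_1 & K^{-1}\Sigma_1^{-1}B_2\\ 0&0\end{bmatrix}.
\]
Since $K$ and $\Sigma_1$ are invertible dual matrices, this forces $B_1=\Sigma_1K$ and $B_2=\Sigma_1L$. The second equation $\hat{A}\hat{A}^{\tiny\textcircled{\#}}=\hat{B}\hat{A}^{\tiny\textcircled{\#}}$ reads $\begin{bmatrix} I_r&0\\0&0\end{bmatrix}=\begin{bmatrix} B_1K^{-1}\Sigma_1^{-1}&0\\ B_3K^{-1}\Sigma_1^{-1}&0\end{bmatrix}$. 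This gives $B_3=0$, and the relation $B_1=\Sigma_1K$ is consistent with it. Setting $P:=B_4$ yields (\ref{DDD1}).

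It remains to show $\operatorname{Rank}(P^2)=\operatorname{Rank}(P)$, which should follow from $\operatorname{Ind}(\hat{B})=1$. I would interpret the rank condition on $P$ as the index-one condition $\mathcal{R}(P^2)=\mathcal{R}(P)$, i.e.\ $P$ has dual index $1$, since for dual matrices rank equality alone is delicate. The cleanest route is via group inverses: $\hat{B}$ is block upper triangular with invertible $(1,1)$ block $\Sigma_1K$. If $\hat{B}^{\scalebox{0.5}{\#}}$ exists, then writing $\hat{B}^{\scalebox{0.5}{\#}}=\hat{U}\begin{bmatrix}Y_1&Y_2\\Y_3&Y_4\end{bmatrix}\hat{U}^*$ and using the equations (\ref{E3}), I expect to show that $Y_3=0$ and that $Y_4$ is the group inverse of $P$. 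Commutativity forces $PY_3=Y_3\Sigma_1K$, and the remaining equations then push $Y_3$ to zero. Conversely, if $P^{\scalebox{0.5}{\#}}$ exists, the standard formula
\[
\begin{bmatrix} (\Sigma_1K)^{-1} & X\\ 0 & P^{\scalebox{0.5}{\#}}\end{bmatrix},\qquad X=(\Sigma_1K)^{-2}\Sigma_1L(I-PP^{\scalebox{0.5}{\#}})-(\Sigma_1K)^{-1}\Sigma_1LP^{\scalebox{0.5}{\#}},
\]
can be verified to satisfy (\ref{E3}) by direct multiplication. By Theorem~\ref{J1}(c), existence of the group inverse is equivalent to index one, which settles both directions of the $P$-condition.

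For sufficiency, assume (\ref{DDD1}). Then $\operatorname{Ind}(\hat{A})=1$ holds by hypothesis, and $\operatorname{Ind}(\hat{B})=1$ follows from the converse construction above. Both defining equations follow by reversing the block computations: $\hat{A}^{\tiny\textcircled{\#}}\hat{B}$ only sees the first block row of $\hat{B}$, and $\hat{B}\hat{A}^{\tiny\textcircled{\#}}$ only sees its first block column. The main obstacle is the index transfer between $\hat{B}$ and $P$ in the dual setting. The danger is that $Y_3$ cannot be directly forced to zero because dual matrices have zero divisors, so the argument is safest through verifying the explicit group inverse formula rather than through rank counting.
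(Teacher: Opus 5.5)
Your derivation of the block form of $\hat{B}$ is the same as the paper's. The paper only asserts that part $(a)$ of Definition~\ref{DDD} forces it; your block computation with the DCGI from Theorem~\ref{tL2} supplies the details. For the condition on $P$ you take a genuinely different route.

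The paper works with the dual rank. Eliminating $\Sigma_1L$ against the invertible block $\Sigma_1K$ gives $\operatorname{Rank}(\hat{B})=r+\operatorname{Rank}(P)$ and $\operatorname{Rank}(\hat{B}^2)=r+\operatorname{Rank}(P^2)$. It then reads $\operatorname{Ind}(\hat{B})=1$ as $\operatorname{Rank}(\hat{B}^2)=\operatorname{Rank}(\hat{B})$. Sufficiency is never argued, which is harmless because $\hat{B}\in\mathbb{DC}_n^{\mathrm{CM}}$ is a hypothesis.

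You instead transfer the group inverse between $\hat{B}$ and $P$. This proves the sharper fact that a matrix of the form (\ref{DDD1}) has index one if and only if $P$ does. It also avoids the properties of dual rank that the paper uses without comment: invariance under invertible dual factors, and $\operatorname{Ind}=1\Rightarrow$ equal ranks.

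Your caution about rank is justified. Take $P=\begin{bmatrix}0&1\\ \epsilon&0\end{bmatrix}$. Then $P^2=\epsilon I_2$, while $P$ has singular values $1,\epsilon$, so $\operatorname{Rank}(P^2)=\operatorname{Rank}(P)=2$. Yet $P_s$ is nilpotent, so $P$ does not have index one. The stated rank condition is therefore strictly weaker than index one, and the theorem's sufficiency direction holds only because $\hat{B}$ is assumed to lie in $\mathbb{DC}_n^{\mathrm{CM}}$. Your formulation would let one drop that assumption. The paper's route, in exchange, reaches the stated condition in two lines.

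Two things need tightening.

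First, the step $Y_3=0$, which you leave as an expectation and then call dangerous, is both easy and indispensable. The explicit triangular formula only shows that existence of $P^{\scalebox{0.5}{\#}}$ implies existence of $\hat{B}^{\scalebox{0.5}{\#}}$, so it cannot replace the necessity argument. The argument goes as follows:
\begin{itemize}
\item The $(2,1)$ block of $\hat{B}\hat{B}^{\scalebox{0.5}{\#}}\hat{B}=\hat{B}$ reads $PY_3\Sigma_1K=0$.
\item Cancelling the invertible $\Sigma_1K$ gives $PY_3=0$.
\item Commutativity then gives $Y_3\Sigma_1K=PY_3=0$, hence $Y_3=0$.
\item With $Y_3=0$, the $(2,2)$ blocks of (\ref{E3}) give $PY_4P=P$, $Y_4PY_4=Y_4$ and $PY_4=Y_4P$, so $Y_4=P^{\scalebox{0.5}{\#}}$.
\end{itemize}
Zero divisors never interfere, because only invertible factors are cancelled.

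Second, you establish $\operatorname{Ind}(P)=1$ rather than the stated rank equality, so add the bridge. Since $(P^2)^{\scalebox{0.5}{\#}}=(P^{\scalebox{0.5}{\#}})^2$, both $P$ and $P^2$ have index one. Theorem~\ref{J1} with Lemma~\ref{TJ1}$(e)$ then yields $\operatorname{Rank}(P)=\operatorname{ARank}(P)=\operatorname{Rank}(P_s)=\operatorname{Rank}(P_s^2)=\operatorname{ARank}(P^2)=\operatorname{Rank}(P^2)$. For sufficiency, $\operatorname{Ind}(\hat{B})=1$ is already a hypothesis, so only your two block identities are needed.
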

\begin{proof}
In light of  (\ref{FAN}), let $$\hat{B} =\hat{U}\begin{bmatrix}
X_1 & X_2\\
X_3 & X_4
\end{bmatrix}\hat{U}^*.$$
Then, the part $(a)$ of Definition \ref{DDD} implies that $\hat{B}$  takes the form given in (\ref{DDD1}).
Furthermore, since  $\hat{U} \in \mathbb{DC}_n^U$ and both $K$ and \(\Sigma_1\) are invertible, we have
\begin{align*}
  \operatorname{Rank}(\hat{B})&=\operatorname{Rank}\left( \hat{U}\begin{bmatrix}
\Sigma_1 K & \Sigma_1 L\\
0 & P
\end{bmatrix}\hat{U}^*\right)=\operatorname{Rank}\left( \begin{bmatrix}\Sigma_1 K & 0\\
0 & P
\end{bmatrix}\right)= \operatorname{Rank}(\Sigma_1 K) +\operatorname{Rank}(P), \\
\operatorname{Rank}(\hat{B}^2)&=\operatorname{Rank}\left( \hat{U}\begin{bmatrix}
(\Sigma_1 K)^2 & \Sigma_1 K\Sigma_1 L+\Sigma_1 LP\\
0 & P^2
\end{bmatrix}\hat{U}^*\right)= \operatorname{Rank}\left( \begin{bmatrix}(\Sigma_1 K)^2 & 0\\
0 & P^2
\end{bmatrix}\right) \\
&=\operatorname{Rank}((\Sigma_1 K)^2) +\operatorname{Rank}(P^2) =\operatorname{Rank}(\Sigma_1 K) +\operatorname{Rank}(P^2).
\end{align*}
In turn, it follows from  $\operatorname{Rank}(\hat{B}^2)=\operatorname{Rank}(\hat{B})$  that $ \operatorname{Rank}(P^2)=\operatorname{Rank}(P)$.
\end{proof}

\noindent For example, Theorem 5 in (Wang et al. \citeyear{Wang5}) was originally proved by means of the Core-EP decomposition. 
In the following, we attempt to give a new proof method based on the D-H-S decomposition.

\begin{theorem}\emph{(Wang et al. \citeyear{Wang5})}\label{4.20}
  Let $\hat{A}, \hat{B} \in \mathbb{DC}_n^{CM}$. 
  Then $\hat{A} \overset{D-\tiny\textcircled{\#}}{\leq} \hat{B} \Rightarrow \hat{A} \overset{D}{\leq} \hat{B}$.
\end{theorem}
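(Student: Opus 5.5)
The plan is to use Theorem~\ref{4.19} to put both matrices in a common block form, and then check the three requirements of Definition~\ref{DDD}(b) one at a time.

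\emph{Setting up the block form.} Assume $\hat{A} \overset{D-\tiny\textcircled{\#}}{\leq} \hat{B}$. By Remark~\ref{ANA} and Theorem~\ref{4.19} there is $\hat{U}\in\mathbb{DC}_n^U$ with $\hat{A}$ and $\hat{B}$ as in (\ref{DDD1}). Theorem~\ref{J1} then gives that $K$ is invertible, and $\hat{B}-\hat{A}=\hat{U}\begin{bmatrix}0&0\\0&P\end{bmatrix}\hat{U}^*$.

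\emph{Existence of the three DMPGIs.} The inverses $\hat{A}^\dagger$ and $\hat{B}^\dagger$ exist by Theorem~\ref{J1}, because $\operatorname{Ind}(\hat{A})=\operatorname{Ind}(\hat{B})=1$. For $\hat{B}-\hat{A}$, the relevant fact is that a block-diagonal unitary conjugate has a DMPGI exactly when each diagonal block does. So it suffices to show $P^\dagger$ exists.

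We argue this from $\hat{B}\in\mathbb{DC}_n^{\mathrm{CM}}$. By Theorem~\ref{J1}, $\hat{B}^{\scalebox{0.5}{\#}}$ exists. Write $\hat{B}^{\scalebox{0.5}{\#}}=\hat{U}\begin{bmatrix}Y_1&Y_2\\Y_3&Y_4\end{bmatrix}\hat{U}^*$, where $\hat{B}$ is block upper triangular with invertible $(1,1)$ block $\Sigma_1K$. From the equations (\ref{E3}) we check that $Y_3=0$ and that $Y_4$ satisfies the group-inverse equations for $P$. Hence $P^{\scalebox{0.5}{\#}}$ exists, $\operatorname{Ind}(P)=1$, and Theorem~\ref{J1} applied to $P$ gives that $P^\dagger$ exists.

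Concretely, comparing the $(2,1)$ blocks in $\hat{B}\hat{Y}=\hat{Y}\hat{B}$ gives $0=Y_3\Sigma_1K$, so $Y_3=0$. With $Y_3=0$, the $(2,2)$ blocks of the three equations read $PY_4P=P$, $Y_4PY_4=Y_4$ and $PY_4=Y_4P$. Thus $(\hat{B}-\hat{A})^\dagger=\hat{U}\begin{bmatrix}0&0\\0&P^\dagger\end{bmatrix}\hat{U}^*$, which one verifies directly against (\ref{AN}).

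\emph{Comparing standard parts.} Write $\hat{U}=U_s+\epsilon U_d$, $K=K_1+\epsilon K_2$ and similarly for the other blocks. The standard parts are
\[
A_s=U_s\begin{bmatrix}\Sigma_{\mathrm{1s}}K_1&\Sigma_{\mathrm{1s}}L_1\\0&0\end{bmatrix}U_s^*,\qquad B_s=U_s\begin{bmatrix}\Sigma_{\mathrm{1s}}K_1&\Sigma_{\mathrm{1s}}L_1\\0&P_s\end{bmatrix}U_s^*,
\]
with $U_s$ unitary and $K_1$ invertible by Theorem~\ref{J1}. In the complex setting this is exactly the Hartwig--Spindelb\"ock form of the core partial order, and the core order implies the minus order. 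Alternatively, we verify the minus order directly via $\operatorname{rank}(B_s-A_s)=\operatorname{rank}(P_s)$, $\operatorname{rank}(A_s)=r$, and $\operatorname{rank}(B_s)=r+\operatorname{rank}(P_s)$. The last equality holds because $\Sigma_{\mathrm{1s}}K_1$ is invertible, so we can eliminate the $(1,2)$ block as in the proof of Theorem~\ref{4.19}. Rank additivity then gives $A_s\leq^- B_s$.

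\emph{Expected obstacle.} The delicate step is establishing the existence of $P^\dagger$ (equivalently $\operatorname{Ind}(P)=1$ in the dual sense). The condition $\operatorname{Rank}(P^2)=\operatorname{Rank}(P)$ from Theorem~\ref{4.19} involves dual rank, and dual rank alone may not control the appreciable part. I would therefore rely on the group-inverse block argument above rather than on the rank condition.
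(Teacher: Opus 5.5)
Your argument is correct apart from one slip, described below. It reaches the key step by a different route from the paper.

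\paragraph{Comparison with the paper.} The paper takes the condition $\operatorname{Rank}(P^2)=\operatorname{Rank}(P)$ supplied by Theorem~\ref{4.19} and treats it as $\operatorname{Ind}(P)=1$. It then invokes Theorem~\ref{J1} and Lemma~\ref{TJ1} to get $(I-P_sP_s^\dagger)P_d(I-P_s^\dagger P_s)=0$, and from this checks Lemma~\ref{TJ1}(b) for $\hat{B}-\hat{A}$ blockwise. The standard-part order $A_s\le B_s$ is only asserted to be ``readily seen'' from (\ref{DDD1}).

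You instead do three things:
\begin{itemize}
\item you obtain $P^{\#}$ directly from $\hat{B}^{\#}$ by block comparison;
\item you write down $(\hat{B}-\hat{A})^\dagger=\hat{U}\,\mathrm{diag}(0,P^\dagger)\,\hat{U}^*$ explicitly and check it against (\ref{AN});
\item you prove $A_s\le^- B_s$ by rank additivity.
\end{itemize}

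Your suspicion about the dual-rank condition is justified. Take $P=\begin{bmatrix}0&1\\ \epsilon&0\end{bmatrix}=I_2\,\mathrm{diag}(1,\epsilon)\begin{bmatrix}0&1\\ 1&0\end{bmatrix}$. Then $P^2=\epsilon I_2$, so $\operatorname{Rank}(P^2)=\operatorname{Rank}(P)=2$. Yet $\operatorname{ARank}(P)=1$, so $P^\dagger$ does not exist by Lemma~\ref{TJ1}(e), and $\operatorname{Ind}(P)\neq 1$.

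So the paper's inference from the dual-rank equality alone is not valid. The theorem survives because $P$ is the corner block of an index-one $\hat{B}$, which is exactly the information your block argument uses. The paper's route is shorter; yours is the one that actually closes this step.

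\paragraph{The slip.} The error is in your derivation of $Y_3=0$. The $(2,1)$ block of $\hat{B}\hat{Y}$ is $PY_3$, not $0$. So $\hat{B}\hat{Y}=\hat{Y}\hat{B}$ only gives $PY_3=Y_3\Sigma_1K$.

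To fix it, add the $(2,1)$ block of $\hat{B}\hat{Y}\hat{B}=\hat{B}$, which reads $PY_3\Sigma_1K=0$. Since $\Sigma_1K$ is invertible, this gives $PY_3=0$, hence $Y_3\Sigma_1K=0$ and so $Y_3=0$.

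With that fixed, your $(2,2)$-block identities $PY_4P=P$, $Y_4PY_4=Y_4$ and $PY_4=Y_4P$ are correct. The rest of your proof then goes through as written.
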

\begin{proof}
  By Theorem~\ref{4.19},  $\hat{A} \overset{D-\tiny\textcircled{\#}}{\leq} \hat{B}$ if and only if (\ref{DDD1}) and $ \operatorname{Rank}(P^2)=\operatorname{Rank}(P)$ hold.
  Let $P=P_s+\epsilon P_d$, where $P_s, P_d \in \mathbb{C}^{(n-r) \times (n-r)}$.
  Then, combining Lemma~\ref{TJ1} and Theorem~\ref{J1}, we obtain
  $$ (I_m - P_sP_s^\dagger)P_d(I_n - P_s^\dagger P_s) = 0.$$
It is readily seen from (\ref{DDD1}) that $A_s \leq B_s$, and then suppose
\begin{align*}
 M= \hat{B}-\hat{A}= \hat{U}\begin{bmatrix}
0 & 0\\
0 & P
\end{bmatrix}\hat{U}^* = \hat{U}\left( \begin{bmatrix}
0 & 0\\
0 & P_s
\end{bmatrix}+\epsilon\begin{bmatrix}
0 & 0\\
0 & P_d
\end{bmatrix} \right)\hat{U}^* .
\end{align*}
Thus,  under the condition 
\begin{align*}
  & \ \ \ \ \ (I_m - M_sM_s^\dagger)M_d(I_n - M_s^\dagger M_s)\\ 
  &= 
 \hat{U} \left(
    \begin{bmatrix}
I_r & 0\\
0 & I_{m-r}
\end{bmatrix}-\begin{bmatrix}
0 & 0\\
0 & P_s
\end{bmatrix}\begin{bmatrix}
0 & 0\\
0 & P_s^{\dagger}
\end{bmatrix}
 \right)\begin{bmatrix}
0 & 0\\
0 & P_d
\end{bmatrix}
 \left(
    \begin{bmatrix}
I_r & 0\\
0 & I_{n-r}
\end{bmatrix}-\begin{bmatrix}
0 & 0\\
0 & P_s^{\dagger}
\end{bmatrix}\begin{bmatrix}
0 & 0\\
0 & P_s
\end{bmatrix}
\right)\hat{U}^*\\
&= \hat{U} \left(
\begin{bmatrix}
0 & 0\\
0 & (I_m - P_sP_s^\dagger)P_d(I_n - P_s^\dagger P_s) 
\end{bmatrix}\right)\hat{U}^* =0,
\end{align*}
it follows from Lemma~\ref{TJ1} that $M^{\dagger}$ exists, i.e., $(\hat{B} - \hat{A})^{\dagger}$ exists.
  Furthermore, Theorem~\ref{J1} ensures the existence of $\hat{A}^{\dagger},\hat{B}^{\dagger}$ whenever $\hat{A}, \hat{B} \in \mathbb{DC}_n^{CM}$. 
Then, by Definition \ref{DDD} $(b)$, we have $\hat{A} \overset{D}{\leq} \hat{B}$.
\end{proof}

\noindent From the derivation process of Theorem~\ref{4.19} and Theorem~\ref{4.20}, it can be observed that the calculation through the D-H-S decomposition is more concise and efficient in this case than that through the dual Core-EP decomposition adopted in (Gao et al.\citeyear{Gao1}; Sitha and Mosi{\'c} \citeyear{Sitha1}).
Thus, the D-H-S decomposition provides a practical tool for the study of dual partial orders, and the choice of a suitable decomposition method may be determined by the specific content to be investigated.

\section{Conclusion}\label{5}
This paper introduces two new forms of the D-H-S decomposition and uses them to derive explicit expressions for four types of dual generalized inverses. 
Building on these expressions, we analyze the relationships among these inverses, and further apply the decomposition to investigate composite generalized inverses of dual matrices. 
Moreover, we also verify the applicability of the D-H-S decomposition in the study of dual partial orders.
Overall, D-H-S decomposition provides a concise and effective tool for studying dual generalized inverses and their properties.
The following topics are proposed for further research:
\begin{itemize}
\item Investigate the properties of some special matrix classes using the D-H-S decomposition;
\item Employ the D-H-S decomposition for a further investigation into the generalized inverses of dual matrices.
\end{itemize}
\backmatter

\section*{Declarations}
\textbf{Conflict of interest} \ \  There are no relevant financial or non-financial competing interests.

\bibliography{sn-bibliography}
\end{document}